\newcommand{\al}{\alpha}
\newcommand{\be}{\beta}
\newcommand{\ga}{\gamma}
\newcommand{\de}{\delta}
\newcommand{\la}{\lambda}
\newcommand{\om}{\omega}
\theoremstyle{plain}
\numberwithin{equation}{section}
\newtheorem{thm}{Theorem}[section]
\newtheorem{lem}[thm]{Lemma}
\newtheorem{prop}[thm]{Proposition}
\newtheorem{cor}[thm]{Corollary}
\theoremstyle{definition}
\newtheorem{alg}[thm]{Algorithm}
\newtheorem{ip}[thm]{Inverse Problem}
\theoremstyle{remark}
\newtheorem{remark}[thm]{Remark}
\DeclareMathOperator*{\Res}{Res}
\begin{document}

\begin{center}
{\Large\bf McLaughlin's inverse problem \\[0.2cm] for the fourth-order differential operator}
\\[0.2cm]
{\bf Natalia P. Bondarenko} \\[0.2cm]
\end{center}

\vspace{0.5cm}

{\bf Abstract.} In this paper, we revisit McLaughlin's inverse problem, which consists in the recovery of the fourth-order differential operator from the eigenvalues and two sequences of weight numbers. We for the first time prove the uniqueness for solution of this problem. Moreover, we obtain the interpretation of McLaughlin's problem in the framework of the general inverse problem theory by Yurko for differential operators of arbitrary orders. An advantage of our approach is that it requires neither smoothness of the coefficients nor self-adjointness of the operator. In addition, we establish the connection between McLaughlin's problem and Barcilon's three-spectra inverse problem.

\medskip

{\bf Keywords:} inverse spectral problems; fourth-order differential operator; Weyl-Yurko matrix; method of spectral mappings; uniqueness

\medskip

{\bf AMS Mathematics Subject Classification (2010):} 34A55 34B09 34L05 47E05 

\vspace{1cm}

\section{Introduction} \label{sec:intr}

In this paper, we consider the following boundary value problem $\mathcal L = \mathcal L(p, q, a, b, c)$:
\begin{gather} \label{eqv}
\ell(y) := y^{(4)} - (p(x) y')' + q(x) y = \la y, \quad x \in (0,1), \\ \label{bc}
\left.\begin{array}{c}
U_1(y) := y''(0) + ay'(0) - b y(0) = 0, \\
U_2(y) := y^{[3]}(0) + b y'(0) + c y(0) = 0, \\
y(1) = y'(1) = 0,
\end{array}\qquad \right\}
\end{gather}
where $p$ and $q$ are complex-valued functions of $L_1[0,1]$, $\la$ is the spectral parameter, $a, b, c$ are complex constants, $y^{[3]} := y''' - p y'$ is the quasi-derivative, and a function $y$ belongs to the domain
$$
\mathcal D := \{ y \in W_1^3[0,1] \colon y^{[3]} \in W_1^1[0,1] \}.
$$

Suppose that the eigenvalues $\{ \la_n \}_{n \ge 1}$ of the problem $\mathcal L$ are simple and denote by $\{ y_n(x) \}_{n \ge 1}$ the corresponding eigenfunctions normalized by the condition $\int_0^1 y_n^2(x) \, dx = 1$. Put $\ga_n := y_n(0)$, $\xi_n := y_n'(0)$. This paper is concerned with the following inverse spectral problem.

\begin{ip} \label{ip:main}
Given the spectral data $\{ \la_n, \ga_n, \xi_n \}_{n \ge 1}$, find the coefficients $p, q, a, b, c$ of the problem $\mathcal L$.
\end{ip}

Inverse Problem~\ref{ip:main} has been introduced by McLaughlin \cite{McL82} as an attempt to transfer the classical results regarding the second-order inverse Sturm-Lioville problems to the order four. Indeed, it is well known that a real-valued potential $q(x)$ of the Sturm-Liouville problem 
\begin{equation} \label{StL}
-y'' + q(x) y = \la y, \quad x \in (0,1), \quad y(0) = y(1) = 0
\end{equation}
is uniquely specified by the eigenvalues $\{ \la_n \}_{n \ge 1}$ of \eqref{StL} and the weight numbers $\al_n := y_n'(0)$, $n \ge 1$, where $\{ y_n(x) \}_{n \ge 1}$ are the eigenfunctions normalized by the condition $\int_0^1 y_n^2(x) \, dx = 1$ (see, e.g., \cite{Mar77, Lev84, PT87, FY01}). The most complete results for the inverse Sturm-Liouville inverse problems have been obtained by the method of Gelfand and Levitan \cite{GL51} based on transformation operators. However, that method turned out to be ineffective for higher-order differential operators.

McLaughlin has considered the problem \eqref{eqv}--\eqref{bc} under the assumptions $p \in C^3[0,1]$, $q \in C^1[0,1]$ in \cite{McL82, McL86} and $p \in C^1[0,1]$, $q \in C[0,1]$ in \cite{McL84}. Furthermore, she assumed that all the coefficients $p(x)$, $q(x)$, $a$, $b$, and $c$ were real, so the problem $\mathcal L$ was self-adjoint. McLaughlin has proved some solvability theorems for Inverse Problem~\ref{ip:main} under the condition of existence of the transformation operator. This condition is very restrictive. The existence of transformation operators for higher-order differential equations and applications to inverse problem theory have been intensively investigated by Sakhnovich \cite{Sakh61, Sakh62}, Khachatryan \cite{Khach83}, Malamud \cite{Mal82, Mal90} and other mathematicians. Their results show that transformation operators exist only under some requirements of analyticity for differential equation coefficients. However, higher-order inverse spectral problems with piecewise-analytical coefficients can be effectively solved by the method of standard models (see \cite{Yur89, Yur00}). Anyway, although the studies of McLaughlin \cite{McL82, McL84, McL86} did not imply fundamental theoretical results for Inverse Problem~\ref{ip:main}, they were useful for the development of numerical methods (see, e.g., \cite{Glad86, Glad05}). A variety of inverse spectral problems for the fourth-order differential operators in other statements were considered in \cite{Bar74-1, Bar74-2, Iw88, PK97, CPS98, Pap04, Mor15, BK15, AZ19, PB20, JLX22}.

Fourth-order linear eigenvalue problems arise in various applications. First of all, the transverse vibrations of a beam are described by the Euler-Bernoulli equation (see \cite{Glad05}):
$$
(A(x) u''(x))'' = \la B(x) u(x),
$$
which can be transformed to equation \eqref{eqv}. Barcilon \cite{Bar74-1} and McLaughlin \cite{McL86} investigated the fourth-order inverse spectral problems in connection with geophysics. Furthermore, fourth-order linear differential operators arise in mechanics, optics, and acoustics (see \cite{Mikh64, YS72, Pol20}).

In this paper, we revisit McLaughlin's problem and consider it in a more general case: with non-smooth coefficients $p, q \in L_1[0,1]$ and without the requirement of the self-adjointness. Our main goal is to prove the uniqueness for the inverse problem solution. Note that McLaughlin \cite{McL82, McL84, McL86} did not study this issue, since she considered the problem in a very special case of existence of the transformation operator. Thus, to the best of the author's knowledge, the question of uniqueness for the recovery of the problem $\mathcal L$ from the spectral data $\{ \la_n, \ga_n, \xi_n \}_{n \ge 1}$ was open. In the present paper, the author not only aims to fill this gap but also to interpret McLaughlin's problem in the framework of the general inverse problem theory that has been created by Yurko \cite{Yur00, Yur02, Yur92, Yur93, Yur95} for higher-order differential equations 
\begin{equation} \label{ho}
y^{(n)} + \sum_{k = 0}^{n-2} p_k(x) y^{(k)} = \la y, \quad n \ge 2.
\end{equation}

Yurko has found such spectral data that uniquely specify the coefficients $\{ p_k(x) \}_{k = 0}^{n-2}$ of equation \eqref{ho} for any integer order $n \ge 2$ on a finite interval and on the half-line independently of the behavior of the spectra (see \cite{Yur92, Yur02}). These spectral data are an $(n \times n)$ meromorphic matrix function $M(\la) = [m_{jk}(\la)]_{j,k = 1}^n$, which is called \textit{the Weyl-Yurko matrix}. Moreover, Yurko has developed \textit{the method of spectral mappings}, which allowed him not only to solve constructively the inverse spectral problems for equation \eqref{ho} but also to obtain the necessary and sufficient conditions for their solvability. In the case when the poles of the Weyl-Yurko matrix are simple, it is convenient to reconstruct the coefficients $\{ p_k \}_{k = 0}^{n-2}$ on a finite interval from the poles $\Lambda$ and the weight matrices $\{ \mathcal N(\la_0) \}_{\la_0 \in \Lambda}$, which generalize the weight numbers $\{ \al_n \}_{n \ge 1}$ for the Sturm-Liouville equation \eqref{StL}.
However, the studies of Yurko are limited to \eqref{ho} either with sufficiently smooth coefficients $p_k \in W_1^k[0,1]$, $k = \overline{0,n-2}$ (see \cite{Yur92}), or  with Bessel-type singularities (see \cite{Yur93, Yur95}). Rigorously speaking, the results of Yurko cannot be directly applied to equation \eqref{eqv}  with $p, q \in L_1[0,1]$.
Therefore, in this paper, we apply the recent results of Bondarenko \cite{Bond21, Bond22-alg, Bond23-mmas, Bond23-reg}. In those studies, the inverse problem theory has been transferred to higher-order differential operators with distribution coefficients of the Mirzoev-Shkalikov class (see \cite{MS16}). In particular, that class contains equation \eqref{eqv} with $p \in W_2^{-1}[0,1]$ and $q \in W_2^{-2}[0,1]$.

Thus, in this paper, we construct the Weyl-Yurko matrix $M(\la)$ and the discrete spectral data $\{ \la_0, \mathcal N(\la_0) \}_{\la_0 \in \Lambda}$ due to the studies of Bondarenko \cite{Bond21, Bond22-alg, Bond23-mmas, Bond23-reg}. Then, we establish relations between these universal spectral characteristics and McLaughlin's spectra data $\{ \la_n, \ga_n, \xi_n \}_{n \ge 1}$.
This implies the uniqueness for solution of Inverse Problem~\ref{ip:main}. In addition, we compare McLaughlin's problem with the inverse problem of Barcilon \cite{Bar74-1, Bar74-2}, which consists in the recovery of the fourth-order differential operator from three spectra.
Our investigation requires a comprehensive analysis of various spectral characteristics for equation \eqref{eqv} and consideration of several cases of their behavior. An important role in our analysis is played by some symmetries of the Weyl-Yurko matrix, which follow from the special formally self-adjoint structure of the problem $\mathcal L$. 
Since the method of spectral mappings is constructive, in the future, our results can be applied to the development of new reconstruction techniques and to obtaining the necessary and sufficient conditions for the solvability of McLaughlin's inverse problem.

The paper is organized as follows. In Section~\ref{sec:main}, we formulate the main results and explain the proof strategy. Section~\ref{sec:prelim} contains preliminaries and several auxiliary lemmas. In Section~\ref{sec:gen}, we prove the uniqueness of recovering the problem $\mathcal L$ from particular elements of the Weyl-Yurko matrix and from the discrete spectral data under some additional conditions. Sections~\ref{sec:uniq1} and~\ref{sec:uniq2} are devoted to the proofs of the uniqueness theorems for McLaughlin's Inverse Problem~\ref{ip:main}. In Section~\ref{sec:Bar}, we establish the connection between McLaughlin's and Barcilon's inverse problems.  

Throughout the paper, we use the following notations:
\begin{enumerate}
\item The prime $y'(x, \la)$ denotes the differentiation with respect to $x$ and the dot $\dot y(x, \la)$, with respect to $\la$.
\item $\de_{jk} = \begin{cases} 1, \quad j = k, \\ 0, \quad j \ne k\end{cases}$ is the Kronecker delta.
\item The notation $\{ A_{\langle k \rangle}(\la_0) \}$ is used for the coefficients of the Laurent series of a function $A(\la)$ at the point $\la_0$:
$$
A(\la) = \sum_{k = -\infty}^{\infty} A_{\langle k \rangle}(\la_0) (\la - \la_0).
$$
In particular, $A_{\langle -1 \rangle}(\la_0) = \Res_{\la = \la_0} A(\la)$.
\item In the proofs of the uniqueness theorems, along with the problem $\mathcal L = \mathcal L(p,q,a,b,c)$, we consider another problem $\tilde {\mathcal L} = \mathcal L(\tilde p, \tilde q, \tilde a, \tilde b, \tilde c)$ of the same form but with different coefficients. We agree that, if a symbol $\al$ denotes an object related to $\mathcal L$, then the symbol $\tilde \al$ denotes the analogous object related to $\tilde {\mathcal L}$.
\end{enumerate}

\section{Main results and proof strategy} \label{sec:main}

Along with $U_1$ and $U_2$ in \eqref{bc}, define the linear forms
\begin{equation} \label{defUV}
U_3(y) = y(0), \quad U_4(y) = y'(0), \quad V_s(y) = y^{[s-1]}(1), \quad s = \overline{1,4}, 
\end{equation}
where $y^{[j]} := y^{(j)}$ for $j = 0, 1, 2$ and $y^{[3]} := y''' - p y$.
Then, the boundary conditions \eqref{bc} can be rewritten as
\begin{equation} \label{bc2}
U_1(y) = U_2(y) = 0, \quad V_1(y) = V_2(y) = 0.
\end{equation}

Denote by $\{ C_k(x, \la) \}_{k = 1}^4$ and $\{ \Phi_k(x, \la) \}_{k = 1}^4$ the solutions of equation \eqref{eqv} satisfying the initial conditions
\begin{equation} \label{initC}
U_s(C_k) = \de_{sk}, \quad s = \overline{1,4}, 
\end{equation}
and the boundary conditions
\begin{equation} \label{bcPhi}
U_s(\Phi_k) = \de_{sk}, \quad s = \overline{1,k}, \qquad 
V_j(\Phi_k) = 0, \quad j = \overline{1,4-k},
\end{equation}
respectively. Clearly, for each fixed $x \in [0,1]$, the functions $C_k^{[j]}(x, \la)$ are entire in $\la$ and $\Phi_k^{[j]}(x, \la)$ are meromorphic in $\la$ for $k = \overline{1,4}$, $j = \overline{0,3}$. Furthermore, $\{ C_k(x, \la) \}_{k = 1}^4$ is a fundamental system of solutions of equation \eqref{eqv}, so the solutions $\{ \Phi_k(x, \la) \}_{k = 1}^4$ can be expanded over this system. Therefore, in terms of the matrix functions $C(x, \la) := [C_k^{[j-1]}(x, \la)]_{j,k = 1}^4$, $\Phi(x, \la) = [\Phi_k^{[j-1]}(x, \la)]_{j,k = 1}^4$, we have the relation
\begin{equation} \label{relM}
\Phi(x, \la) = C(x, \la) M(\la),
\end{equation}
where $M(\la) = [m_{jk}(\la)]_{j,k = 1}^4$ is a meromorphic matrix function, which is called \textit{the Weyl-Yurko matrix}. Theorem~5.3 in \cite{Bond23-mmas} implies the following proposition for the problem $\mathcal L$.

\begin{prop}[\cite{Bond23-mmas}] \label{prop:uniqM}
The Weyl-Yurko matrix $M(\la)$ uniquely specifies the coefficients $p, q, a, b, c$.
\end{prop}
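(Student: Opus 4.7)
The plan is to carry out the standard uniqueness argument of Yurko's method of spectral mappings, adapted to the fourth-order case with coefficients in $L_1[0,1]$. Assume two boundary value problems $\mathcal L = \mathcal L(p,q,a,b,c)$ and $\tilde{\mathcal L} = \mathcal L(\tilde p, \tilde q, \tilde a, \tilde b, \tilde c)$ share the same Weyl-Yurko matrix, $M(\la)\equiv \tilde M(\la)$; the goal is to deduce that all five coefficients agree.

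First I would introduce the matrix of spectral mappings
$$P(x,\la) := \Phi(x,\la)\,[\tilde\Phi(x,\la)]^{-1}.$$
By \eqref{relM} and the hypothesis $M = \tilde M$, this matrix admits the alternative representation $P(x,\la) = C(x,\la)\,[\tilde C(x,\la)]^{-1}$. The latter shows that $P(\cdot,\la)$ is entire in $\la$ for each fixed $x\in[0,1]$, because $C$ and $\tilde C$ have entire entries while $\det \tilde C(x,\la)$ is a nonzero constant (the Wronskian is conserved since the companion matrix of \eqref{eqv} is trace-free, and the initial-value transformation $(y,y',y'',y^{[3]})(0)\mapsto(U_1,\ldots,U_4)(y)$ is invertible). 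The first representation, on the other hand, will yield a uniform bound at infinity.

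Next I would run the asymptotic analysis. In each Stokes sector of the $\rho$-plane, where $\rho^4=\la$, the triangular boundary conditions \eqref{bcPhi} characterise $\Phi_k(x,\la)$ as the Weyl-type solution with the sharpest decay in that sector, and likewise for $\tilde\Phi_k$. Using the Birkhoff-type asymptotics of fundamental systems for equation \eqref{eqv} with $p,q\in L_1[0,1]$ developed in \cite{Bond21,Bond23-reg,Bond23-mmas}, one obtains, uniformly for $x\in[0,1]$ and along rays avoiding the common poles of $\Phi$ and $\tilde\Phi$,
$$\Phi(x,\la)\,[\tilde\Phi(x,\la)]^{-1} \to I \quad \text{as } |\la|\to\infty.$$
Entirety combined with this uniform bound, via Liouville's theorem, forces $P(x,\la)\equiv I$, and hence $C(x,\la)\equiv \tilde C(x,\la)$ and $\Phi(x,\la)\equiv \tilde\Phi(x,\la)$ for all admissible $x$ and $\la$. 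Finally, since $C_1,\ldots,C_4$ form a fundamental system solving \eqref{eqv} simultaneously with coefficients $(p,q)$ and with $(\tilde p,\tilde q)$, one reads off $p = \tilde p$ and $q = \tilde q$ a.e.\ on $(0,1)$; the initial conditions \eqref{initC} together with the definitions of $U_1,U_2$ then force $a = \tilde a$, $b = \tilde b$, $c = \tilde c$.

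The hard part is the asymptotic step. Sharp sector-wise estimates for four independent fundamental solutions, together with the verification that \eqref{bcPhi} uniquely selects the fastest-decaying one in each sector, are technically delicate when only $p,q\in L_1[0,1]$, and the remainder terms demand careful integral-equation arguments. These estimates constitute the bulk of \cite{Bond23-mmas}, which is exactly why the present paper invokes Theorem~5.3 of that work as a black box rather than reproving it here.
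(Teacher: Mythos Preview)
Your sketch is correct and follows the standard spectral-mappings route that the paper itself relies on. The paper does not prove Proposition~\ref{prop:uniqM} at all; it simply cites Theorem~5.3 of \cite{Bond23-mmas}. Your outline---defining $P(x,\la)=\Phi\tilde\Phi^{-1}=C\tilde C^{-1}$, establishing that $P$ is entire in $\la$, using Birkhoff-type asymptotics to force $P\to I$, applying Liouville, and then reading off $p,q$ and the boundary constants---is exactly this argument, and you correctly identify the asymptotic step as the technical core handled in \cite{Bond23-mmas}. The only minor stylistic difference is that in the closely related proof of Theorem~\ref{thm:sd} the paper recovers $a,b,c$ from the Weyl solutions $\Phi_3,\Phi_4$ at $x=0$ rather than from the initial data of the $C_k$; both readings are equivalent once $\Phi\equiv\tilde\Phi$ (equivalently $C\equiv\tilde C$) is known.
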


Let us consider the properties of the Weyl-Yurko matrix. Using \eqref{initC}, \eqref{bcPhi}, and \eqref{relM}, one can easily show that $M(\la)$ is a unit lower-triangular matrix:
\begin{equation} \label{matrM}
M(\la) = \begin{bmatrix}
            1 & 0 & 0 & 0 \\
            m_{21}(\la) & 1 & 0 &  0 \\
            m_{31}(\la) & m_{32}(\la) & 1 & 0 \\
            m_{41}(\la) & m_{42}(\la) & m_{43}(\la) & 1
        \end{bmatrix}.
\end{equation}
Moreover, its elements can be represented in the form
\begin{equation} \label{mjk}
m_{jk}(\lambda)=-\frac{\Delta_{jk}(\lambda)}{\Delta_{kk}(\lambda)}, \quad 1\leq k \;\textless j \leq 4,
\end{equation}
where $\Delta_{kk}(\lambda):=\det[V_{5-s}(C_{r})]_{s,r=k+1}^{4}$ and $\Delta_{jk}(\lambda)$ is obtained from $\Delta_{kk}(\lambda)$ by replacing $C_{j}$ by $C_{k}$. Clearly, the functions $\Delta_{jk}(\la)$, $1 \le k \le j \le 4$, are entire in $\la$, so $m_{jk}(\la)$ are meromorphic in $\la$. On the other hand, the zeros of the functions $\Delta_{jk}(\la)$ coincide with the eigenvalues of the corresponding boundary value problems $\mathcal L_{jk}$ for equation \eqref{eqv} with the boundary conditions
\begin{equation} \label{bcjk}
U_{\xi}(y)=0, \quad \xi=\overline{1,k-1},j, \quad V_{\eta}(y)=0, \quad \eta=\overline{1, 4-k}.
\end{equation}
In particular, $\mathcal L_{22} = \mathcal L$, so the zeros of $\Delta_{22}(\la)$ are $\{ \la_n \}_{n \ge 1}$.

Introduce the so-called \textit{separation condition}:

\smallskip

$(\mathcal S)$: For $k = 1, 2$, the functions $\Delta_{kk}(\la)$ and $\Delta_{k+1,k+1}(\la)$ do not have common zeros.

\smallskip

It will be shown in Lemma~\ref{lem:aux1} that $\Delta_{11}(\la) \equiv -\Delta_{33}(\la)$. Thus, it is sufficient to replace $k = 1, 2$ by $k = 1$ in $(\mathcal S)$. Under the separation condition, a less amount of information than the whole Weyl-Yurko matrix can be used to uniquely determine the problem $\mathcal L$. 

\begin{thm} \label{thm:Leib}
Under the condition $(\mathcal S)$, the functions $m_{21}(\la)$, $m_{32}(\la)$, and $m_{43}(\la)$ uniquely specify the coefficients $p,q,a,b,c$ of the problem $\mathcal L$.
\end{thm}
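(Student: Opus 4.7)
The plan is to exploit the formally self-adjoint form of $\ell$ to obtain algebraic identities among the entries of the Weyl--Yurko matrix $M(\la)$, and then to use the separation condition $(\mathcal S)$ to recover the unknown entries $m_{31}, m_{41}, m_{42}$ from the given $m_{21}, m_{32}, m_{43}$; Proposition~\ref{prop:uniqM} then yields the theorem. As is standard in the method of spectral mappings, I consider an auxiliary problem $\tilde{\mathcal L}$ also satisfying $(\mathcal S)$ and with the same three sub-diagonal entries, and reduce the statement to proving $M(\la) \equiv \tilde M(\la)$.

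The key structural input is the Lagrange identity. Since $\ell$ is formally self-adjoint, the bilinear concomitant
\[
\langle y, z\rangle := z\,y^{[3]} - z'\,y'' + z''\,y' - y\,z^{[3]}
\]
satisfies $\frac{d}{dx}\langle y, z\rangle = z\,\ell(y) - y\,\ell(z)$. Applied to $y = C_j(\cdot,\la)$ and $z = C_k(\cdot,\la)$, which both satisfy $\ell(y) = \la y$, this shows that $\langle C_j, C_k\rangle(x,\la)$ is constant in $x$. Evaluating at $x=0$ via the initial conditions \eqref{initC} produces an explicit numerical antisymmetric matrix $K$ with $\langle C_j, C_k\rangle \equiv K_{jk}$; evaluating at $x=1$ re-expresses the same quantity through the boundary forms $V_s(C_r)$. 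One therefore obtains the symplectic-type identity
\[
W(\la)^T K\, W(\la) = K, \qquad W(\la) := [V_s(C_r)(\la)]_{s,r=1}^{4},
\]
holding identically in $\la$. Since each $\Delta_{jk}(\la)$ is a signed minor of $W(\la)$, this matrix identity induces a finite system of quadratic algebraic relations among the $\Delta_{jk}$. Dividing by the appropriate principal minors $\Delta_{kk}$ converts these into rational identities among the entries $m_{jk} = -\Delta_{jk}/\Delta_{kk}$, from which I aim to extract explicit formulas expressing $m_{31}, m_{41}, m_{42}$ as rational functions of $m_{21}, m_{32}, m_{43}$.

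The role of $(\mathcal S)$ is to globalize these rational identities. By Lemma~\ref{lem:aux1} one has $\Delta_{11} \equiv -\Delta_{33}$, and $(\mathcal S)$ then ensures that $\Delta_{11}, \Delta_{22}$ and $\Delta_{22}, \Delta_{33}$ have pairwise disjoint zero sets. Hence the denominators produced after division cannot coincide with zeros of relevant numerators in a way that hides information, and the formally derived relations extend to global meromorphic equalities. This forces $m_{31} \equiv \tilde m_{31}$, $m_{41} \equiv \tilde m_{41}$, $m_{42} \equiv \tilde m_{42}$, whence $M \equiv \tilde M$, and Proposition~\ref{prop:uniqM} concludes the argument. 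The main obstacle is the combinatorial step: extracting from $W^T K W = K$ identities clean enough to solve explicitly, as rational functions, for the three unknown entries in terms of the three known ones. A secondary subtlety is that the poles of $m_{21}$ and $m_{43}$ coincide (both at zeros of $\Delta_{11}$), so one must verify via residue analysis that no spurious cancellation occurs there --- this is precisely what the separation condition is designed to rule out.
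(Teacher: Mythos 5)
Your overall frame (compare with a second problem $\tilde{\mathcal L}$, show $M \equiv \tilde M$, invoke Proposition~\ref{prop:uniqM}) matches the paper, and your symplectic identity is essentially the paper's Lemma~\ref{lem:invM}: the relation $(M^{\star}(\la))^T J_0 M(\la) = J_0$ with $M^{\star} = M$ is exactly the Lagrange-identity consequence of formal self-adjointness. However, your plan to extract from it explicit rational formulas for $m_{31}$, $m_{41}$, $m_{42}$ in terms of $m_{21}$, $m_{32}$, $m_{43}$ cannot succeed. Writing out $M^{-1} = J_0^{-1} M^T J_0$ for a unit lower-triangular $M$ yields precisely two nontrivial scalar relations, $m_{21} = m_{43}$ and $m_{31} - m_{21} m_{32} + m_{42} = 0$; the entry $m_{41}$ drops out identically, and the second relation determines only the \emph{sum} $m_{31} + m_{42}$, not the two entries separately. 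What you call ``the main obstacle'' (the combinatorial step) is therefore not an obstacle to be overcome but an actual underdetermination: no amount of algebra on $W^T K W = K$ pins down $m_{31}$ and $m_{41}$ individually.

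The missing ingredients are analytic, and they are the heart of the paper's proof. First, under $(\mathcal S)$ the poles of $m_{31}$ (zeros of $\Delta_{11}$) are disjoint from those of $m_{32}$ (zeros of $\Delta_{22}$), so the relation \eqref{relm31} determines the full principal part of $m_{31}$ at each of its poles from the Laurent coefficients of $m_{21}$ and $m_{32}$; hence $m_{31} - \tilde m_{31}$ is entire. This alone does not give $m_{31} \equiv \tilde m_{31}$: one must combine the estimates of Proposition~\ref{prop:estD} (which give $|(m_{31}-\tilde m_{31})(\rho^4)| \le c_\de |\rho|^{-2}$ on $G_{\de,11\xi}$) with Liouville's theorem. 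Second, $m_{41}$ requires a separate structural input not contained in the symplectic identity at all, namely the expansion $\Phi_1(x,\la) = \xi_1(x,\la) + \sum_{\nu} c_{1\nu} (\la-\la_0)^{-\nu} \Phi_2(x,\la)$ with $c_{1\nu} = m_{21,\langle -\nu\rangle}$, which shows that the principal part of $m_{41} = U_4(\Phi_1)$ at each zero of $\Delta_{11}$ is determined by $m_{21}$ and $m_{42}$; again one finishes with the growth estimate and Liouville. Without the principal-part analysis, the expansion of $\Phi_1$ through $\Phi_2$, and the Phragm\'en--Lindel\"of/Liouville step, your argument does not close.
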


Analogs of Theorem~\ref{thm:Leib} for equation \eqref{ho} with sufficiently smooth coefficients were proved by Leibenzon \cite{Leib66} in the case of simple eigenvalues and by Yurko \cite{Yur02} in the general case. For the problem $\mathcal L$, the proof of Theorem~\ref{thm:Leib} is provided in Section~\ref{sec:gen}.

Relying on Theorem~\ref{thm:Leib}, we prove the following uniqueness theorem for the solution of Inverse Problem~\ref{ip:main}.

\begin{thm} \label{thm:uniq1}
Suppose that $p$ and $q$ are complex-valued functions of $L_1[0,1]$, $a, b, c \in \mathbb C$, the eigenvalues $\{ \la_n \}_{n \ge 1}$ of the problem $\mathcal L$ are simple, and the condition $(\mathcal S)$ holds. Then, the spectral data $\{ \la_n, \xi_n, \ga_n \}_{n \ge 1}$ uniquely specify $p, q, a, b, c$.
\end{thm}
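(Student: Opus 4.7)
The plan is to show, under $(\mathcal{S})$, that McLaughlin's data $\{\la_n, \ga_n, \xi_n\}$ determine each of the three subdiagonal entries $m_{21}(\la)$, $m_{32}(\la)$, $m_{43}(\la)$ of the Weyl--Yurko matrix, after which Theorem~\ref{thm:Leib} yields the conclusion. The crucial inputs will be the residues of $M(\la)$ at the eigenvalues together with the functional identities imposed on $M(\la)$ by the formal self-adjointness of $\ell$.

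I first connect the data to residues of the second column of $M(\la)$. From the decomposition $\Phi_2 = C_2 + m_{32} C_3 + m_{42} C_4$ and the biorthogonality \eqref{initC} one has $m_{32}(\la) = \Phi_2(0, \la)$ and $m_{42}(\la) = \Phi_2'(0, \la)$. Near $\la = \la_n$, the pole residue of $\Phi_2(\cdot, \la)$ must be a $\la_n$-eigenfunction of $\mathcal L$, hence equal to $\rho_n y_n$ for some constant $\rho_n$. Testing the regular part $B_n(x)$ of $\Phi_2$ against $y_n$ via the Lagrange identity, the boundary form at $x = 1$ vanishes by the Dirichlet conditions on both functions, while at $x = 0$ it collapses --- after the $(a, b, c)$-cancellations forced by $U_1(y_n) = U_2(y_n) = 0$ and $U_1(B_n) = 0$, $U_2(B_n) = 1$ --- to $\ga_n$. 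This gives $\rho_n = -\ga_n$, and hence
\[
\Res_{\la = \la_n} m_{32}(\la) = -\ga_n^2, \qquad \Res_{\la = \la_n} m_{42}(\la) = -\ga_n \xi_n.
\]
Together with the asymptotic bounds on $M(\la)$ from \cite{Bond23-mmas}, these principal parts reconstruct $m_{32}$ and $m_{42}$ uniquely.

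I next exploit the Lagrange bracket $\{y, z\}(x) := y^{[3]} z - y'' z' + y' z'' - y z^{[3]}$, which is $x$-independent on pairs of solutions of $\ell y = \la y$. With the (universal) antisymmetric matrix $\Omega := [\{C_j, C_k\}(0)]_{j, k = 1}^{4}$, whose $(a, b, c)$-dependence cancels identically, the relation $M(\la)^\top \Omega M(\la) = [\{\Phi_j, \Phi_k\}(x)]_{j, k}$ holds for every $x$. Evaluating at $x = 1$, where most entries collapse thanks to $V_\eta(\Phi_k) = 0$, yields the two nontrivial functional symmetries
\[
m_{43}(\la) = m_{21}(\la), \qquad m_{21}(\la)\, m_{32}(\la) - m_{42}(\la) = m_{31}(\la).
\]
Since $m_{21}$ and $m_{31}$ are analytic at each $\la_n$ under $(\mathcal S)$, matching $(\la - \la_n)^{-1}$-coefficients in the second identity gives $m_{21}(\la_n) = \xi_n/\ga_n$ whenever $\ga_n \ne 0$.

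Suppose finally that $\tilde{\mathcal L}$ shares the spectral data with $\mathcal L$. The previous steps force $m_{32} = \tilde m_{32}$, $m_{42} = \tilde m_{42}$, and $m_{21}(\la_n) = \tilde m_{21}(\la_n)$ for every $n$ with $\ga_n \ne 0$. Writing $m_{21}$ and $\tilde m_{21}$ as ratios of entire functions of order $1/4$ whose growth on non-real rays is controlled via \cite{Bond21, Bond23-mmas}, an interpolation / Phragm\'en--Lindel\"of argument based on the density $\la_n \sim n^4$ of the coincidence set forces $m_{21} \equiv \tilde m_{21}$; the first symmetry then gives $m_{43} = \tilde m_{43}$, and Theorem~\ref{thm:Leib} closes the proof. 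The hardest step will be this interpolation/asymptotic argument, which requires the sharp growth estimates from the cited Bondarenko papers, together with a separate treatment of the exceptional indices where $\ga_n = 0$ --- handled by expanding the second symmetry identity to higher order about $\la_n$ and using that $(\ga_n, \xi_n) \ne (0, 0)$ (since otherwise the eigenfunction $y_n$ would vanish identically).
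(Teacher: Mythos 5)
Your proposal follows essentially the same route as the paper: recover the residues of the second column of $M(\la)$ at the eigenvalues, extract $m_{32}$ by Mittag--Leffler, extract the values $m_{21}(\la_n)=\xi_n/\ga_n$, recover $m_{21}\equiv m_{43}$ from these values by a Phragm\'en--Lindel\"of argument, and invoke Theorem~\ref{thm:Leib}. The minor variations are harmless: you obtain $m_{21}(\la_n)=\xi_n/\ga_n$ by taking residues in the symmetry identity $m_{31}=m_{21}m_{32}-m_{42}$, whereas the paper reads it off directly from $\Phi_3'(0,\la_n)=m_{43}(\la_n)$ after showing that $\Phi_3(\cdot,\la_n)=y_n/\ga_n$ is the eigenfunction; both are correct. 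Also, your worry about ``exceptional indices with $\ga_n=0$'' is unfounded: under $(\mathcal S)$ one has $\Delta_{33}(\la_n)\ne 0$, so $\Phi_3(\cdot,\la_n)$ is a well-defined eigenfunction with $\Phi_3(0,\la_n)=1$, and simplicity of $\la_n$ forces $\ga_n\ne 0$ for every $n$ (this is part of the paper's Lemma~\ref{lem:findbg}); no separate treatment is needed.

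The one place where your argument is not yet a proof is the step you yourself flag as hardest. ``Interpolation based on the density $\la_n\sim cn^4$'' cannot by itself force $m_{21}\equiv\tilde m_{21}$: the set $\{\la_n\}$ has convergence exponent $1/4$, which equals the order of the entire functions involved, so an entire function of order $1/4$ (e.g.\ $\Delta_{22}$ itself) can vanish exactly on this set without vanishing identically. What actually closes the argument (the paper's Lemma~\ref{lem:m43}) is the specific quotient $K(\la)=\bigl(\Delta_{33}(\la)\tilde\Delta_{43}(\la)-\tilde\Delta_{33}(\la)\Delta_{43}(\la)\bigr)/\Delta_{22}(\la)$, which is entire because the numerator vanishes at every $\la_n$, satisfies $K(\rho^4)=O\bigl(\rho^{-1}\exp(\mathrm{Re}\,(\rho(\om_4-\om_3)))\bigr)$ by Proposition~\ref{prop:estD}, and hence has order $1/4$ and tends to zero as $\la\to-\infty$ because $\mathrm{Re}\,(\rho(\om_4-\om_3))=0$ on the ray $\arg\rho=\pi/4$; Phragm\'en--Lindel\"of for functions of order less than $1/2$ then gives $K\equiv 0$. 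This degeneration of the exponential on a single ray is the essential (and order-four-specific) point that your sketch must supply before the proof is complete.
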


The condition $(\mathcal S)$ significantly simplifies the analysis of the inverse problem, however, it is not necessary. The following theorem establishes the uniqueness without the separation condition.

\begin{thm} \label{thm:uniq2}
Suppose that $p$ and $q$ are complex-valued functions of $L_1[0,1]$, $a, b, c \in \mathbb C$, and the entire functions $\Delta_{11}(\la)$ and $\Delta_{22}(\la)$ have only simple zeros. Then, the spectral data $\{ \la_n, \xi_n, \ga_n \}_{n \ge 1}$ uniquely specify $p, q, a, b, c$.
\end{thm}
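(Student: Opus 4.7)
The plan is to follow the same overall route as for Theorem~\ref{thm:uniq1} --- recover the Weyl-Yurko entries $m_{21}(\la)$, $m_{32}(\la)$, $m_{43}(\la)$ from McLaughlin's data and then invoke a uniqueness result from Section~\ref{sec:gen} --- but without relying on the separation condition $(\mathcal S)$. As in the proof of Theorem~\ref{thm:uniq1}, I will consider a second problem $\tilde{\mathcal L}$ with the same spectral data $\{\la_n, \xi_n, \ga_n\}_{n \ge 1}$ and show $\mathcal L = \tilde{\mathcal L}$.

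The recovery of $m_{32}(\la)$ and $m_{42}(\la)$ is the easier half. From \eqref{relM} and the initial conditions $U_s(C_k) = \de_{sk}$ one obtains $m_{32}(\la) = \Phi_2(0,\la)$ and $m_{42}(\la) = \Phi_2'(0,\la)$. These meromorphic functions have simple poles at $\{\la_n\}$, and their residues are expressible through $\ga_n$, $\xi_n$ and $\dot\Delta_{22}(\la_n)$, exploiting that the normalized eigenfunction equals $y_n(x) = \ga_n C_3(x,\la_n) + \xi_n C_4(x,\la_n)$. Combined with the Hadamard factorization of $\Delta_{22}(\la)$ built from $\{\la_n\}$ using the asymptotics of Section~\ref{sec:prelim}, this determines $m_{32}(\la)$ and $m_{42}(\la)$ uniquely. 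The formal self-adjoint structure of \eqref{eqv}--\eqref{bc} --- reflected in Lemma~\ref{lem:aux1} ($\Delta_{11} \equiv -\Delta_{33}$) and its off-diagonal analogues --- relates the Weyl-Yurko entries at $x = 0$ to those of a dual problem at $x = 1$, and in particular delivers $m_{43}(\la)$ from $m_{32}$ applied to the dual problem.

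The delicate step is the recovery of $m_{21}(\la)$, whose poles are the zeros of $\Delta_{11}(\la)$ rather than of $\Delta_{22}(\la)$. Under $(\mathcal S)$ these sets are disjoint and the argument of Section~\ref{sec:uniq1} applies; without $(\mathcal S)$, one has to handle common zeros $\la_\star$ of $\Delta_{11}$ and $\Delta_{22}$. The plan is to expand the identity
\begin{equation*}
\Phi_1(x,\la) = C_1(x,\la) + m_{21}(\la) C_2(x,\la) + m_{31}(\la) C_3(x,\la) + m_{41}(\la) C_4(x,\la)
\end{equation*}
into a Laurent series around $\la_\star$, eliminate $m_{31}$ and $m_{41}$ through the self-adjoint symmetries combined with the already recovered $m_{32}, m_{42}, m_{43}$, and extract the residue of $m_{21}$ by matching coefficients against the entireness of $\Delta_{21}(\la)$ and the known principal parts. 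The hypothesis that both $\Delta_{11}$ and $\Delta_{22}$ have only simple zeros is used here to ensure that all relevant poles are of first order and the Laurent calculation closes up. Residues of $m_{21}$ at zeros of $\Delta_{11}$ lying outside $\{\la_n\}$ are obtained similarly, using the dual data at $x = 1$ produced by the self-adjoint symmetry.

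The main obstacle is this bookkeeping at common zeros: showing that the difference $m_{21} - \tilde m_{21}$ is entire and then identically zero, without any direct eigenfunction formula for the residues of $m_{21}$. Once $m_{21}, m_{32}, m_{43}$ are shown to coincide with $\tilde m_{21}, \tilde m_{32}, \tilde m_{43}$, the proof concludes either by applying a refinement of Theorem~\ref{thm:Leib} established in Section~\ref{sec:gen} under the simple-pole hypothesis in place of $(\mathcal S)$, or by recovering sufficiently many remaining entries of the Weyl-Yurko matrix and appealing to Proposition~\ref{prop:uniqM}. I expect the combination of Laurent-coefficient comparison at common zeros with the correct use of the formal self-adjoint symmetries to form the technical heart of the argument.
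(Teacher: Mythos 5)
Your plan correctly identifies where the difficulty lies, but it does not resolve it, and the concluding step rests on a result that does not exist. You propose to finish by ``applying a refinement of Theorem~\ref{thm:Leib} established in Section~\ref{sec:gen} under the simple-pole hypothesis in place of $(\mathcal S)$'' --- no such refinement is established there, and it is doubtful one can be: without $(\mathcal S)$, the three functions $m_{21}$, $m_{32}$, $m_{43}$ alone do not control the rest of $M(\la)$. At a common zero $\la_\star$ of $\Delta_{11}$ and $\Delta_{22}$, the identity \eqref{relm31} gives
\begin{equation*}
m_{31,\langle -1\rangle}(\la_\star) = m_{21,\langle -1\rangle}(\la_\star)\, m_{32,\langle 0\rangle}(\la_\star) + m_{21,\langle 0\rangle}(\la_\star)\, m_{32,\langle -1\rangle}(\la_\star) - m_{42,\langle -1\rangle}(\la_\star),
\end{equation*}
which involves the \emph{regular} Laurent coefficients $m_{21,\langle 0\rangle}$, $m_{32,\langle 0\rangle}$; these are not determined by the principal parts you recover, so the ``elimination of $m_{31}$ and $m_{41}$'' and the claim that $m_{31}-\tilde m_{31}$, $m_{41}-\tilde m_{41}$ are entire do not close up. This entanglement of regular and singular parts at common zeros is precisely why the paper abandons the route through Theorem~\ref{thm:Leib} and instead reduces to Theorem~\ref{thm:sd}, whose data are the weight matrices $\mathcal N(\la_0)=(M_{\langle 0\rangle})^{-1}M_{\langle -1\rangle}$, i.e.\ exactly the combinations of Laurent coefficients that \emph{can} be extracted.

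The second missing ingredient is the actual mechanism for computing the residues of $m_{31}$ and $m_{41}$ (equivalently $\Delta_{31}(\la_n)$, $\Delta_{41}(\la_n)$) from McLaughlin's data at the non-separated eigenvalues. The paper does this not by Laurent bookkeeping but by concrete eigenfunction identities: in case (II) of \eqref{cases} one has $S_4(x,\la_n)=\al_n y_n(x)$ with $\al_n$ determined by \eqref{findal}, whence $\Delta_{31}(\la_n)=-\al_n\ga_n$ and $\Delta_{41}(\la_n)=-\al_n\xi_n$; in cases (III)--(IV) one computes $n_{41}(\la_n)=\xi_n^2$ directly (Lemma~\ref{lem:cases34}). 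It also needs Algorithm~\ref{alg:case} to decide, from the data alone, which case each $\la_n$ falls into, and a separate argument (Lemma~\ref{lem:recm43}) to recover $m_{43}$ without $(\mathcal S)$ --- your one-line appeal to ``the dual problem'' glosses over the case analysis required there. Finally, note that your formula $\be_n=-\ga_n^2$ for the residue of $m_{32}$ is only justified when $\Delta_{33}(\la_n)\ne 0$ (Lemma~\ref{lem:findbg}); in case (II) the paper obtains $n_{32}$ instead from the rank-one relation \eqref{det0}. In short, the proposal is a reasonable outline of \emph{where} the work lies, but the technical heart --- which you yourself flag as ``the main obstacle'' --- is left unproven, and the intended reduction target is wrong.
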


In view of \eqref{mjk}, under the conditions of Theorem~\ref{thm:uniq2}, the Weyl-Yurko matrix has only simple poles. Then, the Laurent series with respect to any pole $\la_0$ has the form
$$
M(\la) = \frac{M_{\langle -1 \rangle}(\la_0)}{\la - \la_0} + M_{\langle 0 \rangle}(\la_0) + M_{\langle 1\rangle}(\la_0) (\la - \la_0) + \dots,
$$
where $M_{\langle j \rangle}$ are $(4 \times 4)$ constant matrices. Denote
\begin{equation} \label{defN}
\mathcal N(\la_0) := (M_{\langle 0 \rangle}(\la_0))^{-1} M_{\langle -1 \rangle}(\la_0).
\end{equation}

Then, instead of the Weyl-Yurko matrix $M(\la)$, one can consider the discrete spectral data $\{ \la_0, \mathcal N(\la_0) \}_{\la_0 \in \Lambda}$, where $\Lambda$ is the set of the poles of $M(\la)$. The uniqueness of recovering the higher-order differential operators from the discrete spectral data has been proved in \cite{Yur02} for regular coefficients and in \cite{Bond23-reg} for distribution coefficients. However, in \cite{Yur02} and \cite{Bond23-reg}, other types of boundary conditions were considered, so we prove the following theorem:

\begin{thm} \label{thm:sd}
Suppose that the functions $\Delta_{11}(\la)$ and $\Delta_{22}(\la)$ have only simple zeros. Then the spectral data $\{ \la_0, \mathcal N(\la_0) \}_{\la_0 \in \Lambda}$ uniquely specify $p,q,a,b,c$.
\end{thm}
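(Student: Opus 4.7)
The plan is to invoke Proposition~\ref{prop:uniqM} and thus reduce the theorem to showing that the discrete spectral data $\{\la_0, \mathcal N(\la_0)\}_{\la_0\in\Lambda}$ uniquely determine the Weyl-Yurko matrix $M(\la)$. I would follow the method of spectral mappings of Yurko \cite{Yur02}, adapted to distribution coefficients in \cite{Bond23-reg}, and modify it for the boundary conditions \eqref{bc}.

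First, using the representation \eqref{mjk}, the hypothesis that $\Delta_{11}$ and $\Delta_{22}$ have only simple zeros, and the identity $\Delta_{33}\equiv-\Delta_{11}$ (which will be proven in Lemma~\ref{lem:aux1}), one concludes that every pole of $M(\la)$ is simple. Since $M(\la)$ is unit lower triangular by \eqref{matrM}, the residue $M_{\langle -1\rangle}(\la_0)$ is strictly lower triangular, while $M_{\langle 0\rangle}(\la_0)$ is unit lower triangular and hence invertible. Consequently $\mathcal N(\la_0)=M_{\langle 0\rangle}^{-1}M_{\langle -1\rangle}$ is well defined and nilpotent with $\mathcal N^{4}=0$.

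Now assume $\mathcal L$ and $\tilde{\mathcal L}$ share all discrete spectral data. The identity $\mathcal N(\la_0)=\tilde{\mathcal N}(\la_0)$ translates into the algebraic relation
\[
M_{\langle -1\rangle}(\la_0)-\tilde M_{\langle -1\rangle}(\la_0)=\bigl(M_{\langle 0\rangle}(\la_0)-\tilde M_{\langle 0\rangle}(\la_0)\bigr)\,\mathcal N(\la_0),\qquad \la_0\in\Lambda,
\]
which couples the jump of the residue to the jump of the regular part. Following \cite{Yur02, Bond23-reg}, I would construct an auxiliary matrix-valued Cauchy-type integral over expanding contours $\Gamma_N$ of the form $\frac{1}{2\pi i}\oint_{\Gamma_N}\bigl(M(\mu)-\tilde M(\mu)\bigr)K(\mu,\la)\,d\mu$, where the kernel $K(\mu,\la)$ is tailored so that the residue at each $\la_0\in\Lambda$ vanishes precisely by virtue of the relation above. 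Letting $N\to\infty$ and invoking the large-$|\la|$ asymptotics of the Weyl solutions and of $M(\la)$ that are available for $p,q\in L_1[0,1]$ through \cite{Bond21,Bond22-alg} (to be summarized in Section~\ref{sec:prelim}), the contour contribution vanishes in the limit, whence $M(\la)\equiv\tilde M(\la)$, and Proposition~\ref{prop:uniqM} finishes the proof.

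The main obstacle is the algebraic design of the kernel $K(\mu,\la)$: one must verify, via a delicate bookkeeping in the nilpotent algebra of $4\times4$ strictly lower-triangular matrices, that all residues are annihilated under the sole hypothesis $\mathcal N=\tilde{\mathcal N}$, without assuming coincidence of any further Laurent coefficients. A secondary technical difficulty is that, due to the non-self-adjointness of $\mathcal L$, the contours $\Gamma_N$ must be chosen to avoid potential clusters of eigenvalues, which requires a careful application of the sectorial asymptotics established in the preliminaries.
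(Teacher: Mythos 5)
Your proposal is correct in outline but takes a genuinely different route from the paper, and its central step is left as a sketch. You reduce the theorem to showing that the discrete data determine $M(\la)$, via a Cauchy-type integral of $M-\tilde M$ against a kernel engineered so that all residues cancel, and then you invoke Proposition~\ref{prop:uniqM}. The paper never reconstructs $M$: it forms the matrix of spectral mappings $\mathcal P(x,\la)=\Phi(x,\la)\tilde\Phi^{-1}(x,\la)$ as in \eqref{defP}, quotes \cite[Lemma~9]{Bond22-alg} for the fact that coincidence of the discrete data forces $\mathcal P$ to be independent of $\la$, derives the first-order relation \eqref{eqP} from the systems satisfied by $\Phi$ and $\tilde\Phi$, concludes $\mathcal P\equiv I$ by \cite[Lemma~2.4]{Bond23-mmas} (hence $p=\tilde p$, $q=\tilde q$ at once), and finally reads $a,b,c$ off the boundary conditions \eqref{bcPhi} for $\Phi_3,\Phi_4$. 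The advantage of that factorization is precisely that it dissolves what you call your ``main obstacle'': the regularity of $\Phi\tilde\Phi^{-1}$ (equivalently of $M\tilde M^{-1}$) at the poles is exactly the property that the definition \eqref{defN} of $\mathcal N(\la_0)$ is designed to encode, so no kernel has to be invented. If you pursue your route, be aware that the single relation $M_{\langle -1\rangle}(\la_0)-\tilde M_{\langle -1\rangle}(\la_0)=\bigl(M_{\langle 0\rangle}(\la_0)-\tilde M_{\langle 0\rangle}(\la_0)\bigr)\mathcal N(\la_0)$ is not sufficient on its own: with the natural kernel $\tilde M^{-1}(\mu)/(\mu-\la)$, the product $M\tilde M^{-1}$ a priori has a double pole at $\la_0$, and annihilating both the $(\la-\la_0)^{-2}$ and $(\la-\la_0)^{-1}$ coefficients also uses the Laurent relations for $\tilde M^{-1}$ coming from $\tilde M\tilde M^{-1}=I$ together with the nilpotency $\mathcal N(\la_0)^2=0$; the latter does hold under the simple-zero hypothesis, but it is not automatic --- after the symmetries $n_{21}=n_{43}$ and $n_{31}=-n_{42}$ from Lemma~\ref{lem:invM} it reduces to $n_{21}n_{32}=0$, which must be verified through the case analysis of \eqref{cases}. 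This is the entire content of the step you defer, so as written your argument is a plan rather than a proof; your secondary concern about contours avoiding clusters of poles is handled by the regions $G_{\de,jk\xi}$ of Proposition~\ref{prop:estD}.
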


In order to prove Theorem~\ref{thm:uniq2}, we will show that the spectral data $\{ \la_n, \xi_n, \ga_n \}_{n \ge 1}$ uniquely determine $\{ \la_0, \mathcal N(\la_0) \}_{\la_0 \in \Lambda}$. For this purpose, we need to study the structure of the weight matrices $\mathcal N(\la_0)$, which differs in the following five cases:
\begin{equation} \label{cases}
\left.\begin{array}{rl}
    (I) \colon & \quad U_3(y_n) \ne 0, \quad V_3(y_n) \ne 0, \\
    (II) \colon & \quad U_3(y_n) \ne 0, \quad V_3(y_n) = 0, \\
    (III) \colon & \quad U_3(y_n) = 0, \quad V_3(y_n) \ne 0, \\
    (IV) \colon & \quad U_3(y_n) = 0, \quad V_3(y_n) = 0, \\
    (V) \colon & \quad \la_0 \not\in \{ \la_n \}_{n \ge 1}.
\end{array} \qquad \right\}
\end{equation}
(In the cases (I)--(IV), we mean that $\la_0 = \la_n$). If the zeros of $\Delta_{11}(\la)$ can be multiple and the separation condition $(\mathcal S)$ does not hold, then the structure of the Laurent series for $M(\la)$ becomes more complicated and the inverse problem, very technically difficult, so we exclude this case from consideration. Anyway, Theorems~\ref{thm:uniq1} and~\ref{thm:uniq2} show that the uniqueness for solution of McLaughlin's problem does not require:
\begin{itemize}
\item self-adjointness;
\item smoothness of $p$ and $q$;
\item separation condition.
\end{itemize}

\section{Preliminaries} \label{sec:prelim}

In this section, we study the properties of the Weyl-Yurko matrix, obtain useful relations and asymptotic estimates for the characteristic functions $\Delta_{jk}(\la)$, and provide other preliminaries.

In the standard way, equation \eqref{eqv} for $y \in \mathcal D$ can be represented as the first-order system
\begin{equation} \label{sys}
Y' = (F(x) + \Lambda) Y, \quad x \in (0,1),
\end{equation}
where
\begin{equation} \label{defF}
Y := \begin{bmatrix}
 y \\ y' \\ y'' \\ y^{[3]}
\end{bmatrix}, \quad
F(x) := \begin{bmatrix}
            0 & 1 & 0 & 0 \\
            0 & 0 & 1 & 0 \\
            0 & p(x) & 0 & 1\\
            -q(x) & 0 & 0 & 0
        \end{bmatrix}, \quad
\Lambda := \begin{bmatrix}
                0 & 0 & 0 & 0 \\
                0 & 0 & 0 & 0 \\
                0 & 0 & 0 & 0 \\
                \la & 0 & 0 & 0             
            \end{bmatrix}.
\end{equation}

The linear forms $\{ U_k \}_{k =1}^4$ and $\{ V_k \}_{k = 1}^4$ can be represented by the matrices
\begin{equation} \label{matrUV}
U = \begin{bmatrix}
        -b & a & 1 & 0 \\
        c & b & 0 & 1 \\
        1 & 0 & 0 & 0 \\
        0 & 1 & 0 & 0
    \end{bmatrix}, \quad V = \begin{bmatrix}
        1 & 0 & 0 & 0 \\
        0 & 1 & 0 & 0 \\
        0 & 0 & 1 & 0 \\
        0 & 0 & 0 & 1
    \end{bmatrix}
\end{equation}
The $k$-th rows of the matrices $U$ and $V$ contain the corresponding coefficients of the linear forms $U_k$ and $V_k$, respectively.

Denote by $\{ p_{k,0} \}_{k = 1}^4$ and $\{ p_{k,1} \}_{k = 1}^4$ the orders of the higher derivatives in the linear forms $\{ U_k \}_{k = 1}^4$ and $\{ V_k \}_{k = 1}^4$, respectively. According to \eqref{bc} and \eqref{defUV}, we have 
\begin{equation} \label{defp} 
p_{1,0} = 2, \quad p_{2,0} = 3, \quad p_{3,0} = 0, \quad p_{4,0} = 1, \quad p_{s,1} = s-1, \quad s = \overline{1,4}.
\end{equation}

\begin{lem} \label{lem:invM}
The following relation hold:
\begin{equation} \label{invM}
M^{-1}(\la) = \begin{bmatrix}
                   1 & 0 & 0 & 0 \\
                   -m_{43}(\la) & 1 & 0 & 0 \\
                   m_{42}(\la) & -m_{32}(\la) & 1 & 0 \\
                   -m_{41}(\la) & m_{31}(\la) & -m_{21}(\la) & 1
              \end{bmatrix}.
\end{equation}
Consequently, $m_{21}(\la) = m_{43}(\la)$ and
\begin{equation} \label{relm31}
m_{31}(\la) - m_{21}(\la) m_{32}(\la) + m_{42}(\la) = 0.
\end{equation}
\end{lem}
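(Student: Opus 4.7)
The plan is to deduce \eqref{invM} from a symplectic-type symmetry of $M(\la)$ that encodes the formally self-adjoint structure of $\ell$. Introduce the bilinear form
$$
\langle y, z \rangle(x) := y(x) z^{[3]}(x) - y'(x) z''(x) + y''(x) z'(x) - y^{[3]}(x) z(x),
$$
and, using $(y^{[3]})' = y^{(4)} - (p y')'$, verify directly the Lagrange identity $\frac{d}{dx} \langle y, z \rangle = z \ell(y) - y \ell(z)$. Consequently, $\langle y, z \rangle$ is constant in $x$ whenever $y$ and $z$ both satisfy $\ell(u) = \la u$. Equivalently, setting
$$
J := \begin{bmatrix} 0 & 0 & 0 & 1 \\ 0 & 0 & -1 & 0 \\ 0 & 1 & 0 & 0 \\ -1 & 0 & 0 & 0 \end{bmatrix},
$$
the entries of the matrix $C^T(x, \la)\, J\, C(x, \la)$ are independent of $x \in [0,1]$.

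Next, I would evaluate this matrix at $x = 0$. The initial conditions \eqref{initC} yield explicit values of $C_k^{[j]}(0)$ for $j = 0, 1, 2, 3$ and $k = 1, \ldots, 4$, obtained by inverting the linear system built from $U$ in \eqref{matrUV}. A routine computation of each $\langle C_j, C_k \rangle(0)$ should show that all contributions involving $a, b, c$ cancel (this is exactly the imprint of the symmetric placement of $b$ in $U_1, U_2$ that makes $\mathcal L$ formally self-adjoint), leaving $C^T(0)\, J\, C(0) = J$. Combined with constancy in $x$, this gives $C^T(x, \la)\, J\, C(x, \la) \equiv J$. Substituting the factorization $\Phi = C M$ from \eqref{relM} then yields the key symmetry
$$
M^T(\la)\, J\, M(\la) = J.
$$

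Finally, a direct check gives $J^2 = -I$, so $J^{-1} = -J$, and the symmetry rearranges to $M^{-1}(\la) = -J\, M^T(\la)\, J$. Expanding this product with $M$ as in \eqref{matrM} reproduces the right-hand side of \eqref{invM}. The relations $m_{21} = m_{43}$ and \eqref{relm31} then drop out immediately from $M M^{-1} = I$: the $(2,1)$ entry gives $m_{21} - m_{43} = 0$, and the $(3,1)$ entry gives $m_{31} - m_{32} m_{43} + m_{42} = 0$, which becomes \eqref{relm31} after substituting $m_{43} = m_{21}$. The only real obstacle is the bookkeeping in the verification $C^T(0)\, J\, C(0) = J$, but that is purely mechanical once the values of $C_k^{[j]}(0)$ are written out, and the cancellations are forced by the design of the boundary forms $U_1, U_2$.
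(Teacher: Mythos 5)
Your overall strategy coincides with the paper's: the formula \eqref{invM} is equivalent to the symplectic-type symmetry $M^T(\la)\,J\,M(\la) = J$ with the anti-diagonal matrix $J = J_0$ of \eqref{defJ}, and this symmetry is indeed the imprint of the formally self-adjoint structure of $\ell$ and of the boundary forms. The paper obtains it by invoking the duality relation $(M^{\star})^T J_0 M = J_0$ from \cite{Bond22-alg} and checking that the problem coincides with its dual ($F^{\star}=F$, $U^{\star}=U$, $V^{\star}=V$); you propose to prove it from scratch via the Lagrange bracket, which is a legitimate, more self-contained route. Your final algebra is also correct: $J^{-1}M^TJ$ with $J^{-1}=-J$ does reproduce the right-hand side of \eqref{invM}, and the $(2,1)$ and $(3,1)$ entries of $MM^{-1}=I$ give $m_{21}=m_{43}$ and \eqref{relm31}.

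There is, however, a genuine gap at the pivotal step. From $C^T(x,\la)\,J\,C(x,\la)\equiv J$ and $\Phi=CM$ you may only conclude
$$
M^T J M = M^T\bigl(C^T J C\bigr)M = \Phi^T J\,\Phi ,
$$
not that this equals $J$. The identity $C^TJC\equiv J$ is a property of the fundamental system $C$ alone and carries no information about how $M$ is defined, so "substituting the factorization" proves nothing about $M$ by itself. To close the argument you must separately verify $\Phi^T(x,\la)\,J\,\Phi(x,\la)\equiv J$, and this is exactly where the two-point boundary conditions \eqref{bcPhi} of the Weyl solutions enter. Concretely, each off-diagonal entry of $\Phi^TJ\Phi$ is the ($x$-independent) Lagrange bracket $\langle\Phi_j,\Phi_k\rangle$: for $(j,k)=(1,2),(1,3)$ evaluate at $x=1$, where the conditions $V_{\eta}(\Phi_k)=0$ annihilate every term, and for $(j,k)=(1,4),(2,3),(2,4),(3,4)$ evaluate at $x=0$ using \eqref{brack0} together with $U_s(\Phi_k)=\de_{sk}$, $s\le k$, which yields precisely the entries of $J$. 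Note also that the cancellation of $a,b,c$ you anticipate in the check $C^T(0)JC(0)=J$ is exactly the content of the identity \eqref{brack0}; once the bracket at $x=0$ is rewritten in terms of $U_1,\dots,U_4$, both verifications reduce to substituting Kronecker deltas from \eqref{initC} and \eqref{bcPhi}. With the computation of $\Phi^TJ\Phi$ supplied, your proof is complete.
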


\begin{proof}
Following the approach of \cite[Section 2]{Bond22-alg}, we consider the matrices $F^{\star}(x) = [f_{k,j}^{\star}(x)]_{k,j = 1}^4$, $U^{\star}$, and $V^{\star}$ generated by the relations
$$
f_{k,j}^{\star}(x) := (-1)^{k + j + 1} f_{5 - j, 5 - k}^{\star}(x), \quad
U := (J_0^{-1} U^{-1} J_0)^T, \quad V := (J_1^{-1} V^{-1} J_1)^T,
$$
where $T$ denotes the matrix transpose, $J_s := [(-1)^{p_{k,s}^{\star}} \delta_{k,5-j}]_{k,j = 1}^4$, $p_{k,s}^{\star} := 3 - p_{5 - k,s}$, $s = 0, 1$. In our case, in view of \eqref{defF}, \eqref{defUV}, and \eqref{defp},
we have
\begin{equation} \label{defJ}
J_0 = J_1 = \begin{bmatrix}
                0 & 0 & 0 & 1 \\
                0 & 0 & -1 & 0 \\
                0 & 1 & 0 & 0 \\
                -1 & 0 & 0 & 0
            \end{bmatrix}
\end{equation}
and $F^{\star}(x) = F(x)$, $U^{\star} = U$, $V^{\star} = V$.

In \cite{Bond22-alg}, the following relation has been obtained for the Weyl-Yurko matrices $M(\la)$ and $M^{\star}(\la)$ constructed for the problems $(F(x), U, V)$ and $(F^{\star}(x), U^{\star}, V^{\star})$:
\begin{equation} \label{relMMs}
(M^{\star}(\la))^T J_0 M(\la) = J_0. 
\end{equation}
In our case, $M(\la) = M^{\star}(\la)$, so the relation \eqref{relMMs} implies
$$
M^{-1}(\la) = J_0^{-1} M^T(\la) J_0.
$$
Using \eqref{matrM} and \eqref{defJ}, we arrive at \eqref{invM}.
\end{proof}

For $y, z \in \mathcal D$, define the Lagrange bracket
$$
\langle y, z \rangle = y^{[3]} z - y''z' + y'z'' - y z^{[3]}.
$$
Using \eqref{bc} and \eqref{defUV}, we obtain the relation
\begin{equation} \label{brack0}
\langle y, z \rangle_{|x = 0} = U_2(y) U_3(z) - U_1(y) U_4(z) + U_4(y) U_1(z) - U_3(y) U_2(z).
\end{equation}

The following Lagrange identity can be proved by direct calculations:
$$
\int_0^1 \ell(y) z \, dx = \langle y, z \rangle\big|_0^1 + \int_0^1 y \ell(z) \, dx.
$$
In particular, if $\ell(y) = \la y$ and $\ell(z) = \mu z$, then
\begin{equation} \label{wron}
\langle y, z \rangle\big|_0^1 = (\la - \mu) \int_0^1 y z \, dx.
\end{equation}

For convenience, let us explicitly write down the determinants $\Delta_{jk}(\la)$, which appear in \eqref{mjk}:
\begin{gather} \label{D1}
\Delta_{11}(\la) = \begin{vmatrix}
  C_2''(1,\la) & C_3''(1,\la) & C_4''(1,\la) \\
  C_2'(1,\la) & C_3'(1,\la) & C_4'(1,\la) \\
  C_2(1,\la) & C_3(1,\la) & C_4(1,\la)
\end{vmatrix}, \quad
\Delta_{21}(\la) = \begin{vmatrix}
  C_1''(1,\la) & C_3''(1,\la) & C_4''(1,\la) \\
  C_1'(1,\la) & C_3'(1,\la) & C_4'(1,\la) \\
  C_1(1,\la) & C_3(1,\la) & C_4(1,\la)
 \end{vmatrix}, \\ \label{D2}
\Delta_{31}(\la) = \begin{vmatrix}
  C_2''(1,\la) & C_1''(1,\la) & C_4''(1,\la) \\
  C_2'(1,\la) & C_1'(1,\la) & C_4'(1,\la) \\
  C_2(1,\la) & C_1(1,\la) & C_4(1,\la)
\end{vmatrix}, \quad
\Delta_{41}(\la) =  \begin{vmatrix}
  C_2''(1,\la) & C_3''(1,\la) & C_1''(1,\la) \\
  C_2'(1,\la) & C_3'(1,\la) & C_1'(1,\la) \\
  C_2(1,\la) & C_3(1,\la) & C_1(1,\la)
\end{vmatrix}, \\ \label{D3}
\Delta_{22}(\la) = \begin{vmatrix}
C_3'(1,\la) & C_4'(1, \la) \\
C_3(1,\la) & C_4(1,\la)
\end{vmatrix}, \quad
\Delta_{32}(\la) = \begin{vmatrix}
C_2'(1,\la) & C_4'(1, \la) \\
C_2(1,\la) & C_4(1,\la)
\end{vmatrix}, \\
\Delta_{42}(\la) = 
\begin{vmatrix}
C_3'(1,\la) & C_2'(1, \la) \\ \label{D4}
C_3(1,\la) & C_2(1,\la)
\end{vmatrix}, \quad \Delta_{33}(\la) = C_4(1,\la), \quad \Delta_{43}(\la) = C_3(1,\la).
\end{gather}

Denote by $\{ S_k(x, \la) \}_{k = 1}^4$ the solutions of equation \eqref{eqv} satisfying the initial conditions
\begin{equation} \label{initS}
V_s(S_k) = \de_{sk}, \quad s = \overline{1,4}.
\end{equation}

\begin{lem} \label{lem:aux1}
The following relations hold:
\begin{align} \label{D11}
& \Delta_{11}(\la) = U_1(S_4) = -C_4(1, \la) = -\Delta_{33}(\la), \\ \label{D21}
& \Delta_{21}(\la) = -U_2(S_4) = -C_3(1,\la) = -\Delta_{43}(\la), \\ \label{D31}
& \Delta_{31}(\la) = -S_4(0, \la), \quad \Delta_{41}(\la) = -S_4'(0, \la).
\end{align}
\end{lem}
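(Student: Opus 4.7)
The plan is to exploit two complementary tools: the Lagrange identity \eqref{wron} (which the preceding material has just introduced) and Cramer's rule applied to the expansion of $S_4$ in the fundamental system $\{C_k\}$. These two routes converge on the quantities $U_k(S_4)$, and matching them yields all six identities.

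First I would apply the Lagrange identity to the pairs $(S_4, C_k)$ for $k = 1, 2, 3, 4$ at the same spectral parameter $\la$. Since both solutions satisfy \eqref{eqv} with eigenvalue $\la$, formula \eqref{wron} with $\mu = \la$ gives $\langle S_4, C_k\rangle(0) = \langle S_4, C_k \rangle(1)$. At $x=1$, the initial conditions $V_s(S_4) = \de_{s,4}$ collapse the bracket to $\langle S_4, C_k\rangle(1) = C_k(1,\la)$. At $x=0$, the bracket formula \eqref{brack0} combined with $U_s(C_k) = \de_{sk}$ singles out a single term, producing the four relations
$$U_4(S_4) = C_1(1,\la), \quad U_3(S_4) = -C_2(1,\la), \quad U_2(S_4) = C_3(1,\la), \quad U_1(S_4) = -C_4(1,\la).$$
Since $U_3(y) = y(0)$ and $U_4(y) = y'(0)$, the last two of these directly yield $S_4(0,\la) = -C_2(1,\la)$ and $S_4'(0,\la) = C_1(1,\la)$, while the first and third already provide $U_1(S_4) = -\Delta_{33}(\la)$ and $-U_2(S_4) = -\Delta_{43}(\la)$ via the definitions in \eqref{D4}.

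Second, I would relate each $U_k(S_4)$ to $\Delta_{k1}(\la)$ by Cramer's rule. Expanding $S_4 = \sum_{k=1}^4 U_k(S_4)\, C_k$ (the coefficients are $U_k(S_4)$ because of the biorthogonality $U_s(C_k) = \de_{sk}$) and applying $V_r$ for $r = 1,\dots,4$ yields the $4 \times 4$ linear system $[V_r(C_k)](U_1(S_4),\dots,U_4(S_4))^T = (0,0,0,1)^T$. The determinant of the coefficient matrix is the quasi-derivative Wronskian $W[C_1,C_2,C_3,C_4](1)$; by Liouville's formula applied to the trace-free system \eqref{sys}, this Wronskian is constant in $x$, and at $x = 0$ the biorthogonality forces it to equal $(\det U)^{-1}$. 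A direct expansion of $U$ in \eqref{matrUV} gives $\det U = 1$, so the denominator in Cramer's rule is $1$. Each numerator, after expanding along the column containing the single nonzero right-hand side entry, is (up to a sign from row/column permutations) exactly one of the determinants $\Delta_{k1}(\la)$ displayed in \eqref{D1}--\eqref{D2}.

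Finally, I would combine the two computations: Cramer's rule together with the Lagrange-identity evaluations gives $\Delta_{11}(\la) = U_1(S_4)$, $\Delta_{21}(\la) = -U_2(S_4)$, $\Delta_{31}(\la) = -U_3(S_4)$, $\Delta_{41}(\la) = -U_4(S_4)$, and substituting the values found in the first step yields all the asserted identities \eqref{D11}--\eqref{D31}. The only genuine obstacle is bookkeeping: the definitions of $\Delta_{jk}$ order the rows in descending order $V_3, V_2, V_1$ and permute the columns nontrivially, so one must carefully track the signs coming from row and column swaps. Once the sign of $\det U$ and the sign arising from each permutation are pinned down, the identities fall out directly, without any need for asymptotic analysis or further structural results.
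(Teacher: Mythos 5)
Your proposal is correct and follows essentially the same route as the paper: the Cramer's-rule step (constancy of $\det C(x,\la)$ by Liouville--Ostrogradski, evaluated at $x=0$ via $U_s(C_k)=\de_{sk}$ and at $x=1$) is just a uniform repackaging of the paper's Wronskian $\Delta(x,\la)$ argument, and the Lagrange brackets $\langle S_4, C_k\rangle$ supply the remaining equalities exactly as in the paper. The signs you would need to track ($\det U = 1$, the row reversal $V_1,V_2,V_3 \mapsto V_3,V_2,V_1$, and the column permutations in \eqref{D1}--\eqref{D2}) all check out.
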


\begin{proof}
Consider the determinant
$$
\Delta(x, \la) := \begin{vmatrix}
                     C_2^{[3]}(x, \la) & C_3^{[3]}(x, \la) & C_4^{[3]}(x, \la) & S_4^{[3]}(x, \la) \\
                     C_2''(x, \la) & C_3''(x, \la) & C_4''(x, \la) & S_4''(x, \la) \\
                     C_2'(x, \la) & C_3'(x, \la) & C_4'(x, \la) & S_4'(x, \la) \\
                     C_2(x, \la) & C_3(x, \la) & C_4(x, \la) & S_4(x, \la)       
                  \end{vmatrix}.
$$
According to the Liouville-Ostrogradski formula, the Wronskian $\Delta(x, \la)$ of solutions of equation \eqref{eqv} does not depend on $x$. Using \eqref{initS} and \eqref{D1}, we easily get $\Delta(1, \la) = -\Delta_{11}(\la)$. On the other hand, taking linear combinations of the rows and applying the initial conditions \eqref{initC}, we obtain
$$
\Delta(0, \la) = \begin{vmatrix}
                     U_2(C_2) & U_2(C_3) & U_2(C_4) & U_2(S_4) \\
                     U_1(C_2) & U_1(C_3) & U_1(C_4) & U_1(S_4) \\
                     U_4(C_2) & U_4(C_3) & U_4(C_4) & U_4(S_4) \\
                     U_3(C_2) & U_3(C_3) & U_3(C_4) & U_3(S_4)
                  \end{vmatrix} = -U_1(S_4).
$$
Hence $\Delta_{11}(\la) = U_1(S_4)$. Next, consider the Lagrange bracket
$$
\langle S_4, C_4 \rangle = S_4^{[3]} C_4 - S_4'' C_4' + S_4' C_4'' - S_4 C_4^{[3]}.
$$
It follows from \eqref{wron} that 
$$
\langle S_4(x, \la), C_4(x, \la) \rangle_{|x = 0} = \langle S_4(x, \la), C_4(x, \la) \rangle_{|x = 1}.
$$
Using \eqref{bc}, \eqref{initC}, and \eqref{brack0} we get
\begin{align*}
\langle S_4(x, \la), C_4(x, \la) \rangle_{|x = 0} = & \, U_2(S_4) U_3(C_4) - U_1(S_4) U_4(C_4) \\ & + U_4(S_4) U_1(C_4) - U_3(S_4) U_2(C_4) = -U_1(S_4).
\end{align*}
Using \eqref{initS}, we obtain $\langle S_4(x, \la), C_4(x, \la) \rangle_{|x = 1} = C_4(1,\la)$. Hence $U_1(S_4) = -C_4(1,\la)$. Taking \eqref{D4} into account, we arrive at \eqref{D11}. The relations \eqref{D21} and \eqref{D31} are proved analogously.
\end{proof}

Next, we need estimates for $\Delta_{jk}(\la)$ as $|\la| \to \infty$. Divide the complex $\rho$-plane into the sectors
\begin{equation} \label{defGa}
\Gamma_{\xi} := \left\{ \rho \colon \frac{\pi(\xi-1)}{4} < \arg \rho < \frac{\pi \xi}{4}\right\}, \quad \xi = \overline{1,8}.
\end{equation}
Denote by $\{ \rho_{jk, l} \}_{l \ge 1}$ the zeros of the entire function $\Delta_{jk}(\rho^4)$ in the $\rho$-plane.
Introduce the regions
$$
G_{\de, jk\xi} := \{ \rho \in \overline{\Gamma}_{\xi} \colon |\rho - \rho_{jk,l}| \ge \de, \, l \ge 1 \}, \quad \de > 0.
$$

For a fixed sector $\Gamma_{\xi}$, denote by $\{ \om_l \}_{l = 1}^4$ the roots of the equation $\om^4 = 1$ numbered so that
$$
\mbox{Re} \, (\rho \om_1) < \mbox{Re} \, (\rho \om_2) < \mbox{Re} \, (\rho \om_3) < \mbox{Re} \, (\rho \om_4).
$$

\begin{prop}[\cite{Bond21, Bond22-asympt}] \label{prop:estD}
For each fixed $\xi \in \{ 1, 2, \dots, 8 \}$ and sufficiently large values of $|\rho|$, the following estimates hold:
\begin{align*}
\Delta_{jk}(\la) = O\left( \rho^{a_{jk}} \exp(\rho s_k)\right), \quad \rho \in \Gamma_{\xi}, \\
|\Delta_{jk}(\la)| \ge c_{\de} |\rho|^{a_{jk}} \exp(\mbox{Re}\, (\rho s_k)), \quad \rho \in G_{\de, jk \xi},
\end{align*}
where $\la = \rho^4$, $1 \le k \le j \le 4$, $c_{\de}$ is a constant which depends only on $\de$, and
$$
s_k := \sum\limits_{l = k+1}^4 \om_l, \quad a_{jk} := \sum_{l = 1}^{k-1} p_{l,0} + p_{j,0} + \sum_{l = 1}^{4-k} p_{l,1} - 6.
$$
\end{prop}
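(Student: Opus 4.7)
The proposition is taken from prior work \cite{Bond21, Bond22-asympt}, so the plan is to summarize the standard asymptotic-analysis strategy that yields such estimates, adapted to the specific linear forms $U_s$, $V_s$ of the problem $\mathcal L$.

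First, I would invoke the existence of a fundamental system of solutions $\{E_l(x,\rho)\}_{l=1}^4$ of equation \eqref{eqv} with Birkhoff-type asymptotics
\[
E_l^{[\nu]}(x,\rho) = (\rho\om_l)^{\nu} \exp(\rho \om_l x)\bigl(1 + O(\rho^{-1})\bigr), \quad \nu = 0,1,2,3,
\]
uniformly in $x \in [0,1]$, valid in any fixed sector $\Gamma_\xi$ from \eqref{defGa} under the ordering $\mbox{Re}(\rho\om_1) < \mbox{Re}(\rho\om_2) < \mbox{Re}(\rho\om_3) < \mbox{Re}(\rho\om_4)$. Such estimates, including those for the quasi-derivative $y^{[3]}$, are established in \cite{Bond22-asympt} for the Mirzoev-Shkalikov class containing our coefficients. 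From these and \eqref{defUV} one obtains
\[
U_s(E_l) = (\rho\om_l)^{p_{s,0}}\bigl(1 + O(\rho^{-1})\bigr), \qquad V_s(E_l) = (\rho\om_l)^{p_{s,1}} \exp(\rho\om_l)\bigl(1 + O(\rho^{-1})\bigr).
\]

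Next, I would represent each $C_k(x,\la)$ as a linear combination $C_k = \sum_l A_{kl}(\rho) E_l$ with the coefficient matrix $A(\rho) = G(\rho)^{-1}$, where $G(\rho) := [U_s(E_l)]_{s,l=1}^4$; this inversion is well posed because $\det G(\rho) \sim \rho^6 \cdot V(\om_1,\dots,\om_4)$ with a nonzero Vandermonde-type factor (the exponent $6 = \sum_s p_{s,0}$ and the distinctness of the sequence $(p_{s,0}) = (2,3,0,1)$ guarantee this). Cofactor expansion yields asymptotic formulas $A_{kl}(\rho) = c_{kl,\xi}\rho^{p_{k,0}-6}(1 + O(\rho^{-1}))$ with sector-dependent nonzero constants $c_{kl,\xi}$. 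Substituting into $\Delta_{kk}(\la) = \det[V_{5-s}(C_r)]_{s,r=k+1}^4$ and into the analogous formula for $\Delta_{jk}$, and expanding the $(4-k)\times(4-k)$ determinant multilinearly in the $E_l$'s, I would observe that among all resulting products the dominant term in $\Gamma_\xi$ corresponds to the unique pairing $r \mapsto l = r$ for $r \in \{k+1,\dots,4\}$; every other assignment either produces a vanishing Vandermonde minor or an exponent of strictly smaller real part. Collecting the exponentials yields the factor $\exp\bigl(\rho \sum_{l=k+1}^4 \om_l\bigr) = \exp(\rho s_k)$, while collecting the powers of $\rho$ from the cofactors of $G$ and from the entries $V_{5-s}(E_l)$ gives exactly the exponent $a_{jk} = \sum_{l=1}^{k-1}p_{l,0} + p_{j,0} + \sum_{l=1}^{4-k}p_{l,1} - 6$.

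The resulting leading-order identity $\Delta_{jk}(\la) = \rho^{a_{jk}}\exp(\rho s_k)\bigl(\ga_{jk,\xi} + O(\rho^{-1})\bigr)$ with a nonzero sector constant $\ga_{jk,\xi}$ immediately gives the claimed upper bound. For the lower estimate on $G_{\de,jk\xi}$, I would employ the standard minimum-modulus argument for entire functions of finite exponential type (cf.\ \cite{Bond21}): the normalized function $\rho \mapsto \rho^{-a_{jk}}\exp(-\rho s_k)\Delta_{jk}(\rho^4)$ is bounded in $\Gamma_\xi$ and tends to $\ga_{jk,\xi}\ne 0$, and its zeros are exactly $\{\rho_{jk,l}\}$; a Phragm\'en-Lindel\"of plus canonical-product argument then provides a uniform positive lower bound on the complement of the $\de$-neighborhoods of those zeros. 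The main technical obstacle is the combinatorial bookkeeping of Vandermonde-type cofactors together with verifying that every leading constant $\ga_{jk,\xi}$ is nonzero for all admissible triples $(j,k,\xi)$; this nonvanishing ultimately reduces to the distinctness of the exponents $(p_{s,0})$ and $(p_{s,1})$ and of the roots $\{\om_l\}$, guaranteeing that the relevant Vandermonde minors never degenerate.
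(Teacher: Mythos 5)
The paper does not prove Proposition~\ref{prop:estD} at all: it is imported verbatim from \cite{Bond21, Bond22-asympt}, so there is no internal argument to compare against. Your sketch follows exactly the route taken in those references (Birkhoff-type fundamental system $\{E_l\}$ valid in each sector $\Gamma_\xi$, expansion of the $C_k$ over it via the inverse of $G(\rho)=[U_s(E_l)]$, multilinear expansion of the determinants $\Delta_{jk}$ with the dominant pairing $l=r$ for $r=k+1,\dots,4$, and a standard lower-bound argument off the $\de$-neighbourhoods of the zeros), so as a reconstruction of the cited proof the strategy is the right one.

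Two concrete points need repair. First, your exponent for the expansion coefficients is wrong: since $A_{kl}$ is (up to sign) the ratio of the minor of $G$ obtained by deleting the row of order $\rho^{p_{k,0}}$ to $\det G\sim c\,\rho^{6}$, one gets $A_{kl}=O(\rho^{-p_{k,0}})$, not $c_{kl,\xi}\rho^{p_{k,0}-6}(1+O(\rho^{-1}))$. With your formula the powers of $\rho$ do not assemble into $a_{jk}$ (e.g.\ for $\Delta_{33}=C_4(1,\la)$ one needs $A_{4l}\sim\rho^{-1}$ to obtain $a_{33}=-1$, whereas $\rho^{p_{4,0}-6}=\rho^{-5}$); the correct exponent gives $\sum_{l=1}^{4-k}p_{l,1}-\sum_{r=k+1}^{4}p_{r,0}=a_{kk}$ as required. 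Second, the nonvanishing of the leading constants $\ga_{jk,\xi}$ does not ``reduce to the distinctness of the exponents and of the roots $\{\om_l\}$'': the minors that arise are generalized Vandermonde determinants $\det[\om_{l}^{e}]$ with complex nodes among the fourth roots of unity, which contain the pair $\{\om,-\om\}$, and such determinants can vanish even for distinct nodes and distinct exponents (e.g.\ nodes $\{1,-1\}$ with exponents $\{0,2\}$). The non-degeneracy must be verified minor by minor; for the present orders $(p_{s,0})=(2,3,0,1)$, $(p_{s,1})=(0,1,2,3)$ each relevant minor reduces to an elementary symmetric function of a proper subset of the fourth roots of unity times an honest Vandermonde factor and is indeed nonzero, but this check is precisely the substantive content of the proposition and cannot be waved away.
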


Recall that $m_{32}(\la)$ has the simple poles $\{ \la_n \}_{n \ge 1}$. Denote $\be_n := \Res_{\la = \la_n} m_{32}(\la)$. 

\begin{lem} \label{lem:m32}
The function $m_{32}(\la)$ can be uniquely recovered from $\{ \la_n, \be_n\}_{n \ge 1}$ by the formula
\begin{equation} \label{m32}
m_{32}(\la) = \sum_{n = 1}^{\infty} \frac{\be_n}{\la - \la_n},
\end{equation}
where the series converges absolutely and uniformly on compact sets in $\mathbb C \setminus \{ \la_n \}$.
\end{lem}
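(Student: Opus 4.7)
The plan is to prove the partial-fraction expansion \eqref{m32} by a standard Mittag-Leffler contour-integration argument, after extracting from Proposition~\ref{prop:estD} the required decay of $m_{32}$ at infinity.

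\emph{Step~1 (decay of $m_{32}$).} I would start from the representation $m_{32}(\la) = -\Delta_{32}(\la)/\Delta_{22}(\la)$ given by \eqref{mjk}. From \eqref{defp} one reads $(p_{1,0},\dots,p_{4,0}) = (2,3,0,1)$ and $(p_{1,1},\dots,p_{4,1}) = (0,1,2,3)$, so Proposition~\ref{prop:estD} gives the exponents $a_{32} = -3$ and $a_{22} = 0$, while the exponential factors $\exp(\rho s_2)$ coincide and cancel in the quotient. Thus on $G_{\de}^{(\xi)} := G_{\de,32,\xi} \cap G_{\de,22,\xi}$,
\begin{equation*}
|m_{32}(\la)| \le C_{\de} |\rho|^{-3} = C_{\de} |\la|^{-3/4}, \qquad \rho \in G_{\de}^{(\xi)},\ |\rho| \text{ large}.
\end{equation*}

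\emph{Step~2 (contour integration).} Since the zeros of $\Delta_{22}(\rho^4)$ are discrete and have a definite density in each sector $\Gamma_{\xi}$, I would choose radii $R_N \to \infty$ so that every circle $|\la| = R_N$ lies in $\bigcup_{\xi} G_{\de}^{(\xi)}$ for a fixed $\de > 0$. For $\la \notin \{\la_n\}$ with $|\la| < R_N/2$, Cauchy's residue theorem applied to $m_{32}(\mu)/(\mu-\la)$ on $|\mu|=R_N$ yields
\begin{equation*}
\frac{1}{2\pi i}\oint_{|\mu|=R_N}\frac{m_{32}(\mu)}{\mu-\la}\,d\mu = m_{32}(\la) - \sum_{|\la_n|<R_N}\frac{\be_n}{\la-\la_n}.
\end{equation*}
By Step~1 the left-hand side is $O(R_N^{-3/4})$ uniformly for $\la$ in any compact $K \subset \mathbb{C} \setminus \{\la_n\}$, so it tends to $0$ and the partial sums over $|\la_n|<R_N$ converge uniformly to $m_{32}(\la)$ on $K$.

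\emph{Step~3 (absolute convergence).} The point I anticipate will require the most care is upgrading Step~2 from conditional convergence of the partial sums indexed by $R_N$ to the absolute and uniform convergence asserted in the lemma. My plan is to run the same argument on $f(\la) := m_{32}(\la)/(\la - \la_\ast)$ for a fixed $\la_\ast \notin \{\la_n\}$, which decays as $O(|\la|^{-7/4})$ on the same circles and has only one extra simple pole at $\la_\ast$. The contour integral is then $O(R_N^{-7/4})$, and combining this with the classical asymptotics $|\la_n| \asymp n^4$ for eigenvalues of the fourth-order problem together with estimates for $\be_n = -\Delta_{32}(\la_n)/\dot{\Delta}_{22}(\la_n)$ derived from Proposition~\ref{prop:estD}, one obtains bounds sufficient for $\sum_n |\be_n|/|\la - \la_n| < \infty$ uniformly on $K$. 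Uniqueness of $m_{32}$ from the data $\{\la_n, \be_n\}_{n \ge 1}$ is then immediate from \eqref{m32}.
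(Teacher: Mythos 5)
Your proposal is correct and follows essentially the same route as the paper: the key estimate $|m_{32}(\rho^4)|\le c_\delta|\rho|^{-3}$ on $G_{\delta,22\xi}$ from Proposition~\ref{prop:estD}, combined with a Mittag-Leffler partial-fraction expansion and the asymptotics $\la_n\sim cn^4$, $\be_n=O(1)$. The only difference is presentational: the paper simply cites Mittag-Leffler's theorem and the asymptotic results of \cite{Bond22-asympt, Bond22-alg} for $\la_n$ and $\be_n$, whereas you unfold the contour-integration argument and propose to extract the bound on $\be_n$ directly from Proposition~\ref{prop:estD}.
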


\begin{proof}
Due to the asymptotical results of \cite{Bond22-asympt, Bond22-alg}, we have $\la_n \sim c n^4$, $\be_n = O(1)$ as $n \to \infty$. This implies the convergence of the series in \eqref{m32}. Next, using \eqref{mjk} and Proposition~\ref{prop:estD}, we obtain the estimate $|m_{32}(\rho^4)| \le c_{\de} |\rho|^{-3}$ for $\rho \in G_{\de,22\xi}$, $\xi = \overline{1,8}$. Hence, Mittag-Leffler's theorem implies the relation \eqref{m32}.
\end{proof}

\section{Proofs of Theorems~\ref{thm:Leib} and~\ref{thm:sd}} \label{sec:gen}

In this section, we obtain the Yurko-type uniqueness results for the problem $\mathcal L$. Theorem~\ref{thm:Leib} is proved by the reduction to Proposition~\ref{prop:uniqM}. In other words, we show that, under the separation condition $(\mathcal S)$, the three functions $m_{k+1,k}(\la)$, $k = 1, 2, 3$, uniquely specify the whole Weyl-Yurko matrix $M(\la)$. The proof of Theorem~\ref{thm:sd} is obtained by the general scheme of the method of spectral mappings. An analogous result is provided in \cite{Bond23-reg} for another type of boundary conditions. Nevertheless, for the sake of completeness, we outline the proof of Theorem~\ref{thm:sd} in this section, focusing on the uniqueness of the coefficients $a$, $b$, and $c$ from the boundary conditions \eqref{bc}.

\begin{proof}[Proof of Theorem~\ref{thm:Leib}]
Consider two problem $\mathcal L$ and $\tilde{\mathcal L} = \mathcal L(\tilde p, \tilde q, \tilde a, \tilde b, \tilde c)$, satisfying the separation condition $(\mathcal S)$.
Suppose that $m_{k+1,k}(\la) \equiv \tilde m_{k+1,k}(\la)$ for $k = 1, 2, 3$. Let us show that $M(\la) \equiv \tilde M(\la)$. 

Let $\la_0$ be a zero of $\Delta_{11}(\la)$ of multiplicity $r_1$. Using \eqref{relm31}, we find
$$
m_{31,\langle -k \rangle}(\la_0) = \sum_{j = k}^{r_1} m_{21,\langle-j\rangle}(\la_0) m_{32,\langle j-k \rangle}(\la_0), \quad k = \overline{1,r_1}.
$$
Hence
$$
m_{31,\langle -k \rangle}(\la_0) = \tilde m_{31,\langle -1 \rangle}(\la_0), \quad k = \overline{1,r_1},
$$
and so the function $(m_{31} - \tilde m_{31})$ is entire in $\la$. Using \eqref{mjk} and the estimates of Proposition~\ref{prop:estD}, we obtain
$$
|(m_{31} - \tilde m_{31})(\rho^4)| \le c_{\de} |\rho|^{-2}, \quad \rho \in G_{\de,11\xi}, \quad |\rho| \to \infty.
$$
Consequently, by Liouville's Theorem, $m_{31}(\la) \equiv \tilde m_{31}(\la)$. Analogously, we get $m_{42}(\la) \equiv \tilde m_{42}(\la)$.

Next, let $\la_0$ be a zero of $\Delta_{kk}(\la)$ of multiplicity $r_k$, $k \in \{ 1, 2, 3 \}$. Similarly to Lemma 2.5.1 in \cite{Yur02}, we obtain the relation
\begin{equation} \label{relPhik}
\Phi_k(x, \la) = \xi_k(x, \la) + \sum_{\nu = 1}^{r_k} \frac{c_{k\nu}}{(\la - \la_0)^{\nu}} \Phi_{k+1}(\la),
\end{equation}
where $\xi_k(x, \la)$ is regular at $\la = \la_0$ and $c_{k\nu} = m_{k+1,k,\langle -\nu \rangle}$. Note that $m_{k+1,k}(\la) \equiv \tilde m_{k+1,k}(\la)$ implies $c_{k\nu} = \tilde c_{k\nu}$. It follows from \eqref{relPhik} that
$$
m_{41}(\la) = U_4(\Phi_1) = U_4(\xi_1) + \sum_{\nu = 1}^{r_1} \frac{c_{1\nu}}{(\la - \la_0)^{\nu}} m_{42}(\la),
$$
so the function $(m_{41} - \tilde m_{41})$ is regular at $\la_0$. Hence, it is entire in $\la$. Proposition~\ref{prop:estD} yields the estimate 
$$
|(m_{41} - \tilde m_{41})(\rho^4)| \le c_{\de} |\rho|^{-1}, \quad \rho \in G_{\de,11\xi}, \quad |\rho| \to \infty.
$$
By Liouville's Theorem, $m_{41}(\la) \equiv \tilde m_{41}(\la)$. 
Thus, $M(\la) \equiv \tilde M(\la)$, which by Proposition \ref{prop:uniqM} implies $(p,q,a,b,c) = (\tilde p, \tilde q, \tilde a, \tilde b, \tilde c)$.
\end{proof}

\begin{proof}[Proof of Theorem~\ref{thm:sd}]
Consider two problems $\mathcal L$ and $\tilde {\mathcal L} = \mathcal L(\tilde p, \tilde q, \tilde a, \tilde b, \tilde c)$, satisfying the hypothesis of Theorem~\ref{thm:sd}. Suppose that $\Lambda = \tilde \Lambda$ and $\mathcal N(\la_0) = \tilde{\mathcal N}(\la_0)$ for all $\la_0 \in \Lambda$.

Introduce the matrix of spectral mappings
\begin{equation} \label{defP}
\mathcal P(x, \la) = \Phi(x, \la) \tilde \Phi^{-1}(x, \la).
\end{equation}

\begin{prop}[{\cite[Lemma~9]{Bond22-alg}}]
The matrix function $\mathcal P(x,\la)$ does not depend on $\la$.
\end{prop}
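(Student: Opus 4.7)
The plan is to prove that for each fixed $x \in [0,1]$ the matrix $\mathcal{P}(x,\lambda)$ is entire in $\lambda$ and remains bounded as $|\lambda|\to\infty$, whereupon Liouville's theorem forces it to be constant in $\lambda$. Using \eqref{relM} and its analog $\tilde\Phi = \tilde C\,\tilde M$, I would first rewrite
$$
\mathcal{P}(x,\lambda) = C(x,\lambda)\,\bigl[M(\lambda)\,\tilde M^{-1}(\lambda)\bigr]\,\tilde C^{-1}(x,\lambda).
$$
The matrix $C(x,\lambda)$ is entire in $\lambda$ by construction, and since $\operatorname{tr}(F(x)+\Lambda) \equiv 0$ in \eqref{sys}, the Liouville--Ostrogradski formula together with a direct calculation at $x=0$ gives $\det \tilde C(x,\lambda) \equiv 1$. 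Hence $\tilde C^{-1}(x,\lambda)$ is also entire in $\lambda$, and the only possible singularities of $\mathcal{P}(x,\lambda)$ lie in the shared pole set $\Lambda = \tilde\Lambda$ of $M\,\tilde M^{-1}$.

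Next I would show that $M(\lambda)\,\tilde M^{-1}(\lambda)$ is regular at every $\lambda_0 \in \Lambda$. Since the poles are simple, the unit lower-triangular form \eqref{matrM} of $M$ ensures that $M_{\langle 0\rangle}$ is invertible and $M_{\langle -1\rangle}$ is strictly lower triangular, hence $\mathcal{N}(\lambda_0) = M_{\langle 0\rangle}^{-1}M_{\langle -1\rangle}$ is nilpotent (and similarly for $\tilde M$). Matching Laurent coefficients in the identity $\tilde M\,\tilde M^{-1} = I$ at orders $(\lambda-\lambda_0)^{-2}$ and $(\lambda-\lambda_0)^{-1}$ yields
$$
(\tilde M^{-1})_{\langle -1\rangle} = -\tilde{\mathcal{N}}(\lambda_0)\,(\tilde M^{-1})_{\langle 0\rangle}, \qquad \tilde{\mathcal{N}}(\lambda_0)^2\,(\tilde M^{-1})_{\langle 0\rangle} = 0.
$$
Plugging these identities together with the hypothesis $\mathcal{N}(\lambda_0) = \tilde{\mathcal{N}}(\lambda_0)$ into the Laurent expansion of $M\,\tilde M^{-1}$, the $(\lambda-\lambda_0)^{-2}$ coefficient equals $-M_{\langle 0\rangle}\,\mathcal{N}^2\,(\tilde M^{-1})_{\langle 0\rangle} = 0$ and the $(\lambda-\lambda_0)^{-1}$ coefficient equals $M_{\langle 0\rangle}(\mathcal{N}-\tilde{\mathcal{N}})(\tilde M^{-1})_{\langle 0\rangle} = 0$. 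Thus $\mathcal{P}(x,\lambda)$ extends to an entire function of $\lambda$ for each fixed $x$.

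For the asymptotic bound, I would combine the sharp sectorial asymptotics for $C(x,\lambda)$ and $\tilde C^{-1}(x,\lambda)$ from \cite{Bond21,Bond22-asympt} with the bounds on the entries of $M$ and $\tilde M^{-1}$ obtained from \eqref{mjk} and Proposition~\ref{prop:estD}. The exponential factors $\exp(\rho s_k)$ produced by $C$ and $\tilde C^{-1}$ cancel sector by sector against the corresponding decays in $M$ and $\tilde M^{-1}$, leaving polynomial factors that combine to a uniform $O(1)$ bound outside a fixed compact set in $\lambda$. Since $\mathcal{P}(x,\lambda)$ is entire in $\lambda$ and bounded at infinity, Liouville's theorem, applied entrywise, implies that it does not depend on $\lambda$.

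The principal technical obstacle is the algebraic cancellation at the poles: although $\mathcal{N}(\lambda_0) = \tilde{\mathcal{N}}(\lambda_0)$ by hypothesis, the regular parts $M_{\langle 0\rangle}$ and $\tilde M_{\langle 0\rangle}$ generally differ, so the cancellation of principal parts is not automatic and relies on the identities extracted from $\tilde M\,\tilde M^{-1} = I$ combined with the nilpotency of $\tilde{\mathcal{N}}(\lambda_0)$ inherited from the strict lower-triangular residue structure. The sectorial asymptotic analysis is technical but standard within the method of spectral mappings, requiring careful bookkeeping of exponential growth and decay factors across the eight sectors $\Gamma_\xi$.
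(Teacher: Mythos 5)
The paper itself gives no proof of this proposition --- it is imported verbatim from \cite[Lemma~9]{Bond22-alg} --- and the argument there has exactly the architecture you propose: show that $\mathcal P(x,\cdot)$ is entire and bounded in $\la$, then apply Liouville's theorem. Your treatment of the entirety is correct and complete. The identities $(\tilde M^{-1})_{\langle -1\rangle}(\la_0) = -\tilde{\mathcal N}(\la_0)\,(\tilde M^{-1})_{\langle 0\rangle}(\la_0)$ and $\tilde{\mathcal N}(\la_0)^2 (\tilde M^{-1})_{\langle 0\rangle}(\la_0) = 0$ extracted from $\tilde M \tilde M^{-1} = I$, combined with the hypothesis $\mathcal N(\la_0) = \tilde{\mathcal N}(\la_0)$ and the nilpotency coming from the strictly lower-triangular residue, do annihilate the orders $-2$ and $-1$ of $M\tilde M^{-1}$; and $\det C(x,\la)$ is a nonzero constant by Liouville--Ostrogradski, so $\tilde C^{-1}(x,\la)$ is entire.

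The gap is in the boundedness step, and it is not merely a matter of missing detail: the cancellation mechanism you describe is not the one that actually operates. By Proposition~\ref{prop:estD} and \eqref{mjk}, the entries $m_{jk} = -\Delta_{jk}/\Delta_{kk}$ are ratios of functions carrying the \emph{same} exponential factor $\exp(\rho s_k)$, so $M(\la)$ and $\tilde M^{-1}(\la)$ are only \emph{polynomially} bounded on the regions $G_{\de,jk\xi}$ --- they contain no exponential decay capable of absorbing anything. Meanwhile every column of $C(x,\la)$ has a component of size $\exp(\mbox{Re}(\rho\om_4)x)$ and every row of $\tilde C^{-1}(x,\la)$ (whose entries are $3\times 3$ cofactors over a constant determinant) one of size $\exp(-\mbox{Re}(\rho\om_1)x)$, so bounding the four factors of $C\,[M\tilde M^{-1}]\,\tilde C^{-1}$ separately yields growth of order $\exp(\mbox{Re}(\rho(\om_4-\om_1))x)$, which is exponentially large. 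The cancellation only becomes visible after regrouping as $(CM)(\tilde M^{-1}\tilde C^{-1}) = \Phi\,\tilde\Phi^{-1}$: the Weyl solutions satisfy $\Phi_k^{[j]}(x,\la) = O(\rho^{\cdots}\exp(\rho\om_k x))$ in $G_{\de}$, and the $k$-th row of $\tilde\Phi^{-1}$ decays like $O(\rho^{\cdots}\exp(-\rho\om_k x))$ (this is obtained from the relation between $\tilde\Phi^{-1}$ and the Weyl solutions of the dual problem $(\tilde F^{\star},\tilde U^{\star},\tilde V^{\star})$ used in the proof of Lemma~\ref{lem:invM}); only then is each entry of $\mathcal P$ a sum of $O(1)$ products. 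A second, smaller issue: the resulting bound holds only on the sets $G_{\de,jk\xi}$, i.e., outside an infinite union of $\de$-disks around the zeros $\rho_{jk,l}$ accumulating at infinity --- not "outside a fixed compact set" --- so one must add a maximum-modulus argument on each excluded disk (legitimate, since entirety has already been established) before invoking Liouville. With these two repairs your proof coincides with the cited one.
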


Thus, $\mathcal P(x, \la) \equiv \mathcal P(x)$, and so \eqref{defP} implies
\begin{equation} \label{relP}
\tilde \Phi(x, \la) \mathcal P(x) = \Phi(x, \la).
\end{equation}
Recall that the matrix functions $\Phi(x, \la)$ and $\tilde \Phi(x, \la)$ satisfy the first-order systems of form \eqref{sys}: 
$$
\Phi'(x, \la) = (F(x) + \Lambda) \Phi(x, \la), \quad \tilde \Phi'(x, \la) = (\tilde F(x) + \Lambda) \tilde \Phi(x, \la), \quad x \in (0,1),
$$
where
\begin{equation} \label{structF}
F(x) = \begin{bmatrix}
            0 & 1 & 0 & 0 \\
            0 & 0 & 1 & 0 \\
            0 & p(x) & 0 & 1 \\
            -q(x) & 0 & 0 & 0
       \end{bmatrix}, \qquad
\tilde F(x) = \begin{bmatrix}
            0 & 1 & 0 & 0 \\
            0 & 0 & 1 & 0 \\
            0 & \tilde p(x) & 0 & 1 \\
            -\tilde q(x) & 0 & 0 & 0
       \end{bmatrix}.
\end{equation}
Therefore, we use \eqref{relP} to derive the relation
\begin{equation} \label{eqP}
\mathcal P'(x) + \mathcal P(x) \tilde F(x) = F(x) \mathcal P(x), \quad x \in (0,1),
\end{equation}
In addition, the relation \eqref{relP} together with the boundary conditions \eqref{bcPhi} imply that $\mathcal P(x)$ is a unit lower-triangular matrix. Under these requirements, the following proposition holds.

\begin{prop}[{\cite[Lemma 2.4]{Bond23-mmas}}] \label{prop:P}
Suppose that the matrices $F(x)$ and $\tilde F(x)$ have the structure \eqref{structF}.
Then, the relation \eqref{eqP} implies that $\mathcal P(x)$ identically equals to the unit matrix and so $p(x) = \tilde p(x)$, $q(x) = \tilde q(x)$ a.e. on $(0,1)$. 
\end{prop}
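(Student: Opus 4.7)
The plan is to exploit the triangular structure of $\mathcal P(x)$ by reading off the entries of the matrix identity $\mathcal P'(x) = F(x)\mathcal P(x) - \mathcal P(x)\tilde F(x)$ in a carefully chosen order. Since $\mathcal P$ is unit lower-triangular, $\mathcal P'(x)$ has zero entries on and above the main diagonal, while its strictly lower-triangular entries are $\mathcal P_{ij}'$ with $i > j$. The matrix equation therefore splits: positions $(i,j)$ with $i \le j$ yield purely algebraic identities among $\mathcal P_{ij}$, $p$, $\tilde p$, $q$, and $\tilde q$, whereas positions with $i > j$ are genuine first-order ODEs. The key idea is to harvest the algebraic identities first, so that the ODEs later cascade cleanly.

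A direct computation using \eqref{structF} shows that on the main diagonal one has $(F\mathcal P - \mathcal P\tilde F)_{11} = \mathcal P_{21}$, $(F\mathcal P - \mathcal P\tilde F)_{22} = \mathcal P_{32} - \mathcal P_{21}$, and $(F\mathcal P - \mathcal P\tilde F)_{33} = \mathcal P_{43} - \mathcal P_{32}$, while the entries above the diagonal reduce to tautologies. Each of these three expressions must vanish because $\mathcal P_{ii}\equiv 1$ forces $(\mathcal P')_{ii}\equiv 0$, and hence $\mathcal P_{21} \equiv \mathcal P_{32} \equiv \mathcal P_{43} \equiv 0$ without any integration; the entire subdiagonal of $\mathcal P$ is trivial for free.

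Feeding this information into the strictly lower-triangular equations propagates the cancellation. The $(2,1)$-entry $\mathcal P_{21}' = \mathcal P_{31}$ forces $\mathcal P_{31} \equiv 0$; the $(3,1)$-entry $\mathcal P_{31}' = p\mathcal P_{21} + \mathcal P_{41}$ then gives $\mathcal P_{41} \equiv 0$; the $(3,2)$-entry $\mathcal P_{32}' = p + \mathcal P_{42} - \mathcal P_{31} - \tilde p$ yields $\mathcal P_{42} = \tilde p - p$ a.e.; the $(4,3)$-entry $\mathcal P_{43}' = -\mathcal P_{42}$ combined with $\mathcal P_{43} \equiv 0$ forces $\mathcal P_{42} \equiv 0$ and therefore $p = \tilde p$ a.e.; finally the $(4,1)$-entry $\mathcal P_{41}' = \tilde q - q$ delivers $q = \tilde q$ a.e. The remaining $(4,2)$-entry is then automatically consistent, and we conclude $\mathcal P(x) \equiv I$.

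There is no serious analytic obstacle, since the entries of $\mathcal P$ are absolutely continuous by \eqref{relP} and the assumption $p, q \in L_1[0,1]$ lets us interpret every ODE above in the Carath\'eodory sense. The only real subtlety is the \emph{order} in which to process the identities: the four diagonal algebraic constraints must be exploited first, so that by the time one turns to the subdiagonal ODEs enough terms have already been zeroed out for the cascade $\mathcal P_{21}\to\mathcal P_{31}\to\mathcal P_{41}\to\mathcal P_{42}\to p=\tilde p\to q=\tilde q$ to close purely by substitution. If one began instead with the lower triangle, the same conclusion would still follow, but only after solving a coupled linear system, which the triangular ordering neatly avoids.
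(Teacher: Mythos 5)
Your proof is correct, and every entry you compute checks out: with $F\mathcal P-\mathcal P\tilde F$ evaluated against the structure \eqref{structF} and a unit lower-triangular $\mathcal P$, the diagonal positions give $\mathcal P_{21}=0$, then $\mathcal P_{32}=\mathcal P_{21}=0$, then $\mathcal P_{43}=\mathcal P_{32}=0$, and the subdiagonal positions then cascade exactly as you describe to $\mathcal P_{31}=\mathcal P_{41}=\mathcal P_{42}=0$, $p=\tilde p$, and $q=\tilde q$ a.e., with the $(4,2)$ position consistent. Note, however, that the paper itself contains no proof of this statement to compare against: it is imported as a black box from Lemma~2.4 of \cite{Bond23-mmas}, where it is established for a more general class of first-order systems associated with operators having distribution coefficients. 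So your contribution is a self-contained elementary verification of the special case actually needed here, namely the $4\times 4$ companion-type matrices \eqref{structF} with $p,q\in L_1[0,1]$. Two small points of hygiene: (i) you rely on $\mathcal P$ being unit lower-triangular, which is not part of the proposition's literal hypotheses but is established in the paper immediately before the proposition is invoked (from \eqref{relP} and \eqref{bcPhi}), so it is legitimately available; (ii) the diagonal relations give vanishing only a.e.\ at first, so one should say explicitly that the entries of $\mathcal P$ are absolutely continuous (hence continuous), which upgrades ``$=0$ a.e.''\ to ``$\equiv 0$'' --- you do flag the absolute continuity, so this is covered. Within the scope of this paper, your argument is a complete and arguably preferable replacement for the external citation.
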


Thus, we conclude from Proposition~\ref{prop:P} and the relation \eqref{relP} that $\Phi(x, \la) \equiv \tilde \Phi(x, \la)$. The boundary conditions \eqref{bcPhi} for $k = 3, 4$ imply
$$
a = -\Phi_4''(0,\la), \quad b = -\Phi_4^{[3]}(0, \la), \quad c = -(\Phi_3^{[3]} + b \Phi_3')(0,\la).
$$
Since $\Phi_k(x, \la) \equiv \tilde \Phi_k(x, \la)$ for $k = 3, 4$, then $a = \tilde a$, $b = \tilde b$, $c = \tilde c$, which concludes the proof.
\end{proof}

\section{Proof of Theorem~\ref{thm:uniq1}} \label{sec:uniq1}

In this section, we prove the uniqueness of solution for Inverse Problem~\ref{ip:main} under the separation condition $(\mathcal S)$. For this purpose, we establish the uniqueness of the recovery for the functions $m_{k+1,k}(\la)$, $k = 1, 2, 3$, from the spectral data $\{ \la_n, \ga_n, \xi_n \}_{n \ge 1}$, and so deduce Theorem~\ref{thm:uniq1} from Theorem~\ref{thm:Leib}. 

\begin{lem} \label{lem:findbg}
Suppose that $\Delta_{33}(\la_n) \ne 0$ for some $n \ge 1$. Then, the numbers $\be_n$ and $m_{43}(\la_n)$ are uniquely recovered from the spectral data $\la_n, \xi_n, \ga_n$ by the formulas:
\begin{equation} \label{findbg}
\be_n = -\gamma_n^2, \quad m_{43}(\la_n) = \frac{\xi_n}{\ga_n}.
\end{equation}
\end{lem}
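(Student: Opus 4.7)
The plan is to prove the two identities in (4.2) separately, treating the $m_{43}$ formula first and then the residue formula via the Lagrange identity.

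For the formula $m_{43}(\la_n) = \xi_n/\ga_n$, I would expand the eigenfunction $y_n$ in the fundamental system $\{ C_k(x, \la_n) \}_{k=1}^4$. Since $U_s(C_k) = \de_{sk}$, every solution of (1.1) at $\la = \la_n$ admits the expansion $y = \sum_{k=1}^4 U_k(y) C_k(x, \la_n)$. Applying this to $y_n$ and using $U_1(y_n) = U_2(y_n) = 0$, $U_3(y_n) = \ga_n$, $U_4(y_n) = \xi_n$, gives $y_n(x) = \ga_n C_3(x, \la_n) + \xi_n C_4(x, \la_n)$. The boundary condition $y_n(1) = 0$ then yields $\ga_n C_3(1, \la_n) + \xi_n C_4(1, \la_n) = 0$. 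Since by Lemma~3.2 we have $C_4(1, \la_n) = \Delta_{33}(\la_n) \ne 0$, if $\ga_n$ vanished then $\xi_n$ would too, contradicting $y_n \not\equiv 0$. Dividing by $\ga_n$ and invoking (2.12) together with Lemma~3.2 gives $\xi_n/\ga_n = -\Delta_{43}(\la_n)/\Delta_{33}(\la_n) = m_{43}(\la_n)$.

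For the formula $\be_n = -\ga_n^2$, the plan is to compute $\int_0^1 \Phi_2(x, \la) y_n(x) \, dx$ in two ways and compare residues at $\la = \la_n$. From $\Phi = CM$ and the lower-triangular structure (2.13), one has $\Phi_2(x, \la) = C_2(x, \la) + m_{32}(\la) C_3(x, \la) + m_{42}(\la) C_4(x, \la)$. Since the linear forms satisfy $V_1(\Phi_2) = V_2(\Phi_2) = U_1(\Phi_2) = 0$ and $U_2(\Phi_2) \equiv 1$ (a constant, with vanishing residue), the residue $R(x) := \Res_{\la = \la_n} \Phi_2(x, \la)$ satisfies all four boundary conditions of $\mathcal L$ and is therefore a scalar multiple of $y_n$. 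Matching the $C_3(x, \la_n)$-coefficient in $R(x) = \be_n C_3(x, \la_n) + [\Res_{\la_n} m_{42}] \, C_4(x, \la_n)$ against the expansion of $y_n$ above forces $R(x) = (\be_n/\ga_n) y_n(x)$.

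On the other hand, the Lagrange identity (3.11) yields $\int_0^1 \Phi_2(x, \la) y_n(x) \, dx = (\la - \la_n)^{-1} \langle \Phi_2, y_n \rangle \big|_0^1$. The boundary term at $x = 1$ vanishes because both functions satisfy $y(1) = y'(1) = 0$. At $x = 0$, formula (3.10) combined with $U_1(\Phi_2) = 0$, $U_2(\Phi_2) = 1$, $U_1(y_n) = U_2(y_n) = 0$ and $U_3(y_n) = \ga_n$ collapses the bracket to $\ga_n$. Hence $\int_0^1 \Phi_2(x, \la) y_n(x) \, dx = -\ga_n/(\la - \la_n)$, and taking the residue at $\la_n$ while using $\int_0^1 y_n^2 \, dx = 1$ yields $\be_n/\ga_n = -\ga_n$, i.e., $\be_n = -\ga_n^2$.

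The main subtlety I foresee is justifying that $\Res_{\la_n} \Phi_2$ lies in the one-dimensional eigenspace rather than merely being a solution of (1.1). The observation making this work is that $U_2(\Phi_2) \equiv 1$ is a constant in $\la$, so its residue vanishes automatically. The hypothesis $\Delta_{33}(\la_n) \ne 0$ is used twice: to guarantee $m_{43}$ is regular at $\la_n$ (so that the ratio $\xi_n/\ga_n$ is meaningful) and to ensure $\ga_n \ne 0$ in the denominators. Everything else is routine bookkeeping of boundary values, requiring no asymptotic estimates.
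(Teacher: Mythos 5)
Your proof is correct, and it rests on the same two pillars as the paper's argument --- the Lagrange identity \eqref{wron} applied to $\Phi_2$ paired with the eigenfunction, plus an identification of $\Res_{\la = \la_n} \Phi_2$ as a multiple of that eigenfunction --- but the execution differs at both steps. For $m_{43}(\la_n) = \xi_n/\ga_n$, the paper works with the Weyl solution $\Phi_3$: since $\Delta_{33}(\la_n) \ne 0$, the function $\Phi_3(x,\la)$ is regular at $\la_n$, and $\Phi_3(x,\la_n)$ is an eigenfunction with $\Phi_3(0,\la_n) = 1$ and $\Phi_3'(0,\la_n) = m_{43}(\la_n)$, which forces $\ga_n \ne 0$ and $\Phi_3(x,\la_n) = y_n(x)/\ga_n$. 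You instead expand $y_n = \ga_n C_3(\cdot,\la_n) + \xi_n C_4(\cdot,\la_n)$ and evaluate at $x = 1$; this is more elementary and equally valid, and your derivation of $\ga_n \ne 0$ from $C_4(1,\la_n) = \Delta_{33}(\la_n) \ne 0$ plays exactly the role that the regularity of $\Phi_3$ plays in the paper. For the residue of $\Phi_2$, the paper uses the inverse relation $C_2 = \Phi_2 - m_{32}\Phi_3 + m_{31}\Phi_4$ together with the regularity of $m_{31}$ and $\Phi_4$ at $\la_n$ (guaranteed by $\Delta_{11}(\la_n) = -\Delta_{33}(\la_n) \ne 0$) to read off $\Res_{\la=\la_n}\Phi_2 = \be_n \Phi_3(x,\la_n)$ directly, whereas you use the forward relation $\Phi_2 = C_2 + m_{32}C_3 + m_{42}C_4$, check that the residue satisfies all four boundary conditions, and invoke the simplicity of $\la_n$ together with the linear independence of $C_3$ and $C_4$ to match coefficients. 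Your route never needs $m_{31}$ or $\Phi_4$, at the cost of leaning on the one-dimensionality of the eigenspace (a standing assumption anyway); like the paper, it tacitly uses that $m_{32}$ and $m_{42}$ have at most simple poles at $\la_n$, which is stated in the paper just before Lemma~\ref{lem:m32}. The concluding Lagrange computations differ only by the normalization factor $\ga_n$: the paper pairs $\Phi_2$ with $\Phi_3(\cdot,\la_n)$ to get $-1/(\la-\la_n)$, you pair it with $y_n$ to get $-\ga_n/(\la-\la_n)$.
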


\begin{proof}
Consider the function $\Phi_3(x, \la)$. In view of \eqref{relM} and \eqref{mjk}, its poles coincide with the zeros of $\Delta_{33}(\la)$, so $\la_n$ is a regular points of $\Phi_3(x, \la)$.
Therefore, comparing \eqref{bc2} and \eqref{bcPhi}, we conclude that $\Phi_3(x, \la_n)$ is the eigenfunction of the problem $\mathcal L$ corresponding to the eigenvalue $\la_n$. Since
$$
\Phi_3(0, \la_n) = 1, \quad \Phi_3'(0, \la_n) = m_{43}(\la_n),
$$
then $\ga_n \ne 0$, $\Phi_3(x, \la_n) = \frac{1}{\ga_n} y_n(x)$, 
$m_{43}(\la_n) = \frac{\xi_n}{\ga_n}$.

The relation \eqref{wron} together with the boundary conditions \eqref{bcPhi} imply
$$
(\la - \la_n) \int_0^1 \Phi_2(x, \la) \Phi_3(x, \la_n) \, dx = \langle \Phi_2(x, \la), \Phi_3(x, \la_n) \rangle \big|_0^1 = -1.
$$

It follows from \eqref{relM} that $C(x, \la) = \Phi(x, \la) M^{-1}(\la)$. Using \eqref{invM}, we derive
\begin{equation} \label{relCPhi}
C_2(x, \la) = \Phi_2(x, \la) - m_{32}(\la) \Phi_3(x, \la) + m_{31}(\la) \Phi_4(x, \la).
\end{equation}
Recall that $\Phi_2(x, \la)$ and $m_{32}(\la)$ have a simple pole at $\la = \la_n$, and the other functions in \eqref{relCPhi} are entire in view of $\Delta_{11}(\la_n) = -\Delta_{33}(\la_n) \ne 0$.
Consequently, we get from \eqref{relCPhi} that
\begin{equation} \label{ResPhi2}
\Res_{\la = \la_n} \Phi_2(x, \la) = \be_n \Phi_3(x, \la_n).
\end{equation}

Combining \eqref{relCPhi} and \eqref{ResPhi2}, we arrive at the relation
$$
\int_0^1 \Phi_3^2(x, \la_n) \, dx = -\frac{1}{\be_n}.
$$
On the other hand, the equality $\Phi_3(x, \la_n) = \dfrac{1}{\ga_n} y_n(x)$ together with the condition $\int_0^1 y_n^2(x) \, dx = 1$ imply
$$
\int_0^1 \Phi_3^2(x, \la_n) \, dx = \frac{1}{\ga_n^2}.
$$
Hence $\be_n = -\ga_n^2$.
\end{proof}

\begin{remark} \label{rem:yn}
It follows from the proof of Lemma~\ref{lem:findbg} that, if $\la_n$ satisfies the hypothesis of the lemma, then any corresponding eigenfunction fulfills the condition $\int_0^1 y_n^2(x) \, dx \ne 0$.
\end{remark}

\begin{cor} \label{cor:findbg}
Under the separation condition $(\mathcal S)$, the spectral data $\{ \la_n, \ga_n, \xi_n \}_{n \ge 1}$ uniquely determine $\{ \be_n \}_{n \ge 1}$ and $\{ m_{43}(\la_n) \}_{n \ge 1}$ by the formulas \eqref{findbg}.
\end{cor}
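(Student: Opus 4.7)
The proof of Corollary~\ref{cor:findbg} is essentially a one-line deduction from Lemma~\ref{lem:findbg}, so my plan is simply to identify the hypothesis of that lemma under the separation condition and invoke it uniformly over all $n$.

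The plan is to recall that, as noted just after \eqref{bcjk}, the zeros of $\Delta_{22}(\la)$ are precisely the eigenvalues $\{\la_n\}_{n\ge 1}$ of $\mathcal{L} = \mathcal{L}_{22}$. The separation condition $(\mathcal S)$ applied with $k = 2$ states that $\Delta_{22}(\la)$ and $\Delta_{33}(\la)$ have no common zeros. Combining these two facts, I conclude that $\Delta_{33}(\la_n) \neq 0$ for every $n \ge 1$.

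With this uniform non-vanishing in hand, the hypothesis of Lemma~\ref{lem:findbg} is satisfied at each $\la_n$, so the formulas \eqref{findbg}, namely $\be_n = -\ga_n^2$ and $m_{43}(\la_n) = \xi_n/\ga_n$, hold for every $n \ge 1$. This directly yields the claimed uniqueness: given the spectral data $\{\la_n, \ga_n, \xi_n\}_{n \ge 1}$, the two sequences $\{\be_n\}_{n \ge 1}$ and $\{m_{43}(\la_n)\}_{n\ge 1}$ are determined by explicit formulas. (Note that Lemma~\ref{lem:findbg} already guarantees $\ga_n \neq 0$ whenever $\Delta_{33}(\la_n) \neq 0$, so the quotient $\xi_n/\ga_n$ is well defined.)

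There is no real obstacle here; the separation condition was precisely designed to exclude the exceptional situation $\Delta_{33}(\la_n) = 0$ that would make Lemma~\ref{lem:findbg} inapplicable. The proof consists merely of verifying that the hypothesis of the lemma holds at every $\la_n$ and then citing the lemma.
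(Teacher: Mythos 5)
Your proof is correct and matches the paper's (implicit) reasoning exactly: the separation condition with $k=2$ guarantees $\Delta_{33}(\la_n)\ne 0$ for every zero $\la_n$ of $\Delta_{22}(\la)$, so Lemma~\ref{lem:findbg} applies uniformly in $n$ and yields the formulas \eqref{findbg}. Nothing further is needed.
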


\begin{lem} \label{lem:m43}
Under the condition $(\mathcal S)$, the function $m_{43}(\la)$ is uniquely specified by $\{ \la_n \}_{n \ge 1}$ and $\{ m_{43}(\la_n) \}_{n \ge 1}$.
\end{lem}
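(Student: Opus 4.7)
I would argue by contradiction-style uniqueness: assume two problems $\mathcal L$ and $\tilde{\mathcal L}$ both satisfy the assumption of the lemma, share the spectrum $\{\la_n\}$, and agree in the values $m_{43}(\la_n) = \tilde m_{43}(\la_n)$; the goal is to conclude $m_{43} \equiv \tilde m_{43}$. The natural handle is Lemma~\ref{lem:aux1}, which identifies $m_{43}(\la) = -C_3(1,\la)/C_4(1,\la)$ as a quotient of entire functions.

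Using this, I would introduce the entire function
\[
N(\la) := C_3(1,\la)\, \tilde C_4(1,\la) \;-\; \tilde C_3(1,\la)\, C_4(1,\la).
\]
Since $(\mathcal S)$ holds for both $\mathcal L$ and $\tilde{\mathcal L}$, and since $C_4(1,\la_n) = -\Delta_{33}(\la_n) = \Delta_{11}(\la_n) \ne 0$ by Lemma~\ref{lem:aux1} (and analogously for $\tilde C_4$), the hypothesis $m_{43}(\la_n) = \tilde m_{43}(\la_n)$ gives $N(\la_n) = 0$ for every $n$. Because the zeros of $\Delta_{22}(\la)$ are precisely the $\la_n$ and are simple (by the assumption of Theorem~\ref{thm:uniq1}), the quotient
\[
E(\la) := \frac{N(\la)}{\Delta_{22}(\la)}
\]
is entire in $\la$.

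The central step is then to show $E \equiv 0$ via asymptotic control. Proposition~\ref{prop:estD} applied termwise yields, in every sector $\Gamma_\xi$, the upper bound $|N(\la)| = O\bigl(|\rho|^{-1}\exp(2\,\mbox{Re}(\rho\om_4))\bigr)$ together with the lower bound $|\Delta_{22}(\la)| \ge c_\de \exp\bigl(\mbox{Re}(\rho(\om_3+\om_4))\bigr)$ on $G_{\de, 22\xi}$, which produces only the coarse estimate $|E(\la)| = O\bigl(|\rho|^{-1}\exp(\mbox{Re}(\rho(\om_4-\om_3)))\bigr)$. To promote this into genuine decay, I would exploit the cancellation of the leading WKB coefficients of $C_3(1,\la)$ and $C_4(1,\la)$ inside the combination $N(\la)$: those principal coefficients depend only on the order of the equation, not on $(p,q,a,b,c)$, and so drop out of the difference $N$. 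Once the refined decay $|E(\la)| \to 0$ on every $G_{\de, 22\xi}$ is secured, a Phragm\'{e}n--Lindel\"{o}f argument propagates the decay through the small disks around the $\la_n$ excluded from $G_{\de, 22\xi}$, and Liouville's theorem then forces $E \equiv 0$. This yields $N \equiv 0$, hence $m_{43} \equiv \tilde m_{43}$.

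The main obstacle will be the refined asymptotic analysis in the last step. The raw bounds of Proposition~\ref{prop:estD} on their own leave $E$ of positive exponential type and in principle admit nontrivial entire solutions, so the argument must genuinely invoke the cancellation of the dominant exponential in $N(\la)$ coming from the universal leading asymptotics of the fundamental system $\{C_k(x,\la)\}$. This is the analogue, for a ratio that grows like $|\rho|$, of the sharp bound $|m_{32}(\rho^4)| \le c_\de |\rho|^{-3}$ that made the Mittag-Leffler argument in Lemma~\ref{lem:m32} essentially immediate; here the bookkeeping is more delicate because neither factor in $N$ decays individually, and the cancellation has to be extracted explicitly from the next-order asymptotics of $C_3(1,\la)$ and $C_4(1,\la)$.
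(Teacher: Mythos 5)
Your setup coincides with the paper's: you form the same entire function (your $N(\la)$ is, up to sign, the paper's $G(\la)=\Delta_{33}\tilde\Delta_{43}-\tilde\Delta_{33}\Delta_{43}$, since $\Delta_{33}=C_4(1,\la)$ and $\Delta_{43}=C_3(1,\la)$ by Lemma~\ref{lem:aux1}), you correctly get $N(\la_n)=0$ from $(\mathcal S)$, and you correctly identify that the raw bounds give only $|E(\rho^4)|=O\bigl(|\rho|^{-1}\exp(\mbox{Re}\,(\rho(\om_4-\om_3)))\bigr)$. The gap is in your final step. The proposed ``cancellation of the leading WKB coefficients'' cannot deliver decay of $E$ on every sector: writing $C_3(1,\la)=\sum_j a_j(\rho)e^{\rho\om_j}$ and $C_4(1,\la)=\sum_k b_k(\rho)e^{\rho\om_k}$, the coefficient of the dominant exponential $e^{2\rho\om_4}$ in $N$ is $a_4\tilde b_4-\tilde a_4 b_4$; the universal leading parts of $a_4$ and $b_4$ do cancel here, but the next-order corrections depend on $\int_0^1 p\,dx$, on $a$, $b$, etc., and differ between $\mathcal L$ and $\tilde{\mathcal L}$, so this coefficient is generically a nonzero $O(\rho^{-2})$ quantity rather than zero. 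The cancellation therefore improves only the algebraic prefactor, never the exponential factor $\exp(\mbox{Re}\,(\rho(\om_4-\om_3)))$, which is strictly growing in the interior of each sector. No Phragm\'en--Lindel\"of propagation through excluded disks can rescue a function that genuinely grows exponentially there, so ``$|E|\to 0$ on every $G_{\de,22\xi}$'' is false and Liouville cannot be invoked this way.

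The paper closes the argument differently, and this is the one idea your plan is missing: the coarse estimate already shows that $K=E$ is entire of order at most $\tfrac14$ in $\la$ (because $|\rho(\om_4-\om_3)|=O(|\la|^{1/4})$), and on the single ray $\arg\rho=\tfrac{\pi}{4}$ (equivalently $\la\to-\infty$ along the real axis) one has $\mbox{Re}\,(\rho(\om_4-\om_3))=0$, so $K(\la)\to 0$ there. An entire function of order less than $\tfrac12$ that is bounded on one ray is constant by the Phragm\'en--Lindel\"of principle (\cite[Corollary~5.1]{BFY14}), hence $K\equiv 0$. So you need no sector-wise decay at all --- only the order bound plus decay along one ray. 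Incidentally, this is exactly why the paper remarks that the method does not extend to orders higher than four: there the analogous function has order $1/n\ge 1/2$ and the one-ray trick fails. You should also make explicit that passing from $N(\la_n)=0$ to entirety of $N/\Delta_{22}$ uses the simplicity of the zeros of $\Delta_{22}$, which is a standing hypothesis in the context where the lemma is applied.
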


\begin{proof}
Consider two problems $\mathcal L$ and $\tilde {\mathcal L} = \mathcal L(\tilde p, \tilde q, \tilde a, \tilde b, \tilde c)$. Suppose that they both satisfy $(\mathcal S)$, $\la_n = \tilde \la_n$ and $m_{43}(\la_n) = \tilde m_{43}(\la_n)$ for all $n \ge 1$. Using \eqref{mjk}, we derive
$$
m_{43}(\la_n) - \tilde m_{43}(\la_n) = \frac{\Delta_{33}(\la_n) \tilde \Delta_{43}(\la_n) - \tilde \Delta_{33}(\la_n) \Delta_{43}(\la_n)}{\Delta_{33}(\la_n) \tilde \Delta_{33}(\la_n)} = 0, \quad n \ge 1.
$$
Hence, the function $G(\la) := \Delta_{33}(\la) \tilde \Delta_{43}(\la) - \tilde \Delta_{33}(\la) \Delta_{43}(\la)$ has zeros $\{ \la_n \}_{n \ge 1}$, so the function $K(\la) := \dfrac{G(\la)}{\Delta_{22}(\la)}$ is entire in $\la$. Applying Proposition~\ref{prop:estD}, we obtain the estimate
$$
|K(\rho^4)| = O\left( \rho^{-1} \exp(\mbox{Re} \, (\rho(\om_4 - \om_3)))\right), \quad |\rho| \to \infty, \quad \rho \in \overline{\Gamma}_{\xi}, \quad \xi = \overline{1,8}.
$$
In particular, for $\arg \rho = \frac{\pi}{4}$, we have $\mbox{Re}(\rho(\om_4 - \om_3)) = 0$ ($\xi = 1$ or $2$). Therefore, $K(\la)$ has the order $\frac{1}{4}$ and $K(\la) \to 0$ as $\la \to -\infty$, $\la \in \mathbb R$. Consequently, by Phragmen-Lindel\"of's theorem (see \cite[Corollary~5.1]{BFY14}), we conclude that $K(\la) \equiv 0$. Obviously, this implies $m_{43}(\la) \equiv \tilde m_{43}(\la)$.
\end{proof}

Thus, by virtue of Corollary~\ref{cor:findbg} and Lemmas \ref{lem:m32} and \ref{lem:m43}, the spectral data $\{ \la_n, \ga_n, \xi_n \}_{n \ge 1}$ uniquely specify the functions $m_{k+1,k}(\la)$, $k = 1, 2, 3$. Hence, Theorem~\ref{thm:Leib} readily implies Theorem~\ref{thm:uniq1}.

\begin{remark}
The central place in the proofs of Theorem~\ref{thm:uniq1} is taken by Lemma~\ref{lem:m43}. It is based on the construction of an entire function $K(\la)$ of order $\frac{1}{4}$, which tends to zero as $\la \to -\infty$ along the real line. This idea does not work for differential equations of even order higher than four. Thus, generalization of McLaughlin's problem to orders higher than four is an open question.
\end{remark}

\begin{remark}
In this case, we confine ourselves to the case of simple eigenvalues $\{ \la_n \}_{n \ge 1}$. In general, an eigenvalue of $\mathcal L$ can have two linearly independent eigenfunctions and/or associated functions. In the case of associated functions, the weight numbers $\ga_n$ and $\xi_n$ have to be defined in another way, e.g., analogously to the studies \cite{But07, BSY13} for the second-order equation \eqref{StL}. However, for the order four, the non-simplicity of the spectrum leads to significant technical difficulties, so we exclude this case from consideration.
\end{remark}

\section{Proof of Theorem~\ref{thm:uniq2}} \label{sec:uniq2}

In this section, we prove the second uniqueness theorem for McLaughlin's problem by obtaining the weight matrices $\{ \mathcal N(\la_0) \}_{\la_0 \in \Lambda}$ from the spectral data $\{ \la_n, \ga_n, \xi_n \}_{n \ge 1}$. For these purpose, we consider separately the cases (I)--(V) from \eqref{cases}. The proof relies on several auxiliary lemmas. The central part is taken by Lemma~\ref{lem:weight}, which presents the structure of the weight matrices for all the five cases. Furthermore, it is a challenge to determine for each given eigenvalue $\la_n$ to which case does it belong. A solution to this problem is provided by Algorithm~\ref{alg:case}.  In Lemmas~\ref{lem:case2}, \ref{lem:recD}, and \ref{lem:cases34}, we derive relations between the data $\{ \la_n, \ga_n, \xi_n \}_{n \ge 1}$ and non-zero elements of the weight matrices $\{ \mathcal N(\la_n) \}_{n \ge 1}$ and, finally, conclude the proof of Theorem~\ref{thm:uniq2}. 

\begin{lem} \label{lem:aux2}
In the cases (I)-(IV) of \eqref{cases}, the following relations hold:
\begin{align*}
(I) \colon & \quad C_4(1, \la_n) \ne 0, \\
(II) \colon & \quad C_4(1, \la_n) = C_3(1, \la_n) = 0, \quad C_4'(1, \la_n) \ne 0, \\
(III) \colon & \quad C_4(1, \la_n) = C_4'(1, \la_n) = 0, \quad C_3(1, \la_n) \ne 0, \\
(IV) \colon & \quad C_4(1, \la_n) = C_3(1, \la_n) = C_4'(1, \la_n) = 0.
\end{align*}
\end{lem}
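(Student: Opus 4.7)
My plan is to exploit two algebraic identities: the representation of $y_n$ in the basis $\{C_3(\cdot, \la_n), C_4(\cdot, \la_n)\}$, and the relations of Lemma~\ref{lem:aux1} that convert $U_1(S_4(\cdot, \la_n)), U_2(S_4(\cdot, \la_n))$ into $-C_4(1, \la_n), -C_3(1, \la_n)$. First I would observe that every eigenfunction $y_n$ lies in the span of $C_3(\cdot, \la_n)$ and $C_4(\cdot, \la_n)$: since $U_1(y_n) = U_2(y_n) = 0$, and the Cauchy data of $C_3, C_4$ at $x = 0$ are $(1,0)$ and $(0,1)$ respectively, we must have
$$ y_n(x) = \ga_n C_3(x, \la_n) + \xi_n C_4(x, \la_n). $$
Feeding the terminal conditions $y_n(1) = y_n'(1) = 0$ into this expansion yields the two linear identities
\begin{equation} \label{pf:aux2:key}
\ga_n C_3(1, \la_n) + \xi_n C_4(1, \la_n) = 0, \qquad \ga_n C_3'(1, \la_n) + \xi_n C_4'(1, \la_n) = 0,
\end{equation}
which will drive the rest of the argument; note also that non-triviality of $y_n$ means $(\ga_n, \xi_n) \ne (0,0)$.

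Next, the four cases can be settled by combining two simple observations. (a) When $\ga_n = 0$ (cases (III) and (IV)), the expansion collapses to $y_n = \xi_n C_4(\cdot, \la_n)$ with $\xi_n \ne 0$, and this immediately yields $C_4(1, \la_n) = C_4'(1, \la_n) = 0$. (b) When $V_3(y_n) = y_n''(1) = 0$ (cases (II) and (IV)), the eigenfunction satisfies the three right-endpoint conditions $y_n(1) = y_n'(1) = y_n''(1) = 0$, so $y_n$ must be proportional to $S_4(\cdot, \la_n)$; hence $S_4(\cdot, \la_n)$ is itself an eigenfunction, which forces $U_1(S_4(\cdot, \la_n)) = U_2(S_4(\cdot, \la_n)) = 0$ and, by Lemma~\ref{lem:aux1}, $C_4(1, \la_n) = C_3(1, \la_n) = 0$. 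Together, (a) and (b) yield all the vanishing assertions in cases (II), (III), and (IV).

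Finally, the non-vanishing statements will follow by contradiction using the simplicity of $\la_n$ (one-dimensional eigenspace). In case (I), if $C_4(1, \la_n) = 0$, then \eqref{pf:aux2:key} together with $\ga_n \ne 0$ gives $C_3(1, \la_n) = 0$; by Lemma~\ref{lem:aux1} the solution $S_4(\cdot, \la_n)$ then satisfies $U_1 = U_2 = 0$ and is thus an eigenfunction (non-trivial because $S_4^{[3]}(1, \la_n) = 1$); simplicity forces $S_4(\cdot, \la_n) \propto y_n$, so $V_3(y_n) \propto S_4''(1, \la_n) = V_3(S_4(\cdot, \la_n)) = 0$, contradicting (I). The identical scheme rules out $C_3(1, \la_n) = 0$ in case (III). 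In case (II), if one further assumes $C_4'(1, \la_n) = 0$, the second identity of \eqref{pf:aux2:key} and $\ga_n \ne 0$ force $C_3'(1, \la_n) = 0$; combined with $C_3(1, \la_n) = C_4(1, \la_n) = 0$ already obtained from step (b), both $C_3(\cdot, \la_n)$ and $C_4(\cdot, \la_n)$ become eigenfunctions, producing a two-dimensional eigenspace and contradicting simplicity. The only real subtlety is the bookkeeping between the ``left'' representation via $C_3, C_4$ and the ``right'' representation via $S_4$; Lemma~\ref{lem:aux1} is precisely the bridge between them, and simplicity of $\la_n$ is invoked to preclude any accidental extra vanishings that would collapse the cases into one another.
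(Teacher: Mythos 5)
Your proof is correct and rests on the same two pillars as the paper's: Lemma~\ref{lem:aux1} to translate $U_1(S_4)=U_2(S_4)=0$ into $C_4(1,\la_n)=C_3(1,\la_n)=0$ (so that $V_3(y_n)=0$ is equivalent to $S_4(\cdot,\la_n)$ being the eigenfunction), and the simplicity of $\la_n$ to force proportionality and rule out the extra vanishings. The only difference is organizational — you argue forward from each case via the expansion $y_n=\ga_n C_3+\xi_n C_4$, whereas the paper argues contrapositively from $C_4(1,\la_n)=0$ using the factorization of $\Delta_{22}$, which amounts to the same linear algebra.
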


\begin{proof}
Suppose that $C_4(1, \la_n) = 0$. Then, since $\Delta_{22}(\la_n) = 0$, we conclude from the formula for $\Delta_{22}(\la)$ in \eqref{D3} that $C_3(1, \la_n) = 0$ or $C_4'(1, \la_n) = 0$. If $C_3(1, \la_n) = 0$, then Lemma~\ref{lem:aux1} implies that $U_1(S_4) = U_2(S_4) = 0$ at $\la = \la_n$. Hence $S_4(x, \la_n)$ is the eigenfunction corresponding to $\la_n$. Since $\la_n$ is a simple eigenvalue, then this eigenfunction is unique up to a constant multiplier. Hence $V_3(y_n) = 0$, so we have either the case (II) or the case (IV). If $C_4'(1, \la_n) = 0$, then $C_4(x, \la_n)$ is the eigenfunction, so $U_3(y_n) = 0$. This corresponds to the cases (III) and (IV). Consequently, $C_4(1, \la_n) = 0$ implies $U_3(y_n) = 0$ or $V_3(y_n) = 0$. Hence, $C_4(1, \la_n) \ne 0$ in the case (I). This concludes the proof.
\end{proof}

Lemmas \ref{lem:aux1} and \ref{lem:aux2} imply the following corollary.

\begin{cor} \label{cor:sep}
In the case (I) of \eqref{cases}, the eigenvalue $\la_n$ satisfies the separation condition, that is, $\Delta_{11}(\la_n) \ne 0$ and $\Delta_{33}(\la_n) \ne 0$. In the cases (II)-(IV), on the contrary, $\Delta_{11}(\la_n) = \Delta_{33}(\la_n) = 0$.
\end{cor}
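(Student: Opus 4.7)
The plan is to read off the corollary directly from the two preceding lemmas. By \eqref{D11} in Lemma~\ref{lem:aux1}, we have the identity
\[
\Delta_{11}(\la) \;=\; -C_4(1,\la) \;=\; -\Delta_{33}(\la),
\]
so the simultaneous vanishing or non-vanishing of $\Delta_{11}(\la_n)$ and $\Delta_{33}(\la_n)$ is governed by a single scalar quantity, namely $C_4(1,\la_n)$. Thus the corollary reduces to tracking $C_4(1,\la_n)$ through the four cases.

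For case (I), Lemma~\ref{lem:aux2} asserts $C_4(1,\la_n) \ne 0$, and substituting into the displayed identity gives $\Delta_{11}(\la_n) \ne 0$ and $\Delta_{33}(\la_n) \ne 0$, which is exactly the separation condition at $\la_n$. For cases (II)-(IV), the same lemma asserts $C_4(1,\la_n) = 0$, whence both $\Delta_{11}(\la_n)$ and $\Delta_{33}(\la_n)$ vanish. So the proof is a single substitution.

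There is no real obstacle here: all the content has already been absorbed into Lemmas~\ref{lem:aux1} and~\ref{lem:aux2}. The role of this corollary is purely organizational --- it repackages the case analysis of Lemma~\ref{lem:aux2} in the language of the characteristic determinants $\Delta_{11}$ and $\Delta_{33}$, which is the language used downstream in Section~\ref{sec:uniq2} when analyzing the structure of the weight matrices $\mathcal N(\la_0)$ and when invoking Theorem~\ref{thm:Leib}. Accordingly, the proof needs only to cite Lemmas~\ref{lem:aux1} and~\ref{lem:aux2} and apply \eqref{D11}, with no further computation.
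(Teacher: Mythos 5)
Your proof is correct and matches the paper exactly: the paper states that Lemmas~\ref{lem:aux1} and~\ref{lem:aux2} imply the corollary, which is precisely the substitution of the case analysis for $C_4(1,\la_n)$ into the identity $\Delta_{11}(\la) = -C_4(1,\la) = -\Delta_{33}(\la)$ from \eqref{D11}.
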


\begin{lem} \label{lem:weight}
Suppose that $\Delta_{11}(\la)$ and $\Delta_{22}(\la)$ have only simple zeros. Then, the weight matrices in the cases (I)--(IV) of \eqref{cases} have the following structure:
\begin{align} \nonumber
(I) \colon & \quad \mathcal N(\la_n) = \begin{bmatrix}
                                            0 & 0 & 0 & 0 \\
                                            0 & 0 & 0 & 0 \\
                                            0 & n_{32}(\la_n) & 0 & 0 \\
                                            0 & 0 & 0 & 0 
                                       \end{bmatrix}, \quad n_{32}(\la_n) = \be_n \ne 0, \\ \nonumber
(II) \colon & \quad \mathcal N(\la_n) = \begin{bmatrix}
                                            0 & 0 & 0 & 0 \\
                                            0 & 0 & 0 & 0 \\
                                            n_{31}(\la_n) & n_{32}(\la_n) & 0 & 0 \\
                                            n_{41}(\la_n) & n_{42}(\la_n) & 0 & 0 
                                        \end{bmatrix}, \quad
                                        \begin{vmatrix}
                                            n_{31} & n_{32} \\
                                            n_{41} & n_{42}
                                        \end{vmatrix}(\la_n) = 0, 
                                        \quad n_{31} n_{32} n_{41} n_{42} \ne 0, \\ \label{rel2}
                                        & 
                                        n_{31}(\la_n) = -n_{42}(\la_n) = m_{31,\langle -1 \rangle}(\la_n), \quad
                                        n_{41}(\la_n) = m_{41,\langle-1\rangle}(\la_n) - m_{43}(\la_n) n_{31}(\la_n), \\ \nonumber
(III) \colon & \quad \mathcal N(\la_n) = \begin{bmatrix}
                                            0 & 0 & 0 & 0 \\
                                            n_{21}(\la_n) & 0 & 0 & 0 \\
                                            0 & 0 & 0 & 0 \\
                                            n_{41}(\la_n) & 0 & n_{43}(\la_n) & 0
                                        \end{bmatrix}, 
                                        \quad n_{21}(\la_n) = n_{43}(\la_n) = m_{43,\langle -1 \rangle}(\la_n), \\ \label{rel3} & \qquad \qquad n_{41}(\la_n) = m_{41,\langle -1 \rangle}(\la_n) - m_{42}(\la_n) m_{21,\langle -1 \rangle}(\la_n),         
                                        \\ \nonumber
(IV) \colon & \quad \mathcal N(\la_n) = \begin{bmatrix}
                                            0 & 0 & 0 & 0 \\
                                            0 & 0 & 0 & 0 \\
                                            0 & 0 & 0 & 0 \\
                                            n_{41}(\la_n) & 0 & 0 & 0 
                                        \end{bmatrix}, 
                                        \quad n_{41}(\la_n) = m_{41,\langle -1 \rangle}(\la_n), \\
                                        \nonumber
(V) \colon & \quad \mathcal N(\la_0) = \begin{bmatrix}
                                            0 & 0 & 0 & 0 \\
                                            n_{21}(\la_0) & 0 & 0 & 0 \\
                                            0 & 0 & 0 & 0 \\
                                            0 & 0 & n_{43}(\la_0) & 0
                                        \end{bmatrix}, \quad
                                        n_{21}(\la_0) = n_{43}(\la_0) = m_{43,\langle -1 \rangle}(\la_0) \ne 0.
\end{align}
\end{lem}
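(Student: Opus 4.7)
The strategy is to compute $\mathcal N(\la_0) = (M_{\langle 0\rangle}(\la_0))^{-1} M_{\langle -1\rangle}(\la_0)$ (see \eqref{defN}) by direct inversion and then simplify case by case. Since $M(\la)$ is unit lower triangular by \eqref{matrM}, so is $M_{\langle 0\rangle}(\la_0)$; introducing the shorthand $\nu_{jk} := m_{jk,\langle 0\rangle}(\la_0)$ and $\mu_{jk} := m_{jk,\langle -1\rangle}(\la_0)$, an explicit inversion gives closed-form expressions, and the only potentially non-zero entries of $\mathcal N(\la_0)$ are
\begin{gather*}
n_{21} = \mu_{21},\qquad n_{32} = \mu_{32},\qquad n_{43} = \mu_{43}, \\
n_{31} = \mu_{31} - \nu_{32}\mu_{21},\qquad n_{42} = \mu_{42} - \nu_{43}\mu_{32}, \\
n_{41} = \mu_{41} - \nu_{42}\mu_{21} + \nu_{43}\nu_{32}\mu_{21} - \nu_{43}\mu_{31}.
\end{gather*}

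Next, in each of the five cases I would determine which $\mu_{jk}$ actually vanish. By \eqref{mjk}, $m_{jk}$ can have a pole at $\la_0$ only when $\Delta_{kk}(\la_0) = 0$; combining this with Lemma \ref{lem:aux1} (which identifies $\Delta_{33} = -\Delta_{11} = C_4(1,\cdot)$, $\Delta_{43} = -\Delta_{21} = C_3(1,\cdot)$, $\Delta_{31}(\la) = -S_4(0,\la)$, $\Delta_{41}(\la) = -S_4'(0,\la)$) and with Lemma \ref{lem:aux2} (which lists the vanishings of $C_3(1,\la_n), C_4(1,\la_n), C_4'(1,\la_n)$) explicitly identifies the non-zero pattern of $(\mu_{jk})$ in each case. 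The collapse of the six formulas above to the claimed matrix form is then driven by the algebraic identities $m_{21} = m_{43}$ and $m_{31} + m_{42} = m_{21}m_{32}$ from Lemma \ref{lem:invM}: Laurent-expanded at $\la_0$, these yield $\mu_{21} = \mu_{43}$, $\nu_{21} = \nu_{43}$, the vanishing $\mu_{21}\mu_{32} = 0$ of the would-be double pole on the right-hand side, and $\mu_{31} + \mu_{42} = \mu_{21}\nu_{32} + \nu_{21}\mu_{32}$. Substituting these back produces, for example, $n_{42} = 0$ in case (I), $n_{31} = 0$ in case (III), and the relation $n_{31} = -n_{42} = \mu_{31}$ in case (II).

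The main obstacle will be the two algebraic identities that do not follow from Lemma \ref{lem:invM} alone: the vanishing $n_{41} = 0$ in case (V), and the rank-one condition $n_{31}n_{42} - n_{32}n_{41} = 0$ in case (II). Both reduce, after specialization via Lemma \ref{lem:aux2}, to the Pl\"ucker-type relation
$$
C_1(1,\la)C_4'(1,\la) - C_2(1,\la)C_3'(1,\la) + C_3(1,\la)C_2'(1,\la) - C_4(1,\la)C_1'(1,\la) = 0.
$$
I plan to derive this identity from the formal self-adjointness of $\mathcal L$: since the Lagrange bracket $\langle C_j, C_k\rangle$ is independent of $x$, the matrix $C(x,\la)^T J_0 C(x,\la)$ is also $x$-independent; at $x = 0$ it equals $(U^{-1})^T J_0 U^{-1}$, and a direct computation from \eqref{matrUV} and \eqref{defJ} shows this expression equals $J_0$, whence on inverting also $C(x,\la) J_0 C(x,\la)^T \equiv J_0$, whose $(1,2)$-entry at $x = 1$ is precisely the displayed relation. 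The non-vanishing assertions (such as $\be_n = -\ga_n^2 \ne 0$ in case (I), via Lemma \ref{lem:findbg}, and $n_{31}n_{32}n_{41}n_{42} \ne 0$ in case (II)) are verified case by case using the explicit description of $y_n$: proportional to $S_4(\cdot,\la_n)$ in cases (II) and (IV), to $C_4(\cdot,\la_n)$ in cases (III) and (IV), forcing the non-vanishing of $\ga_n$ or $\xi_n$ as appropriate.
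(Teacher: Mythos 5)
Your proposal is correct and follows the paper's skeleton — element-wise solution of $M_{\langle 0\rangle}\mathcal N = M_{\langle -1\rangle}$, the symmetries $m_{21}=m_{43}$ and \eqref{relm31} Laurent-expanded to give $n_{21}=n_{43}$ and $n_{31}=-n_{42}$, then a case-by-case analysis of which residues vanish via Lemmas~\ref{lem:aux1} and~\ref{lem:aux2} — but it handles the two genuinely nontrivial identities by a different and more unified route. The paper proves the rank-one condition in case (II) by the explicit determinant manipulation \eqref{sm4}, which isolates the factor $\bigl(C_4''C_3'-C_3''C_4'\bigr)(1,\la_n)$ and kills it with the Lagrange bracket $\langle C_3,C_4\rangle\equiv 0$ (i.e.\ the $(3,4)$-entry of $C^TJ_0C\equiv J_0$), and it obtains the case (V) structure (in particular $n_{41}=0$) by citing Lemma~4 of \cite{Bond22-alg}. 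You instead derive both from the single dual identity $CJ_0C^T\equiv J_0$, whose $(1,2)$-entry $C_1C_4'-C_2C_3'+C_3C_2'-C_4C_1'\equiv 0$ at $x=1$ does the job: I checked that for case (V) it yields exactly the required divisibility of $\Delta_{41}\Delta_{22}+\Delta_{42}\Delta_{21}$ by $\Delta_{11}$, and that $(U^{-1})^TJ_0U^{-1}=J_0$ indeed holds for the matrix \eqref{matrUV}, so the derivation of the identity is sound. This makes the case (V) argument self-contained where the paper relies on an external reference. One small point to make explicit when you write this up: for case (II) the reduction of $\Delta_{31}\Delta_{42}-\Delta_{41}\Delta_{32}$ to the $(1,2)$ Pl\"ucker entry is not purely a specialization via Lemma~\ref{lem:aux2}; you also need the identifications $\Delta_{31}=-S_4(0,\cdot)=C_2(1,\cdot)$ and $\Delta_{41}=-S_4'(0,\cdot)=-C_1(1,\cdot)$ (the first equalities are \eqref{D31}, the second follow from the constancy of $\langle S_4,C_k\rangle$), after which the expression collapses to $C_2(C_2C_3'-C_1C_4')(1,\la_n)$ and your identity applies. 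Your non-vanishing argument in case (II) — $n_{31}=\mu_{31}=\al_n\ga_n/\dot\Delta_{11}(\la_n)\ne 0$ directly from $S_4(\cdot,\la_n)=\al_n y_n$ and $\ga_n\ne 0$, then $n_{41}n_{32}=n_{31}n_{42}\ne 0$ — is also more direct than the paper's contradiction argument.
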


\begin{proof}
Let $\la_0$ be a pole of $M(\la)$. Then, the definition \eqref{defN} of the weight matrices implies
$$
M_{\langle 0 \rangle}(\la_0) \mathcal N(\la_0) = M_{\langle -1 \rangle}(\la_0).
$$
Taking the structure of the Weyl matrix \eqref{matrM} into account, we can write in the element-wise form that
$$
\begin{bmatrix}
1 & 0 & 0 & 0 \\
m_{21, \langle 0 \rangle} & 1 & 0 & 0 \\
m_{31, \langle 0 \rangle} & m_{32, \langle 0 \rangle} & 1 & 0 \\
m_{41, \langle 0 \rangle} & m_{42, \langle 0 \rangle} & m_{43, \langle 0 \rangle} & 1
\end{bmatrix}
\begin{bmatrix}
0 & 0 & 0 & 0 \\
n_{21} & 0 & 0 & 0 \\
n_{31} & n_{32} & 0 & 0 \\
n_{41} & n_{42} & n_{43} & 0
\end{bmatrix} = 
\begin{bmatrix}
0 & 0 & 0 & 0 \\
m_{21, \langle -1 \rangle} & 0 & 0 & 0 \\
m_{31, \langle -1 \rangle} & m_{32, \langle -1 \rangle} & 0 & 0 \\
m_{41, \langle -1 \rangle} & m_{42, \langle -1 \rangle} & m_{43, \langle -1 \rangle} & 0
\end{bmatrix}.
$$
Here and below, the argument $\la_0$ is omitted for brevity. Then, we find
\begin{gather} \label{n21}
n_{21} = m_{21, \langle -1 \rangle}, \quad n_{32} = m_{32, \langle -1 \rangle}, \quad n_{43} = m_{43,\langle -1 \rangle}, \\ \label{n31}
n_{31} = m_{31, \langle -1 \rangle} - m_{32,\langle 0 \rangle} m_{21, \langle -1 \rangle}, \quad
n_{42} = m_{42,\langle -1 \rangle} - m_{43,\langle 0 \rangle} m_{32,\langle -1 \rangle}, \\ \label{n41}
n_{41} = m_{41, \langle -1 \rangle} - m_{42,\langle 0 \rangle} n_{21} - m_{43,\langle 0 \rangle} n_{31}.
\end{gather}

Since $m_{21}(\la) \equiv m_{43}(\la)$, the relations \eqref{n21} imply $n_{21} = n_{43}$.
Next, calculating the residue of the left-hand side in \eqref{relm31} at $\la = \la_0$, we obtain
\begin{equation} \label{aux1}
m_{31, \langle -1 \rangle} - m_{21, \langle -1 \rangle} m_{32, \langle 0 \rangle} - m_{21,\langle 0 \rangle} m_{32, \langle -1 \rangle} + m_{42, \langle -1 \rangle} = 0.
\end{equation}

Using \eqref{aux1} together with \eqref{n31} and taking the relation $m_{21}(\la) \equiv m_{43}(\la)$ into account, we conclude that $n_{31} = -n_{42}$.

Now, let us separately consider the cases (I)--(V).

\smallskip

\textit{Case (I).} Due to Corollary~\ref{cor:sep}, $\Delta_{11}(\la_n) \ne 0$ and $\Delta_{33}(\la_n) \ne 0$. Therefore, in view of \eqref{mjk}, we have $m_{j1, \langle -1 \rangle}(\la_n) = 0$ for $j = 2, 3, 4$ and $m_{43, \langle -1 \rangle}(\la_n) = 0$. Then, the relations \eqref{n21}--\eqref{n41} imply that $n_{j1} = 0$ for $j = 2, 3, 4$ and $n_{43} = 0$. Since $n_{42} = -n_{31}$, we also have $n_{42} = 0$. Thus, there is the only non-zero element $n_{32} = m_{32,\langle -1 \rangle}$.

\smallskip

\textit{Case (II).} By Lemma~\ref{lem:aux2}, we have 
\begin{equation} \label{sm2}
C_3(1,\la_n) = C_4(1,\la_n) = 0. 
\end{equation}
Therefore, Lemma~\ref{lem:aux1} implies that 
$$
\Delta_{11}(\la_n) = \Delta_{21}(\la_n) = \Delta_{33}(\la_n) = \Delta_{43}(\la_n) = 0.
$$ 
Since $\Delta_{11}(\la)$ and $\Delta_{33}(\la)$ have only simple zeros, we get from \eqref{mjk} that $m_{21,\langle -1 \rangle}(\la_n) = m_{43,\langle -1 \rangle}(\la_n) = 0$. Together with \eqref{n21}, this yields $n_{21} = n_{43} = 0$. Therefore, the relations \eqref{n31} and \eqref{n41} imply \eqref{rel2}.

Let us show that
\begin{equation} \label{det0}
\begin{vmatrix}
n_{31} & n_{32} \\
n_{41} & n_{42}
\end{vmatrix} = 0.
\end{equation}
Using \eqref{n31} and \eqref{n41}, we derive
\begin{equation} \label{sm3}
\begin{vmatrix}
n_{31} & n_{32} \\
n_{41} & n_{42}
\end{vmatrix} = m_{31,\langle -1 \rangle} m_{42, \langle -1 \rangle} - m_{41,\langle -1 \rangle} m_{32, \langle -1 \rangle}.
\end{equation}
Using \eqref{D2}--\eqref{D4} and taking \eqref{sm2} into account, we obtain
\begin{multline} \label{sm4}
\Delta_{31}(\la_n) \Delta_{42}(\la_n) - \Delta_{41}(\la_n) \Delta_{32}(\la_n) =  
\left( C_4'' \begin{vmatrix} C_2' & C_1' \\ C_2 & C_1 \end{vmatrix} - C_4' \begin{vmatrix} C_2'' & C_1'' \\ C_2 & C_1 \end{vmatrix} \right) C_3' C_2 \\  - 
\left( C_3' \begin{vmatrix} C_2'' & C_1'' \\ C_2 & C_1 \end{vmatrix} - C_3'' \begin{vmatrix} C_2' & C_1' \\ C_2 & C_1 \end{vmatrix}\right) (-C_2 C_4') 
= C_2 \begin{vmatrix} C_4'' & C_3'' \\ C_4' & C_3' \end{vmatrix} \begin{vmatrix} C_2' & C_1' \\ C_2 & C_1 \end{vmatrix},
\end{multline}
where the arguments $(1, \la_n)$ are omitted for brevity. Consider the Lagrange bracket
$$
\langle C_3, C_4 \rangle = C_3^{[3]} C_4 - C_3'' C_4' + C_3' C_4'' - C_3 C_4^{[3]}.
$$
The identity \eqref{wron} implies that 
$$
\langle C_3(x, \la_n), C_4(x, \la_n) \rangle_{|x = 0} = \langle C_3(x, \la_n), C_4(x, \la_n) \rangle_{|x = 1}.
$$
Using \eqref{initC} and \eqref{sm2}, we obtain
$$
\langle C_3(x, \la_n), C_4(x, \la_n) \rangle_{|x = 0} = 0, \quad
\langle C_3(x, \la_n), C_4(x, \la_n) \rangle_{|x = 1} = \begin{vmatrix} C_4'' & C_3'' \\ C_4' & C_3' \end{vmatrix}(1, \la_n).
$$
Hence, the relation \eqref{sm4} implies $(\Delta_{31} \Delta_{42} - \Delta_{41}\Delta_{32})(\la_n) = 0$. Therefore, the right-hand side of \eqref{sm3} turns into zero, so we arrive at \eqref{det0}.

Thus, it remains to show that all the elements of the determinant \eqref{det0} are non-zero. Suppose that $n_{31} = -n_{42} = 0$. Then, either $n_{41} = 0$ or $n_{32} = 0$. But this corresponds either to the case (I) or to the case (IV), respectively. Hence $n_{31} \ne 0$, $n_{42} \ne 0$, which implies $n_{41} n_{32} \ne 0$. This concludes the proof for the case (II).

\smallskip

\textit{Case (III).} Due to Lemma~\ref{lem:aux2}, we have $C_4(1, \la_n) = C_4'(1,\la_n) = 0$. Therefore, it follows from \eqref{D1}, \eqref{D3}, and \eqref{D11} that $\Delta_{22}(\la_n) = \Delta_{33}(\la_n) = 0$ and
\begin{equation} \label{sm5}
\Delta_{11}(\la_n) = C_4''(1,\la_n) 
\begin{vmatrix} 
C_2'(1,\la_n) & C_3'(1,\la_n) \\ 
C_2(1, \la_n) & C_3(1, \la_n)
\end{vmatrix} = 0.
\end{equation}
If $C_4''(1, \la_n) = 0$, then $\Delta_{21}(\la_n) = 0$. In view of \eqref{D21}, we obtain $C_3(1, \la_n) = 0$, so we arrive at the case (IV). Therefore, in the case (III), $C_4''(1,\la_n) \ne 0$, so the determinant in \eqref{sm5} equals zero. This implies $\Delta_{42}(\la_n) = 0$. Consequently, $m_{32,\langle -1 \rangle}(\la_n) = m_{42,\langle -1 \rangle}(\la_n) = 0$. Hence, it follows from \eqref{n21} and \eqref{n31} that $n_{32} = n_{42} =0$. Furthermore, $n_{31} = -n_{42} = 0$. Then, the relation \eqref{rel3} readily follows from \eqref{n41}.

\smallskip

\textit{Case (IV).} Due to Lemma~\ref{lem:aux2}, we have $C_4(1, \la_n) = C_3(1, \la_n) = C_4'(1, \la_n) = 0$. As in the cases (II) and (III), one can show that all the elements of $\mathcal N(\la_n)$ except for $n_{41}$ equal zero. Thus, the relation \eqref{n41} implies $n_{41} = m_{41,\langle -1 \rangle}$.

\smallskip 

\textit{Case (V).} It follows from Lemma~4 in \cite{Bond22-alg} that, 
if $\la_0$ is not a zero of two neighbouring characteristic functions $\Delta_{kk}(\la)$ and $\Delta_{k+1,k+1}(\la)$, then the corresponding weight matrix has the following structure:
$$
\mathcal N(\la_0) = \begin{bmatrix}
                        0 & 0 & 0 & 0 \\
                        n_{21} & 0 & 0 & 0 \\
                        0 & n_{32} & 0 & 0 \\
                        0 & 0 & n_{43} & 0
                    \end{bmatrix}.
$$

In the case (V), we have $\Delta_{11}(\la_0) = \Delta_{33}(\la_0) = 0$ and $\Delta_{22}(\la_0) \ne 0$, so \eqref{mjk} and \eqref{n21} imply that $n_{32} = 0$. It has been already proved that the elements $n_{21}$ and $n_{43}$ are equal to each other and can be found from \eqref{n21}.
This completes the proof.
\end{proof}

\begin{lem} \label{lem:recm43}
Under the hypothesis of Theorem~\ref{thm:uniq2}, the spectral data $\{ \la_n, \ga_n, \xi_n \}_{n \ge 1}$ uniquely specify the function $m_{43}(\la)$.
\end{lem}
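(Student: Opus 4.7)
The plan is to adapt the Phragmén-Lindelöf argument of Lemma~\ref{lem:m43} to the present setting, where the separation condition $(\mathcal S)$ is no longer available. Assuming that $\mathcal L$ and $\tilde{\mathcal L}$ both satisfy the hypothesis of Theorem~\ref{thm:uniq2} and share the spectral data $\{ \la_n, \ga_n, \xi_n \}_{n \ge 1}$, I would introduce the entire function
$$
G(\la) := \Delta_{33}(\la)\tilde\Delta_{43}(\la) - \tilde\Delta_{33}(\la)\Delta_{43}(\la)
$$
and set $K(\la) := G(\la)/\Delta_{22}(\la)$. The central task is to show that $G(\la_n) = 0$ for every $n \ge 1$. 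Once this is done, the simplicity of the zeros $\{ \la_n \}$ of $\Delta_{22}$ forces $K$ to be entire, and the estimate of Proposition~\ref{prop:estD} computed with $a_{33} = -1$, $a_{43} = a_{22} = 0$, $s_3 = \om_4$, $s_2 = \om_3 + \om_4$ yields $|K(\rho^4)| \le c_{\de} |\rho|^{-1} \exp(\mbox{Re}\,(\rho(\om_4 - \om_3)))$ on $G_{\de,22\xi}$. This is exactly the bound used in Lemma~\ref{lem:m43}; it shows that $K$ is entire of order $\le 1/4$ and vanishes at infinity along the ray $\arg\rho = \pi/4$ (i.e., along the negative real axis in the $\la$-plane), so that Phragmén-Lindelöf~\cite[Corollary~5.1]{BFY14} gives $K \equiv 0$, hence $m_{43} \equiv \tilde m_{43}$.

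The hard step is the verification that $G(\la_n) = 0$, which I would carry out by joint case analysis on \eqref{cases} applied to both $\mathcal L$ and $\tilde{\mathcal L}$. The identity $\ga_n = \tilde\ga_n$ forces the two problems to lie jointly in $\{(I),(II)\}$ when $\ga_n \neq 0$, and jointly in $\{(III),(IV)\}$ when $\ga_n = 0$. Lemmas~\ref{lem:aux1} and~\ref{lem:aux2} then yield: in cases (II) and (IV), $\Delta_{33}(\la_n) = \Delta_{43}(\la_n) = 0$; in case (III), $\Delta_{33}(\la_n) = 0$ while $\Delta_{43}(\la_n) \ne 0$; in case (I), $\Delta_{33}(\la_n) \ne 0$. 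If either problem sits in case (II) or (IV), both products in $G(\la_n)$ carry a vanishing factor. If $\mathcal L$ is in case (III), then $\tilde{\mathcal L}$ must be in case (III) or (IV), and in both subcases $\tilde\Delta_{33}(\la_n) = 0$, so again both products vanish; symmetric roles handle the mirror configurations. Finally, in the only remaining possibility, that both problems are in case (I) at $\la_n$, Lemma~\ref{lem:findbg} applies and gives $m_{43}(\la_n) = \xi_n/\ga_n = \tilde m_{43}(\la_n)$, which is equivalent to
$$
G(\la_n) = \Delta_{33}(\la_n)\tilde\Delta_{33}(\la_n)\bigl(m_{43}(\la_n) - \tilde m_{43}(\la_n)\bigr) = 0.
$$

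I expect the joint case analysis above to be the main obstacle: under $(\mathcal S)$ only case (I) ever arises and Lemma~\ref{lem:findbg} immediately furnishes the equality $m_{43}(\la_n) = \tilde m_{43}(\la_n)$, but without $(\mathcal S)$ the eigenvalue $\la_n$ may coincide with a genuine pole of $m_{43}$, so the values are not directly comparable. The resolution rests on the observation extracted from Lemma~\ref{lem:aux2} that in the degenerate cases (II) and (IV) the numerator $\Delta_{43}$ vanishes simultaneously with $\Delta_{33}$, while in case (III) the constraint $\ga_n = 0$ automatically propagates the degeneracy to $\tilde{\mathcal L}$; these two observations together annihilate every term of $G(\la_n)$ that is not controlled by Lemma~\ref{lem:findbg}.
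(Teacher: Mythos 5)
Your proposal is correct and follows essentially the same route as the paper: the same function $G(\la) = \Delta_{33}\tilde\Delta_{43} - \tilde\Delta_{33}\Delta_{43}$, the same reduction to the Phragm\'en--Lindel\"of argument of Lemma~\ref{lem:m43}, and the same case analysis driven by the observation that $\ga_n = \tilde\ga_n$ confines both problems jointly to $\{(I),(II)\}$ or $\{(III),(IV)\}$, with cases (II) and (IV) killing both terms of $G(\la_n)$ via $\Delta_{33}(\la_n)=\Delta_{43}(\la_n)=0$ and the pair (III),(III) handled by the common vanishing of $\Delta_{33}$ and $\tilde\Delta_{33}$. The growth exponents $a_{33}=-1$, $a_{43}=a_{22}=0$ and the resulting bound on $K$ also match the paper's computation.
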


\begin{proof}
Consider two problems $\mathcal L$ and $\tilde{\mathcal L} = \mathcal L(\tilde p, \tilde q, \tilde a, \tilde b, \tilde c)$. Suppose that they both fulfill the hypothesis of Theorem~\ref{thm:uniq2} and $\la_n = \tilde \la_n$, $\ga_n = \tilde \ga_n$, $\xi_n = \tilde \xi_n$, $n \ge 1$.

Note that, if $\ga_n \ne 0$, then $\la_n$ is of the case (I) or (II), and if $\ga_n = 0$, then $\la_n$ is of the case (III) or (IV). Analogously to the proof of Lemma~\ref{lem:m43}, consider the function 
$$
G(\la) = \Delta_{33}(\la) \tilde \Delta_{43}(\la) - \tilde \Delta_{33}(\la) \Delta_{43}(\la).
$$
If $\la_n = \tilde \la_n$ is of the case (I) for the both problems $\mathcal L$ and $\tilde{\mathcal L}$, then one can show that $G(\la_n) = 0$ similarly to the proof of Lemma~\ref{lem:m43}. In the cases (II) and (IV), Lemmas~\ref{lem:aux1} and~\ref{lem:aux2} imply $\Delta_{33}(\la_n) = \Delta_{43}(\la_n) = 0$. Hence, if $\la_n$ is of case (II) or (IV) for $\mathcal L$ or $\tilde{\mathcal L}$, then $G(\la_n) = 0$. If $\la_n$ is of the case (III) for the both problems, then $\Delta_{33}(\la_n) = \tilde \Delta_{33}(\la_n) = 0$, which also implies $G(\la_n) = 0$. Thus, $G(\la_n) = 0$ in all the possible cases. Repeating the remaining part of the proof of Lemma~\ref{lem:m43}, we conclude that $m_{43}(\la) \equiv \tilde m_{43}(\la)$.
\end{proof}

\begin{lem} \label{lem:case2}
In the case (II), there exists a constant $\al_n \ne 0$ such that $S_4(x, \la_n) = \al_n y_n(x)$. Furthermore, the following relation holds:
\begin{equation} \label{findal}
\ga_n \dot \Delta_{43}(\la_n) + \xi_n \dot \Delta_{33}(\la_n) = -\al_n,
\end{equation}
where $\dot \Delta_{jk}(\la) = \frac{d}{d\la} \Delta_{jk}(\la)$.
\end{lem}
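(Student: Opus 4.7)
The plan is to first identify $S_4(x, \la_n)$ as an eigenfunction of $\mathcal L$ and then extract the constant $\al_n$ by a Lagrange-bracket computation together with a limit argument. In Case (II), the eigenfunction $y_n$ satisfies $y_n(1) = y_n'(1) = 0$ (from the boundary conditions of $\mathcal L$) and $y_n''(1) = V_3(y_n) = 0$. On the other hand, by its defining initial conditions \eqref{initS}, the solution $S_4(x,\la)$ obeys $S_4^{[j]}(1,\la) = 0$ for $j = 0, 1, 2$ and $S_4^{[3]}(1,\la) = 1$. By Lemma~\ref{lem:aux2}, in Case (II) we have $C_3(1,\la_n) = C_4(1,\la_n) = 0$, so Lemma~\ref{lem:aux1} yields $U_1(S_4)(\la_n) = -\Delta_{33}(\la_n) = 0$ and $U_2(S_4)(\la_n) = \Delta_{43}(\la_n) = 0$. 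Hence $S_4(x,\la_n)$ satisfies all four boundary conditions of $\mathcal L$, i.e., it is an eigenfunction at $\la_n$. By simplicity of $\la_n$, there exists $\al_n$ with $S_4(x,\la_n) = \al_n y_n(x)$, and $\al_n \neq 0$ because $S_4^{[3]}(1,\la_n) = 1$ prevents $S_4(x,\la_n) \equiv 0$.

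Next, I would apply the Lagrange identity \eqref{wron} to the pair $(S_4(\cdot,\la), y_n)$, which satisfy $\ell(S_4) = \la S_4$ and $\ell(y_n) = \la_n y_n$, obtaining
\begin{equation*}
\langle S_4(\cdot,\la), y_n \rangle \big|_0^1 = (\la - \la_n) \int_0^1 S_4(x,\la) y_n(x)\, dx.
\end{equation*}
At $x = 1$, the values $V_s(S_4) = \de_{s4}$ combined with $y_n(1) = y_n'(1) = 0$ collapse the bracket to $y_n(1) = 0$. At $x = 0$, formula \eqref{brack0} and the vanishing $U_1(y_n) = U_2(y_n) = 0$ together with the identifications $U_1(S_4) = -\Delta_{33}(\la)$ and $U_2(S_4) = \Delta_{43}(\la)$ from Lemma~\ref{lem:aux1} reduce the boundary term to $\ga_n \Delta_{43}(\la) + \xi_n \Delta_{33}(\la)$. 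Thus
\begin{equation*}
-\bigl(\ga_n \Delta_{43}(\la) + \xi_n \Delta_{33}(\la)\bigr) = (\la - \la_n) \int_0^1 S_4(x,\la) y_n(x)\, dx.
\end{equation*}

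Finally, in Case (II) both $\Delta_{33}$ and $\Delta_{43}$ vanish at $\la_n$, so dividing by $(\la - \la_n)$ and passing to the limit $\la \to \la_n$ (via L'Hospital on the left and continuity of the integrand on the right) gives
\begin{equation*}
-\bigl(\ga_n \dot\Delta_{43}(\la_n) + \xi_n \dot\Delta_{33}(\la_n)\bigr) = \al_n \int_0^1 y_n^2(x)\, dx = \al_n,
\end{equation*}
which is precisely \eqref{findal}. The only nontrivial step is the passage from the vanishing of $C_3(1,\la_n)$ and $C_4(1,\la_n)$ to the identification of $S_4(x,\la_n)$ as a scalar multiple of $y_n$; once this is in place, the asserted relation falls out of a single application of the Lagrange identity and L'Hospital's rule.
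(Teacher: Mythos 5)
Your proof is correct and follows essentially the same route as the paper: identify $S_4(x,\la_n)$ as an eigenfunction via Lemmas~\ref{lem:aux1} and~\ref{lem:aux2}, apply the Lagrange identity \eqref{wron} with \eqref{brack0}, and pass to the limit $\la \to \la_n$. The only (harmless) difference is that you pair $S_4(\cdot,\la)$ with $y_n$ rather than with $S_4(\cdot,\la_n)$, which yields $\al_n$ directly instead of the paper's relation $\al_n^2 = -\al_n(\ga_n\dot\Delta_{43}(\la_n)+\xi_n\dot\Delta_{33}(\la_n))$ followed by division by $\al_n$.
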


\begin{proof}
In the case (II), $S_4(x, \la_n)$ is the eigenfunction corresponding to the eigenvalue $\la_n$, so $S_4(x, \la_n) = \al_n y_n(x)$, where
\begin{equation} \label{defal}
\alpha_n^2 = \int_0^1 S_4^2(x, \la_n) \, dx.
\end{equation}
Hence
\begin{equation} \label{S4}
S_4(0, \la_n) = \al_n \ga_n, \quad S_4'(0, \la_n) = \al_n \xi_n. 
\end{equation}

Using \eqref{wron}, \eqref{brack0}, and \eqref{initS}, we derive
\begin{multline} \label{sm6}
(\la - \la_n) \int_0^1 S_4(x, \la) S_4(x, \la_n) \, dx = \langle S_4(x, \la), S_4(x, \la_n) \rangle \big|_0^1 = -U_2(S_4(x, \la)) U_3(S_4(x, \la_n)) \\ + U_1(S_4(x, \la)) U_4(S_4(x, \la_n)) - U_4(S_4(x, \la)) U_1(S_4(x, \la_n)) + U_3(S_4(x, \la)) U_2(S_4(x, \la_n)).
\end{multline}
By Lemma~\ref{lem:aux2}, the relations $C_4(1,\la_n) = C_3(1,\la_n) = 0$ hold in the case (II). In view of Lemma~\ref{lem:aux1}, this yields $U_1(S_4(x, \la_n)) = U_2(S_4(x, \la_n)) = 0$. Consequently, it follows from \eqref{sm6} and Lemma~\ref{lem:aux1} that
$$
(\la - \la_n) \int_0^1 S_4(x, \la) S_4(x, \la_n) \, dx = - (\Delta_{43}(\la) S_4(0, \la_n) + \Delta_{33}(\la) S_4'(0, \la_n)).
$$
Dividing the both sides of the latter relation by $(\la - \la_n)$, passing to the limit as $\la \to \la_n$, and using \eqref{defal} and \eqref{S4}, we obtain
$$
\al_n^2 = -(\al_n \ga_n \dot \Delta_{43}(\la_n) + \al_n \xi_n \dot \Delta_{33}(\la_n)),
$$
which implies \eqref{findal}.
\end{proof}

Given the spectral data $\{ \la_n, \ga_n, \xi_n \}_{n \ge 1}$ and the function $m_{43}(\la)$, one can uniquely determine for each eigenvalue $\la_n$ to which case it belongs by using the following algorithm.

\begin{alg} \label{alg:case}
Let the data $\la_n, \ga_n, \xi_n$ for a fixed $n \ge 1$ and the function $m_{43}(\la)$ be given.
\begin{enumerate}
\item If $\ga_n = 0$ and $\la_n$ is a pole of $m_{43}(\la)$, then $\la_n$ belongs to the case (III).
\item If $\ga_n = 0$ and $\la_n$ is not a pole of $m_{43}(\la)$, then $\la_n$ belongs to the case (IV).
\item If $\ga_n \ne 0$ and $m_{43}(\la_n) = \frac{\xi_n}{\ga_n}$, then $\la_n$ belongs to the case (I).
\item Otherwise, $\la_n$ belongs to the case (II).
\end{enumerate}
\end{alg}

Indeed, the relation $m_{43}(\la_n) = \dfrac{\xi_n}{\ga_n}$ in the case (I) holds by Lemma~\ref{lem:findbg}. In the case (II), the both functions $\Delta_{33}(\la)$ and $\Delta_{43}(\la)$ have a zero $\la_n$, so $m(\la_n) = -\dfrac{\dot\Delta_{43}(\la_n)}{\dot \Delta_{33}(\la_n)}$. This value cannot be equal to $\dfrac{\xi_n}{\gamma_n}$ by virtue of Lemma~\ref{lem:case2}. These arguments justify the steps 3 and 4 of Algorithm~\ref{alg:case}.

\begin{lem} \label{lem:recD}
The spectral data $\{ \la_n, \ga_n, \xi_n \}_{n \ge 1}$ uniquely specify the functions $\Delta_{33}(\la)$ and $\Delta_{43}(\la)$.
\end{lem}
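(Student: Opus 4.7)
Consider two problems $\mathcal L$ and $\tilde{\mathcal L} = \mathcal L(\tilde p, \tilde q, \tilde a, \tilde b, \tilde c)$ that share the same spectral data $\{\la_n, \ga_n, \xi_n\}_{n \ge 1}$. The plan is to derive $\Delta_{33} \equiv \tilde \Delta_{33}$ and $\Delta_{43} \equiv \tilde \Delta_{43}$. Lemma~\ref{lem:recm43} already yields $m_{43} \equiv \tilde m_{43}$, and since $m_{43}(\la) = -\Delta_{43}(\la)/\Delta_{33}(\la)$, it suffices to establish $\Delta_{33} \equiv \tilde \Delta_{33}$; the identity for $\Delta_{43}$ then follows automatically.

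The first step is to pin down the zeros of $\Delta_{33}$ using only quantities that the spectral data determine. Algorithm~\ref{alg:case} takes $\{\la_n, \ga_n, \xi_n\}_{n \ge 1}$ and $m_{43}(\la)$ as its sole inputs, so it assigns each $\la_n$ to one of the cases (I)--(IV) identically for $\mathcal L$ and $\tilde{\mathcal L}$. Combining Lemmas~\ref{lem:aux1} and~\ref{lem:aux2}, the identity $\Delta_{33} = -\Delta_{11}$ shows that $\la_n$ is a zero of $\Delta_{33}$ precisely when $\la_n$ belongs to (II), (III), or (IV). Any further zero of $\Delta_{33}$ falls under case (V), which by Lemma~\ref{lem:weight} corresponds to a pole of $m_{43}(\la)$ lying outside $\{\la_n\}_{n \ge 1}$. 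Since $m_{43} \equiv \tilde m_{43}$, the zero sets of $\Delta_{33}$ and $\tilde \Delta_{33}$ coincide, and the hypothesis that $\Delta_{11}$ has only simple zeros makes all these zeros simple.

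The second step is to study the quotient
\[
H(\la) := \frac{\Delta_{33}(\la)}{\tilde \Delta_{33}(\la)},
\]
which is then entire and nowhere vanishing. Using Proposition~\ref{prop:estD} with $j = k = 3$, I would bound $\Delta_{33}(\rho^4)$ from above and $\tilde \Delta_{33}(\rho^4)$ from below uniformly on each set $G_{\de, 33\xi}$, obtaining $|H(\rho^4)| \le C_{\de}$ for all sufficiently large $|\rho|$ outside the exceptional disks around the zeros of $\tilde \Delta_{33}$. Since $H$ is holomorphic across these small disks, the maximum modulus principle transfers the bound inside them, so $H$ is bounded on $\mathbb C$. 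Liouville's theorem then forces $H \equiv C_0$ for some constant $C_0 \in \mathbb C$.

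The main obstacle I anticipate is to verify $C_0 = 1$. For this, I would invoke the standard refined asymptotic expansions of the fundamental system $\{C_k(x, \la)\}_{k=1}^{4}$ as $|\rho| \to \infty$ inside a fixed sector, whose leading terms are governed by the constant-coefficient equation $y^{(4)} = \la y$ and hence do not depend on the specific coefficients $p, q, a, b, c$. Applied to $\Delta_{33}(\rho^4) = C_4(1, \la)$ and to its counterpart for $\tilde{\mathcal L}$, this yields $H(\rho^4) \to 1$ along a suitable ray (for instance, as $\la \to -\infty$ along the real axis). Consequently $C_0 = 1$, so $\Delta_{33} \equiv \tilde \Delta_{33}$, and finally $\Delta_{43} \equiv \tilde \Delta_{43}$ via $m_{43} \equiv \tilde m_{43}$.
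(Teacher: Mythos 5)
Your proposal is correct and takes essentially the same route as the paper: both arguments identify the zeros of $\Delta_{33}$ and $\Delta_{43}$ from the function $m_{43}(\la)$ (Lemma~\ref{lem:recm43}) together with the case classification of the eigenvalues via Algorithm~\ref{alg:case} and Lemma~\ref{lem:aux2}, and both fix the remaining normalization by the coefficient-independent asymptotics of $C_4(1,\la)$. The only packaging difference is that the paper reconstructs $\Delta_{33}$ and $\Delta_{43}$ directly by Hadamard's factorization over their zero sets, whereas you run a two-problem comparison with a quotient $H = \Delta_{33}/\tilde\Delta_{33}$ and Liouville's theorem; these are interchangeable here.
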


\begin{proof}
Recall that $m_{43}(\la) = -\dfrac{\Delta_{43}(\la)}{\Delta_{33}(\la)}$. Therefore, distinct zeros of the entire functions $\Delta_{33}(\la)$ and $\Delta_{43}(\la)$ can be found as the poles and the zeros of $m_{43}(\la)$, respectively. However, $\Delta_{33}(\la)$ and $\Delta_{43}(\la)$ can have common zeros. In view of Lemma~\ref{lem:aux2}, these common zeros coincide with the eigenvalues $\{ \la_n \}$ that belong to the cases (II) and (IV). Then, the functions $\Delta_{j3}(\la)$, $j = 3,4$ can be constructed by their zeros $\{\la_{n,j3}(\la)\}$ by using Hadamard's Factorization Theorem:
$$
\Delta_{j3}(\la) = c_{j3} \prod_{n = 1}^{\infty} \left(1 - \frac{\la}{\la_{n,j3}} \right), \quad j = 3, 4.
$$
The constants $c_{j3}$ can be found in the standard way (see \cite[Theorem 1.1.4]{FY01}).
\end{proof}

\begin{lem} \label{lem:cases34}
In the cases (III) and (IV), $n_{41}(\la_n) = \xi_n^2$.
\end{lem}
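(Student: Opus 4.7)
The plan rests on identifying the eigenfunction $y_n$ explicitly in both cases and then extracting $n_{41}(\la_n)$ from a pair of Lagrange-bracket identities applied to $\Phi_1$ and $\Phi_2$. First, I observe that $\ga_n = 0$ in cases (III) and (IV), while $\xi_n \ne 0$ (otherwise $y_n(0) = y_n'(0) = 0$ together with $U_1(y_n) = U_2(y_n) = 0$ would force $y_n \equiv 0$). By Lemma~\ref{lem:aux2}, $V_1(C_4) = C_4(1, \la_n) = 0$ and $V_2(C_4) = C_4'(1, \la_n) = 0$, while by construction $U_s(C_4) = \de_{s,4}$; hence $C_4(\cdot, \la_n)$ satisfies every boundary condition of $\mathcal L$ and is an eigenfunction. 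Simplicity of $\la_n$ and $C_4'(0, \la_n) = 1$ give $y_n = \xi_n C_4(\cdot, \la_n)$ and, via the normalization, $\int_0^1 C_4^2(x, \la_n)\, dx = 1/\xi_n^2$.

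Next, I apply the Lagrange identity \eqref{wron} to the pairs $(\Phi_1, y_n)$ and $(\Phi_2, y_n)$. Using \eqref{brack0}, the defining boundary conditions of $\Phi_1, \Phi_2$, and the data $\ga_n = V_1(y_n) = V_2(y_n) = 0$, every term in each boundary bracket either contains a zero factor or cancels, yielding
\begin{equation*}
(\la - \la_n) \int_0^1 \Phi_1(x, \la) y_n(x)\, dx = \xi_n, \qquad (\la - \la_n) \int_0^1 \Phi_2(x, \la) y_n(x)\, dx = 0.
\end{equation*}
I then expand $\Phi_1 = C_1 + m_{21} C_2 + m_{31} C_3 + m_{41} C_4$ and $\Phi_2 = C_2 + m_{32} C_3 + m_{42} C_4$ via \eqref{invM} and introduce the entire functions $F_k(\la) := \int_0^1 C_k(x, \la) y_n(x)\, dx$, noting $F_4(\la_n) = \xi_n \int_0^1 C_4^2(x, \la_n)\, dx = 1/\xi_n$. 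Taking the residue of the first identity and the value of the second at $\la_n$, together with $m_{32, \langle -1\rangle}(\la_n) = m_{42, \langle -1\rangle}(\la_n) = 0$ (established in Lemma~\ref{lem:weight}), I obtain
\begin{align*}
m_{21, \langle -1\rangle}(\la_n) F_2(\la_n) + m_{31, \langle -1\rangle}(\la_n) F_3(\la_n) + m_{41, \langle -1\rangle}(\la_n) F_4(\la_n) &= \xi_n, \\
F_2(\la_n) + m_{32, \langle 0\rangle}(\la_n) F_3(\la_n) + m_{42, \langle 0\rangle}(\la_n) F_4(\la_n) &= 0.
\end{align*}

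The vanishing $n_{31}(\la_n) = 0$ from Lemma~\ref{lem:weight} rewrites via \eqref{n31} as $m_{31, \langle -1\rangle} = m_{32, \langle 0\rangle} m_{21, \langle -1\rangle}$, so the first displayed equation becomes $m_{21, \langle -1\rangle}[F_2(\la_n) + m_{32, \langle 0\rangle}(\la_n) F_3(\la_n)] + m_{41, \langle -1\rangle}(\la_n) F_4(\la_n) = \xi_n$. Substituting the second equation eliminates the bracket in favor of $-m_{42, \langle 0\rangle}(\la_n) F_4(\la_n)$, giving $[m_{41, \langle -1\rangle}(\la_n) - m_{21, \langle -1\rangle}(\la_n) m_{42, \langle 0\rangle}(\la_n)] F_4(\la_n) = \xi_n$. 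The left-hand coefficient equals $n_{41}(\la_n)$ by \eqref{rel3} in case (III) and trivially in case (IV) since $m_{21, \langle -1\rangle}(\la_n) = 0$ there; dividing by $F_4(\la_n) = 1/\xi_n$ yields $n_{41}(\la_n) = \xi_n^2$. The main obstacle is that near $\la_n$ the three characteristic functions $\Delta_{11}, \Delta_{22}, \Delta_{33}$ vanish simultaneously, so the Laurent coefficients of different entries of $M(\la)$ are strongly coupled; this is precisely what makes the auxiliary identity $\int_0^1 \Phi_2 y_n\, dx \equiv 0$ indispensable, as it decouples the unknown integrals $F_2(\la_n), F_3(\la_n)$ in favor of the known quantity $m_{42, \langle 0\rangle}(\la_n)$, leaving only the norm $F_4(\la_n) = 1/\xi_n$ to produce the clean answer.
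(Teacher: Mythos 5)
Your argument is correct, and it takes a genuinely different route from the paper's. The paper treats the two cases separately and computationally: in case (III) it works directly with the explicit determinant formulas \eqref{D1}--\eqref{D4}, deriving the intermediate identities \eqref{C4pp} and \eqref{smn41} and manipulating $3\times 3$ minors, while in case (IV) it pairs $S_4$ and $C_4$ to get \eqref{sm10}--\eqref{sm11}. You instead pair the Weyl solutions $\Phi_1,\Phi_2$ against the eigenfunction $y_n=\xi_n C_4(\cdot,\la_n)$ via \eqref{wron} and \eqref{brack0}, expand over the fundamental system, and extract $n_{41}$ from the residue at $\la_n$; I checked the bracket evaluations ($\langle\Phi_1,y_n\rangle|_0^1=\xi_n$, $\langle\Phi_2,y_n\rangle|_0^1=0$) and the elimination step, and they are right. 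Your route handles (III) and (IV) uniformly, is visibly in the same spirit as the computation in Lemma~\ref{lem:findbg}, and makes transparent why the answer is $\xi_n^2$ (one factor of $\xi_n$ from $U_4(y_n)$ in the bracket, one from $F_4(\la_n)=1/\xi_n$); the price is that it leans on several structural facts from Lemma~\ref{lem:weight} --- the regularity of $m_{32},m_{42}$ at $\la_n$, the vanishing $n_{31}(\la_n)=0$ rewritten through \eqref{n31}, and the identification of $m_{41,\langle-1\rangle}-m_{21,\langle-1\rangle}m_{42,\langle0\rangle}$ with $n_{41}$ via \eqref{n21} and \eqref{n41} --- all of which are legitimately established before this lemma, so there is no circularity. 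Two cosmetic points: the expansion $\Phi_1=C_1+m_{21}C_2+m_{31}C_3+m_{41}C_4$ comes from \eqref{relM}, not \eqref{invM}; and when you ``take the value of the second identity at $\la_n$'' you are really equating the zeroth Laurent coefficients, which is only the same thing because $m_{32}$ and $m_{42}$ have no pole there --- worth saying explicitly.
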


\begin{proof}
In the cases (III) and (IV), we have $\ga_n = 0$ and $y_n(x) = \xi_n C_4(x, \la_n)$. Hence
\begin{equation} \label{intC4}
\int_0^1 C_4^2(x, \la_n) \, dx = \frac{1}{\xi_n^2}.
\end{equation}

\textit{Case (III).} Due to Lemma~\ref{lem:aux2} and \eqref{sm5}, we have
\begin{equation} \label{C4}
C_4(1, \la_n) = C_4'(1, \la_n) = 0, \quad 
\Delta_{42}(\la_n) = \begin{vmatrix}
C_3'(1, \la_n) & C_2'(1, \la_n) \\ C_3(1, \la_n) & C_2(1, \la_n)
\end{vmatrix} = 0.
\end{equation}
Using \eqref{initC}, \eqref{wron}, and \eqref{C4}, we derive
\begin{align*}
(\la - \la_n) \int_0^1 C_4(x, \la) C_4(x, \la_n) \, dx & = \langle C_4(x, \la), C_4(x, \la_n) \rangle \big|_0^1 \\ & = C_4'(1, \la) C_4''(1, \la_n) - C_4(1, \la) C_4^{[3]}(1, \la_n).
\end{align*}
Then, dividing by $(\la - \la_n)$, passing to the limit as $\la \to \la_n$, and using \eqref{intC4}, we arrive at the relation
\begin{equation} \label{sm7}
\dot C_4'(1, \la_n) C_4''(1, \la_n) - \dot C_4(1, \la_n) C_4^{[3]}(1, \la_n) = \frac{1}{\xi_n^2}.
\end{equation}

Next, the formulas \eqref{D3} and \eqref{C4} imply
\begin{equation} \label{sm8}
\dot\Delta_{22}(\la_n) = C_3'(1, \la_n) \dot C_4(1, \la_n) - C_3(1, \la_n) \dot C_4'(1, \la_n).
\end{equation}

Using \eqref{initC}, \eqref{wron}, and \eqref{C4}, we get
\begin{equation} \label{sm9}
\langle C_3(x, \la_n), C_4(x, \la_n) \rangle\big|_0^1 = C_3'(1, \la_n) C_4''(1, \la_n) - C_3(1, \la_n) C_4^{[3]}(1, \la_n) = 0.
\end{equation}

Combining \eqref{sm7}, \eqref{sm8}, and \eqref{sm9}, we deduce
\begin{equation} \label{C4pp}
\frac{\dot\Delta_{22}(\la_n) C_4''(1, \la_n)}{C_3(1, \la_n)} = -\xi_n^2.
\end{equation}

It follows from \eqref{D3}, \eqref{D4}, and \eqref{C4} that $\Delta_{22}(\la_n) = \Delta_{42}(\la_n) = 0$, so $m_{42}(\la_n) = -\dfrac{\dot \Delta_{42}(\la_n)}{\dot \Delta_{22}(\la_n)}$.
Thus, the relation \eqref{rel3} implies
\begin{equation} \label{smn41}
n_{41}(\la_n) =  -\frac{\dot\Delta_{41}(\la_n) \dot \Delta_{22}(\la_n) + \dot \Delta_{42}(\la_n) \Delta_{21}(\la_n)}{\dot\Delta_{11}(\la_n) \dot \Delta_{22}(\la_n)}.
\end{equation}

Let us represent the functions participating in the right-hand side of \eqref{smn41} by using \eqref{D1}--\eqref{D4} and \eqref{C4}:
\begin{align*}
\Delta_{41}(\la_n) & = C_2'' \begin{vmatrix} C_3' & C_1' \\ C_3 & C_1 \end{vmatrix} - C_3'' \begin{vmatrix} C_2' & C_1'\\ C_2 & C_1 \end{vmatrix}, \\
\dot \Delta_{11}(\la_n) & = C_2'' \dot \Delta_{22}(\la_n) - C_3'' \dot \Delta_{32}(\la_n) - C_4'' \dot \Delta_{42}(\la_n), \\ 
\Delta_{21}(\la_n) & = C_4'' \begin{vmatrix}
C_1' & C_3' \\ C_1 & C_3
\end{vmatrix},
\end{align*}
where the arguments $(1, \la_n)$ are omitted for brevity.
Substituting the latter relations into \eqref{smn41}, we obtain
$$
n_{41}(\la_n) = -\frac{1}{\dot \Delta_{11}(\la_n) \dot \Delta_{22}(\la_n)} \biggl( (\dot \Delta_{11}(\la_n) + C_3'' \dot \Delta_{32}(\la_n)) \begin{vmatrix} C_3' & C_1' \\ C_3 & C_1\end{vmatrix} - C_3'' \dot \Delta_{22}(\la_n) \begin{vmatrix} C_2' & C_1' \\ C_2 & C_1 \end{vmatrix}\biggr)
$$
Using \eqref{D4} and \eqref{C4}, one can easily show that
$$
\dot \Delta_{32}(\la_n) \begin{vmatrix} C_3' & C_1' \\ C_3 & C_1 \end{vmatrix} - \dot \Delta_{22}(\la_n) \begin{vmatrix} C_2' & C_1' \\ C_2 & C_1 \end{vmatrix} = 0.
$$
Hence
$$
n_{41}(\la_n) = -\frac{1}{\dot \Delta_{22}(\la_n)} \begin{vmatrix} C_3' & C_1' \\ C_3 & C_1 \end{vmatrix} = \frac{\Delta_{21}(\la_n)}{C_4''(1, \la_n) \dot \Delta_{22}(\la_n)}.
$$
Taking \eqref{C4pp} and \eqref{D21} into account, we arrive at the relation $n_{41}(\la_n) = \xi_n^2$.

\smallskip

\textit{Case (IV)}. Clearly, 
\begin{equation} \label{yncase4}
y_n(x) = \xi_n C_4(x, \la_n) = \theta_n S_4(x, \la_n), 
\end{equation}
where $\theta_n \ne 0$ is a constant, and 
\begin{equation} \label{cond4}
C_4^{(j)}(1, \la_n) = 0, \quad j = 0,1,2, \quad U_s(S_4) = 0, \quad s = 1, 2, 3.
\end{equation}
Using \eqref{initC}, \eqref{wron}, \eqref{D1}, \eqref{initS}, and \eqref{cond4},  we obtain
$$
(\la - \la_n) \int_0^1 S_4(x, \la) C_4(x, \la_n) \, dx = \langle S_4(x, \la), C_4(x, \la_n) \rangle \big|_0^1 = U_1(S_4(x, \la)) = \Delta_{11}(\la).
$$
Taking \eqref{yncase4} into account, we calculate 
\begin{equation} \label{sm10}
\dot \Delta_{11}(\la_n) = \frac{\xi_n}{\theta_n} \int_0^1 C_4^2(x, \la_n) \, dx = \frac{1}{\xi_n \theta_n}. 
\end{equation}
In addition, we get from \eqref{D31} that 
\begin{equation} \label{sm11}
\Delta_{41}(\la_n) = -S_4'(0, \la_n) = -\frac{\xi_n}{\theta_n} C_4'(0, \la_n) = -\frac{\xi_n}{\theta_n}.
\end{equation}
By virtue of \eqref{mjk} and Lemma~\ref{lem:weight}, $n_{41}(\la_n) = -\dfrac{\Delta_{41}(\la_n)}{\dot \Delta_{11}(\la_n)}$. Substitution of \eqref{sm10} and \eqref{sm11} into the latter relation yields the claim. 
\end{proof}

\begin{proof}[Proof of Theorem~\ref{thm:uniq2}]
Let us show that the spectral data $\{ \la_n, \xi_n, \ga_n \}_{n \ge 1}$ uniquely specify the poles $\Lambda$ of the Weyl-Yurko matrix and the weight matrices $\{\mathcal N(\la_0) \}_{\la_0 \in \Lambda}$. Then, Theorem~\ref{thm:sd} will imply the uniqueness for solution of Inverse Problem~\ref{ip:main}.

By Lemma~\ref{lem:recm43}, the spectral data $\{ \la_n, \ga_n, \xi_n \}_{n \ge 1}$ uniquely specify $m_{43}(\la)$. Denote by $\{\mu_n \}_{n \ge 1}$ the poles of $m_{43}(\la)$ that are distinct from the eigenvalues $\{ \la_n \}_{n \ge 1}$. Then, the set $\Lambda = \{ \la_n \}_{n \ge 1} \cup \{ \mu_n \}_{n \ge 1}$ is uniquely found. Furthermore, using Algorithm~\ref{alg:case}, we can determine to which case among (I)--(IV) each eigenvalue $\la_n$ belongs. Let us consider the cases (I)--(V) separately and show that, in each case, the corresponding weight matrix can be found in accordance with its structure from Lemma~\ref{lem:weight}.

\smallskip

\textit{Case (I).} Find $\be_n =- \ga_n^2$ by Lemma~\ref{lem:findbg}. Then, the weight matrix $\mathcal N(\la_n)$ is uniquely determined by Lemma~\ref{lem:weight}.

\smallskip

\textit{Case (II).} By Lemma~\ref{lem:case2}, the functions $\Delta_{33}(\la) = -\Delta_{11}(\la)$ and $\Delta_{43}(\la)$ are uniquely specified. Therefore, by Lemma~\ref{lem:case2}, we find $\al_n$. Then, using \eqref{S4} and \eqref{D31}, determine 
$$
\Delta_{31}(\la_n) = -S_4(0, \la_n) = -\al_n \ga_n, \quad \Delta_{41}(\la_n) = -S_4'(0, \la_n) = -\al_n \xi_n. 
$$
Next, we find $m_{j1, \langle -1 \rangle}(\la_n) =-\dfrac{\Delta_{j1}(\la_n)}{\dot \Delta_{11}(\la_n)}$ for $j = 3, 4$. Consequently, the elements of the weight matrix $\mathcal N(\la_n)$ can be found from the relations \eqref{rel2} and \eqref{det0}.

\smallskip

\textit{Case (III).} Find $n_{21} = n_{43} = m_{43, \langle -1 \rangle}(\la_n)$ and $n_{41} = \xi_n^2$ by Lemma~\ref{lem:cases34}.

\smallskip

\textit{Case (IV).} Find the only non-zero element $n_{41} = \xi_n^2$ by Lemma~\ref{lem:cases34}.

\smallskip

\textit{Case (V).} For each $\mu_n$, find $n_{21}(\mu_n) = n_{43}(\mu_n) = m_{43,\langle -1\rangle}(\mu_n)$.
\end{proof}

\begin{remark}
Throughout this paper, we assume that, for each $n \ge 1$, there exists an eigenfunction $y_n(x)$ satisfying the condition $\int_0^1 y_n^2(x) \, dx = 1$. If the problem $\mathcal L$ is self-adjoint, then an eigenfunction can always be normalized by this condition. But in the non-self-adjoint case, we impose this assumption in order to avoid the situation $\int_0^1 y_n^2(x) \, dx = 0$. Anyway, in the case (I) of \eqref{cases}, the relation $\int_0^1 y_n^2(x) \, dx \ne 0$ is guaranteed even in the non-self-adjoint case (see Remark~\ref{rem:yn}).
\end{remark}

\begin{remark}
Note that the condition $\int_0^1 y_n^2(x) \, dx = 1$ defines the weight numbers $\ga_n$ and $\xi_n$ up to the sign: $\ga_n^{(1)} = -\ga_n^{(2)}$, $\xi_n^{(1)} = -\xi_n^{(2)}$. Anyway, according to the proof of Theorem~\ref{thm:uniq2}, the corresponding weight matrix $\mathcal N(\la_n)$ does not depend on the choice of either $\ga_n^{(1)}$, $\xi_n^{(1)}$ or $\ga_n^{(2)}$, $\xi_n^{(2)}$. The proof of Theorem~\ref{thm:uniq1} in Section~\ref{sec:uniq1} also does not depend on the choice of the sign.
\end{remark}

\section{Connection with Barcilon's problem} \label{sec:Bar}

In this section, we establish the connection between the inverse problems of McLaughlin \cite{McL82, McL86} and Barcilon \cite{Bar74-1, Bar74-2}. Barcilon's problem consists in the recovery of the fourth-order differential operator from three spectra. It is analogous to Borg's problem \cite{Borg46} of reconstructing the Sturm-Liouville potential from two spectra.
For convenience, we formulate Barcilon's problem (Inverse Problem~\ref{ip:Bar}) for the boundary conditions \eqref{bc}, while other types of separated boundary conditions also can be considered.

For $(j,k) = (1,2), (1,3), (2,3)$, denote by $\mathfrak S_{jk}$ the spectrum of the boundary value problem for equation \eqref{eqv} with the boundary conditions
\begin{equation} \label{bcBar}
U_j(y) = U_k(y) = 0, \quad V_1(y) = V_2(y) = 0.
\end{equation}

\begin{ip} \label{ip:Bar}
Given the three spectra $\mathfrak S_{12}$, $\mathfrak S_{13}$, $\mathfrak S_{23}$, recover the coefficients $p, q, a, b, c$ of the problem $\mathcal L$.
\end{ip}

In order to establish the connection between Inverse Problems~\ref{ip:main} and~\ref{ip:Bar}, we need several auxiliary lemmas.

\begin{lem} \label{lem:scf}
The spectra $\mathfrak S_{12}$, $\mathfrak S_{13}$, $\mathfrak S_{23}$ coincide with the sets of zeros of the functions $\Delta_{22}(\la)$, $\Delta_{32}(\la)$, $\Delta_{42}(\la)$, respectively.
\end{lem}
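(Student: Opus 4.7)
The plan is to reduce each eigenvalue problem defining $\mathfrak S_{jk}$ to a $2 \times 2$ determinantal condition at $x = 1$ via the fundamental system $\{C_r(x, \la)\}_{r=1}^4$, and then to identify the resulting determinant (or, for the obstructed case, its zero set) with the appropriate $\Delta_{jk}(\la)$.

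Fix $(j, k)$ and set $\{l, m\} = \{1, 2, 3, 4\} \setminus \{j, k\}$. Since $U_s(C_r) = \de_{sr}$, every solution $y$ of \eqref{eqv} with $U_j(y) = U_k(y) = 0$ has the form $y = a_l C_l + a_m C_m$, and the remaining conditions $V_1(y) = V_2(y) = 0$ reduce to a $2 \times 2$ homogeneous linear system for $(a_l, a_m)$ with coefficient determinant
\[
D_{jk}(\la) := C_l(1, \la) C_m'(1, \la) - C_m(1, \la) C_l'(1, \la).
\]
Hence $\mathfrak S_{jk}$ coincides with the zero set of $D_{jk}(\la)$. For $(j, k) = (1, 2)$ we have $\{l, m\} = \{3, 4\}$, and comparison with \eqref{D3} gives $D_{12}(\la) = -\Delta_{22}(\la)$; similarly $D_{13}(\la) = -\Delta_{32}(\la)$ for $(j, k) = (1, 3)$. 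These two cases are therefore immediate.

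The main obstacle is the case $(j, k) = (2, 3)$, because $D_{23}(\la) = C_1(1, \la) C_4'(1, \la) - C_4(1, \la) C_1'(1, \la)$ is built from $C_1, C_4$, whereas $\Delta_{42}(\la)$ in \eqref{D4} is built from $C_2, C_3$; nevertheless their zero sets must coincide. To prove this I would argue by Lagrange brackets. Suppose $\Delta_{42}(\la_0) = 0$ and choose a non-trivial solution $y_0 = a_2 C_2(\cdot, \la_0) + a_3 C_3(\cdot, \la_0)$ satisfying $V_1(y_0) = V_2(y_0) = 0$. Substituting the values $U_s(C_r) = \de_{sr}$ and $U_1(y_0) = U_4(y_0) = 0$ into \eqref{brack0} yields $\langle C_1, y_0\rangle_{|x=0} = \langle C_4, y_0\rangle_{|x=0} = 0$, and \eqref{wron} with $\la = \mu = \la_0$ propagates these zeros to $x = 1$. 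Using also $y_0(1) = y_0'(1) = 0$, the brackets at $x = 1$ collapse to the homogeneous system
\[
C_1'(1, \la_0) y_0''(1) - C_1(1, \la_0) y_0^{[3]}(1) = 0, \qquad C_4'(1, \la_0) y_0''(1) - C_4(1, \la_0) y_0^{[3]}(1) = 0,
\]
whose coefficient determinant equals $D_{23}(\la_0)$. Since $y_0 \not\equiv 0$, uniqueness for the Cauchy problem at $x = 1$ forbids $(y_0''(1), y_0^{[3]}(1)) = (0, 0)$, forcing $D_{23}(\la_0) = 0$. The converse implication is entirely symmetric: start from a non-trivial $\tilde y = \alpha C_1 + \beta C_4$ with $V_1(\tilde y) = V_2(\tilde y) = 0$, evaluate $\langle \tilde y, C_2\rangle$ and $\langle \tilde y, C_3\rangle$ --- both vanish at $x = 0$ by \eqref{brack0} --- and deduce $\Delta_{42}(\la_0) = 0$. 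The crucial Lagrange pairing $\{U_1, U_4\} \leftrightarrow \{U_2, U_3\}$ visible in \eqref{brack0}, which reflects the formal self-adjoint structure of $\mathcal L$, is what makes both implications go through.
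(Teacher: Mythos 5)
Your proof is correct, and its crux is the same as in the paper's proof: the Lagrange pairing $\{U_1,U_4\}\leftrightarrow\{U_2,U_3\}$ visible in \eqref{brack0}, combined with the constancy of the bracket from \eqref{wron} for two solutions at the same $\la$. The execution differs, though. The paper works with the solutions $S_3,S_4$ normalized at $x=1$ by \eqref{initS}: since $\langle S_3,S_4\rangle_{|x=1}=0$, a single application of \eqref{brack0} shows that the characteristic function $d(\la)=U_2(S_3)U_3(S_4)-U_3(S_3)U_2(S_4)$ of $\mathfrak S_{23}$ is \emph{identically equal} to $U_1(S_3)U_4(S_4)-U_4(S_3)U_1(S_4)$, the characteristic function of the problem $U_1(y)=U_4(y)=V_1(y)=V_2(y)=0$, whose zeros are those of $\Delta_{42}$. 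You instead work with the $C_r$'s normalized at $x=0$ and run a two-directional, pointwise argument: propagate the vanishing of $\langle C_1,y_0\rangle$ and $\langle C_4,y_0\rangle$ (respectively $\langle\tilde y,C_2\rangle$, $\langle\tilde y,C_3\rangle$) from $x=0$ to $x=1$ and invoke Cauchy uniqueness at $x=1$ to rule out $(y_0''(1),y_0^{[3]}(1))=(0,0)$. This yields equality of the zero sets only, which is all the lemma asserts, at the cost of a longer argument; the paper's version is both shorter and slightly stronger (an identity of entire functions rather than of zero sets). All the individual steps you give check out, including the signs $D_{12}=-\Delta_{22}$, $D_{13}=-\Delta_{32}$ against \eqref{D3}--\eqref{D4} and the computation of the bracket values at $x=0$ from \eqref{brack0} with $U_s(C_r)=\de_{sr}$.
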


\begin{proof}
For $\mathfrak S_{12}$ and $\mathfrak S_{13}$, the assertion of the lemma readily follows from the comparison of the boundary conditions \eqref{bcjk} and \eqref{bcBar}.

Let us consider $\mathfrak S_{23}$ in more detail. In view of \eqref{initS}, $S_3(x, \la)$ and $S_4(x, \la)$ are two linearly independent solutions of \eqref{eqv} satisfying $V_1(y) = V_2(y) = 0$. Hence, the eigenvalues of $\mathfrak S_{23}$ coincide with the zeros of the characteristic function
$$
d(\la) := U_2(S_3) U_3(S_4) - U_3(S_3) U_2(S_4).
$$
Using \eqref{brack0}, \eqref{wron}, and \eqref{initS}, we derive
$$
0 = \langle S_3, S_4 \rangle_{|x = 1} = \langle S_3, S_4 \rangle_{|x = 0} = U_2(S_3) U_3(S_4) - U_1(S_3) U_4(S_4) + U_4(S_3) U_1(S_4) - U_3(S_3) U_2(S_4).
$$
This immediately implies
$$
d(\la) = U_1(S_3) U_4(S_4) - U_4(S_3) U_1(S_4),
$$
Therefore, $d(\la)$ is the characteristic function of the boundary value problem for equation \eqref{eqv} with the boundary conditions
$$
U_1(y) = U_4(y) = 0, \quad V_1(y) = V_2(y) = 0.
$$
The eigenvalues of the latter boundary value problem coincide with the zeros of $\Delta_{42}(\la)$, which completes the proof.
\end{proof}

\begin{lem} \label{lem:equivB}
Suppose that the eigenvalues $\{ \la_n \}_{n \ge 1}$ of the problem $\mathcal L$ are simple.
Then, the three spectra $\mathfrak S_{12}$, $\mathfrak S_{13}$, $\mathfrak S_{23}$ uniquely determine the data $\{ \la_n, \Delta_{32}(\la_n), \Delta_{42}(\la_n) \}_{n \ge 1}$ and vice versa.
\end{lem}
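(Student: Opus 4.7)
The plan is to invoke Lemma~\ref{lem:scf}, which identifies the three spectra $\mathfrak{S}_{12}$, $\mathfrak{S}_{13}$, $\mathfrak{S}_{23}$ with the zero sets of the entire functions $\Delta_{22}(\la)$, $\Delta_{32}(\la)$, $\Delta_{42}(\la)$, respectively. The claimed equivalence then reduces to showing that these three entire functions can be recovered from $\{\la_n\}_{n\ge 1}$ together with the values $\{\Delta_{32}(\la_n), \Delta_{42}(\la_n)\}_{n\ge 1}$, and vice versa.

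For the forward direction, Proposition~\ref{prop:estD} exhibits each $\Delta_{jk}(\la)$ as an entire function of order $\tfrac14$ with prescribed asymptotic behavior on all sectors $\Gamma_{\xi}$. Hence Hadamard's factorization theorem reconstructs each of $\Delta_{22}$, $\Delta_{32}$, $\Delta_{42}$ from its zero set, with the multiplicative constant fixed by matching the leading-order asymptotics in the manner used in Lemma~\ref{lem:recD} (via \cite[Theorem 1.1.4]{FY01}). The values $\Delta_{32}(\la_n)$ and $\Delta_{42}(\la_n)$ then follow by evaluation. For the converse direction, I first reconstruct $\Delta_{22}(\la)$ from its zero set $\{\la_n\}$ by the same Hadamard argument, which also produces $\dot\Delta_{22}(\la_n)$ for every $n$. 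Since the eigenvalues are simple, $\dot\Delta_{22}(\la_n) \ne 0$, and the representation \eqref{mjk} yields
\begin{equation*}
\be_n = \Res_{\la = \la_n} m_{32}(\la) = -\frac{\Delta_{32}(\la_n)}{\dot\Delta_{22}(\la_n)},
\end{equation*}
so the residues $\{\be_n\}$ are determined by the given data. Lemma~\ref{lem:m32} then reconstructs $m_{32}(\la)$, after which $\Delta_{32}(\la) = -m_{32}(\la)\Delta_{22}(\la)$ is determined, and $\mathfrak{S}_{13}$ is read off as its zero set. The same scheme applied to the residues $\be_n' := -\Delta_{42}(\la_n)/\dot\Delta_{22}(\la_n)$ of $m_{42}(\la)$ recovers $\Delta_{42}(\la)$ and hence $\mathfrak{S}_{23}$.

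The main technical step is justifying a Mittag-Leffler expansion for $m_{42}(\la)$ analogous to \eqref{m32}. This reduces to the decay estimate $|m_{42}(\rho^4)| \le c_{\de}|\rho|^{-2}$ on $G_{\de,22\xi}$, which follows from Proposition~\ref{prop:estD} since a direct computation with \eqref{defp} gives $a_{42} - a_{22} = -2$, together with the asymptotics $\la_n \sim c n^4$ and $\be_n' = O(1)$ from the same sources cited in the proof of Lemma~\ref{lem:m32}. A minor bookkeeping point is that whenever $\la_n$ happens to be a common zero of $\Delta_{22}$ and $\Delta_{32}$ (or $\Delta_{42}$), the corresponding residue simply vanishes and the argument goes through without modification.
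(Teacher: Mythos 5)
Your argument is correct, and the forward direction coincides with the paper's (Lemma~\ref{lem:scf} plus Hadamard factorization with the constants fixed as in Lemma~\ref{lem:recD}). In the converse direction, however, you take a genuinely different, more constructive route than the paper. The paper compares two problems $\mathcal L$ and $\tilde{\mathcal L}$ with the same data, uses the residue identity $m_{j2,\langle -1\rangle}(\la_n) = -\Delta_{j2}(\la_n)/\dot\Delta_{22}(\la_n)$ to show that $(m_{j2}-\tilde m_{j2})$ is entire for $j=3,4$, and then kills these differences by Liouville's theorem via the decay estimates $|\rho|^{-3}$ and $|\rho|^{-2}$ on $G_{\de,22\xi}$; it never writes down a series representation. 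You instead reconstruct $m_{32}$ and $m_{42}$ explicitly by Mittag--Leffler expansions and recover $\Delta_{j2}=-m_{j2}\Delta_{22}$. Both arguments rest on exactly the same two ingredients (the residue formula and Proposition~\ref{prop:estD}), so neither is deeper than the other; yours yields an explicit reconstruction formula, while the paper's avoids having to justify convergence of a new series. One small imprecision on your side: the bound $\be_n'=O(1)$ for the residues of $m_{42}$ does not follow from the sources cited for Lemma~\ref{lem:m32} --- those give $\be_n=O(1)$ for $m_{32}$, whereas the natural estimate for $m_{42}$ (comparing $a_{42}=-2$ with the lower bound $|\dot\Delta_{22}(\la_n)|\gtrsim |\rho_n|^{-3}\exp(\mbox{Re}(\rho_n s_2))$) only gives $\be_n'=O(n)$. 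This is harmless, since $\la_n\sim c n^4$ still makes $\sum_n \be_n'/(\la-\la_n)$ converge absolutely and the contour-integral justification of the expansion uses only the $O(|\mu|^{-1/2})$ decay on the contours, but the claim as stated is an overreach and should be corrected.
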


\begin{proof}
By definition, $\{ \la_n \}_{n \ge 1} = \mathfrak S_{12}$. Let the three spectra $\mathfrak S_{12}$, $\mathfrak S_{13}$, $\mathfrak S_{23}$ be given. It follows from Lemma~\ref{lem:scf} that they uniquely specify the functions $\Delta_{j2}(\la)$ for $j = 2, 3, 4$. Hence, the numbers $\Delta_{j2}(\la_n)$, $j = 3, 4$, $n \ge 1$ can be easily found.

Suppose that, on the contrary, for two problems $\mathcal L$ and $\tilde{\mathcal L}$, the following relations hold: 
\begin{equation} \label{eqsdaux}
\la_n = \tilde \la_n, \quad \Delta_{j2}(\la_n) = \tilde \Delta_{j2}(\la_n), \quad j = 3, 4, \quad n \ge 1.
\end{equation}
Let us show that $\mathfrak S_{k3} = \tilde{\mathfrak S}_{k3}$ for $k = 1, 2$.

Consider the functions $m_{j2}(\la) = -\dfrac{\Delta_{j2}(\la)}{\Delta_{22}(\la)}$, $j = 3, 4$. Obviously, they have simple poles $\{ \la_n \}_{n \ge 1}$ and $m_{j2,\langle -1 \rangle}(\la_n) = -\dfrac{\Delta_{j2}(\la_n)}{\dot \Delta_{22}(\la_n)}$. Hence, the equalities \eqref{eqsdaux} imply $m_{j2,\langle -1 \rangle}(\la_n) = \tilde m_{j2,\langle -1 \rangle}(\la_n)$. Therefore, the functions $(m_{j2} - \tilde m_{j2})$ are entire in $\la$ for $j = 3, 4$. It follows from \eqref{mjk} and Proposition~\ref{prop:estD} that
$$
|(m_{32} - \tilde m_{32})(\rho^4)| \le c_{\de} |\rho|^{-3}, \quad
|(m_{42} - \tilde m_{42})(\rho^4)| \le c_{\de} |\rho|^{-2}, \quad \rho \in G_{\de, 22\xi}, \quad |\rho| \to \infty.
$$
Consequently, by Liouville's Theorem, $m_{j2}(\la) \equiv \tilde m_{j2}(\la)$ for $j=3,4$. This implies $\Delta_{j2}(\la) \equiv \tilde \Delta_{j2}(\la)$, $j = 3, 4$, so the corresponding sets of zeros coincide: $\mathfrak S_{13} = \tilde{\mathfrak S}_{13}$, $\mathfrak S_{23} = \tilde{\mathfrak S}_{23}$.
\end{proof}

\begin{lem} \label{lem:relBM}
Suppose that $\Delta_{33}(\la_n) \ne 0$ for some $n \ge 1$. Then
\begin{equation} \label{relBM}
\Delta_{32}(\la_n) = \dot \Delta_{22}(\la_n) \ga_n^2, \quad
\Delta_{42}(\la_n) = \dot \Delta_{22}(\la_n) \xi_n \ga_n.
\end{equation}
\end{lem}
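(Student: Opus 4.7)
The plan is to translate both equalities into identities about residues of elements of the Weyl--Yurko matrix at $\la_n$, and then compute those residues using the information already available from Lemma~\ref{lem:findbg} and the algebraic relations among the $m_{jk}$.

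First I would observe that since $\la_n$ is a simple eigenvalue of $\mathcal L = \mathcal L_{22}$, the entire function $\Delta_{22}(\la)$ has a simple zero at $\la_n$. Combined with \eqref{mjk}, this gives the residue formulas
$$
m_{32,\langle -1\rangle}(\la_n) = -\frac{\Delta_{32}(\la_n)}{\dot\Delta_{22}(\la_n)}, \qquad m_{42,\langle -1\rangle}(\la_n) = -\frac{\Delta_{42}(\la_n)}{\dot\Delta_{22}(\la_n)}.
$$
So \eqref{relBM} is equivalent to the two identities $m_{32,\langle -1\rangle}(\la_n)=-\ga_n^2$ and $m_{42,\langle -1\rangle}(\la_n)=-\xi_n\ga_n$.

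The first identity is immediate: by definition $\be_n=\Res_{\la=\la_n}m_{32}(\la)=m_{32,\langle -1\rangle}(\la_n)$, and since $\Delta_{33}(\la_n)\ne 0$ we may apply Lemma~\ref{lem:findbg} to conclude $\be_n=-\ga_n^2$, which yields $\Delta_{32}(\la_n)=\dot\Delta_{22}(\la_n)\ga_n^2$.

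For the second identity I would use the symmetry relation \eqref{relm31}, rewritten as $m_{42}(\la)=m_{21}(\la)m_{32}(\la)-m_{31}(\la)$, together with the equality $m_{21}\equiv m_{43}$ from Lemma~\ref{lem:invM}. Under the hypothesis $\Delta_{33}(\la_n)\ne 0$ we have $\Delta_{11}(\la_n)=-\Delta_{33}(\la_n)\ne 0$ by Lemma~\ref{lem:aux1}, so \eqref{mjk} shows that both $m_{31}(\la)$ and $m_{21}(\la)=m_{43}(\la)$ are regular at $\la=\la_n$. Taking residues in the identity for $m_{42}$ and invoking the value $m_{43}(\la_n)=\xi_n/\ga_n$ furnished by Lemma~\ref{lem:findbg}, I obtain
$$
m_{42,\langle -1\rangle}(\la_n) = m_{43}(\la_n)\,m_{32,\langle -1\rangle}(\la_n) = \frac{\xi_n}{\ga_n}\cdot(-\ga_n^2) = -\xi_n\ga_n,
$$
which gives the second formula. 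There is no serious obstacle here; the only point one has to be mildly careful about is to check that the only singular term when passing to residues is the $m_{32}$-factor, which is precisely what Corollary~\ref{cor:sep} and Lemma~\ref{lem:aux1} guarantee in case~(I).
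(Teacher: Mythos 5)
Your proposal is correct and follows essentially the same route as the paper: the first formula comes from $\be_n=m_{32,\langle -1\rangle}(\la_n)=-\ga_n^2$ via Lemma~\ref{lem:findbg}, and the second from taking the residue of \eqref{relm31} at $\la_n$ using $m_{21}\equiv m_{43}$ and $m_{43}(\la_n)=\xi_n/\ga_n$. The only cosmetic difference is that you spell out explicitly why $m_{31}$ and $m_{43}$ are regular at $\la_n$, which the paper leaves implicit.
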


\begin{proof}
First, recall that 
$$
\be_n = m_{32,\langle -1 \rangle}(\la_n) = -\frac{\Delta_{32}(\la_n)}{\dot \Delta_{22}(\la_n)}.
$$
By Lemma~\ref{lem:findbg}, $\be_n = -\ga_n^2$. These two relations imply $\Delta_{32}(\la_n) = \dot \Delta_{22}(\la_n) \ga_n^2$.

Next, consider the relation \eqref{relm31} together with $m_{21}(\la) \equiv m_{43}(\la)$. Taking the residue at $\la = \la_n$, we get
$$
m_{42,\langle -1 \rangle}(\la_n) - m_{43}(\la_n) m_{32,\langle -1 \rangle}(\la_n) = 0.
$$
Using \eqref{mjk} and the relation $m_{43}(\la_n) = \frac{\xi_n}{\ga_n}$, we conclude that $\Delta_{42}(\la_n) = \dot \Delta_{22}(\la_n) \xi_n \ga_n$.
\end{proof}

Lemmas~\ref{lem:scf} and \ref{lem:relBM} imply the following corollary.

\begin{cor} \label{cor:equivBM}
Under the separation condition $(\mathcal S)$, McLaughlin's data $\{ \la_n, \ga_n, \xi_n \}_{n \ge 1}$ uniquely determine the data $\{ \la_n, \Delta_{32}(\la_n), \Delta_{42}(\la_n) \}_{n \ge 1}$ by the formulas \eqref{relBM} and vice versa. Thus, under the condition $(\mathcal S)$, Inverse Problems~\ref{ip:main} and \ref{ip:Bar} are equivalent.
\end{cor}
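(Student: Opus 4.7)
The plan is to reduce the corollary to Lemma~\ref{lem:relBM} by (i) showing that $\dot\Delta_{22}(\la_n)$ is computable from the eigenvalues $\{\la_n\}_{n\ge 1}$ alone, and (ii) using $(\mathcal S)$ to ensure that the formulas \eqref{relBM} can be inverted for $\ga_n$ and $\xi_n$.

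First, I would collect the structural consequences of $(\mathcal S)$. By Lemma~\ref{lem:aux1}, $\Delta_{33}(\la_n) = -\Delta_{11}(\la_n) \ne 0$ for every $n \ge 1$, so Lemma~\ref{lem:relBM} applies at every eigenvalue, and the proof of Lemma~\ref{lem:findbg} already shows $\ga_n \ne 0$ under this hypothesis. Furthermore, simplicity of $\{\la_n\}$ yields $\dot\Delta_{22}(\la_n) \ne 0$. Next, since $\Delta_{22}(\la)$ is an entire function of order $\frac{1}{4}$ whose zero set is exactly $\{\la_n\}_{n\ge 1}$, I would reconstruct it by Hadamard's factorization theorem combined with the asymptotic estimates of Proposition~\ref{prop:estD}, along the lines of the argument in Lemma~\ref{lem:recD}. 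Hence the sequence $\{\dot\Delta_{22}(\la_n)\}_{n\ge 1}$ is determined by $\{\la_n\}$ alone.

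With this preparation, the forward direction $\{\la_n,\ga_n,\xi_n\} \Rightarrow \{\la_n,\Delta_{32}(\la_n),\Delta_{42}(\la_n)\}$ is immediate from \eqref{relBM}. For the converse, I would invert those identities as
$$
\ga_n^2 = \frac{\Delta_{32}(\la_n)}{\dot\Delta_{22}(\la_n)}, \qquad \xi_n \ga_n = \frac{\Delta_{42}(\la_n)}{\dot\Delta_{22}(\la_n)},
$$
to recover $\ga_n$ up to sign and then $\xi_n$ from the second identity. The residual sign ambiguity matches precisely the intrinsic $(\ga_n,\xi_n) \mapsto (-\ga_n,-\xi_n)$ indeterminacy inherent in the normalization $\int_0^1 y_n^2\,dx = 1$, and is already noted to be harmless for all constructions in Sections~\ref{sec:uniq1} and~\ref{sec:uniq2}.

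The concluding equivalence of Inverse Problems~\ref{ip:main} and~\ref{ip:Bar} under $(\mathcal S)$ then follows by chaining the bijection just established with Lemma~\ref{lem:equivB} and invoking the uniqueness delivered by Theorem~\ref{thm:uniq1}. The only genuine obstacle I anticipate is the possible vanishing of $\ga_n$, which would block inversion of the first formula above; it is precisely this degeneracy that $(\mathcal S)$ rules out, by excluding cases (III) and (IV) of \eqref{cases} from occurring. Without $(\mathcal S)$, one would need the full weight-matrix analysis of Section~\ref{sec:uniq2} to handle eigenvalues with $\ga_n = 0$, which explains why the equivalence with Barcilon's three-spectra problem is stated only in the separated case.
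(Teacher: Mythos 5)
Your proposal is correct and follows essentially the same route as the paper, which derives the corollary directly from Lemmas~\ref{lem:scf}/\ref{lem:equivB} and~\ref{lem:relBM}: the formulas \eqref{relBM} give the forward direction, $\dot\Delta_{22}(\la_n)$ is recoverable from $\{\la_n\}$ alone via Hadamard factorization, and $(\mathcal S)$ guarantees $\Delta_{33}(\la_n)\ne 0$ and hence $\ga_n\ne 0$, so the formulas invert (up to the harmless common sign of $\ga_n,\xi_n$). Your added detail on the inversion and the sign ambiguity is a faithful filling-in of what the paper leaves implicit.
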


Applying Theorem~\ref{thm:uniq1} and Corollary~\ref{cor:equivBM}, we arrive at the uniqueness result for Barcilon's problem.

\begin{thm} \label{thm:Bar}
Suppose that the spectrum $\mathfrak S_{12}$ contains only simple eigenvalues and the separation condition $(\mathcal S)$ holds. Then, the three spectra $\mathfrak S_{12}$, $\mathfrak S_{13}$, $\mathfrak S_{23}$ uniquely specify the coefficients $p, q, a, b, c$.
\end{thm}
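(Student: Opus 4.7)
The proof is essentially a chain of reductions to already established results, so the plan is to assemble them in the right order. The strategy is: three spectra $\Rightarrow$ the intermediate data $\{\la_n,\Delta_{32}(\la_n),\Delta_{42}(\la_n)\}_{n\ge 1}$ $\Rightarrow$ McLaughlin's data $\{\la_n,\ga_n,\xi_n\}_{n\ge 1}$ $\Rightarrow$ the coefficients $p,q,a,b,c$.

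First, I would invoke Lemma~\ref{lem:equivB}. By hypothesis $\mathfrak S_{12}=\{\la_n\}_{n\ge 1}$ consists of simple eigenvalues, so that lemma applies and shows that the three spectra $\mathfrak S_{12},\mathfrak S_{13},\mathfrak S_{23}$ uniquely determine the triples $\{\la_n,\Delta_{32}(\la_n),\Delta_{42}(\la_n)\}_{n\ge 1}$.

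Second, I would use the separation condition $(\mathcal S)$. Since $\Delta_{11}\equiv -\Delta_{33}$ (Lemma~\ref{lem:aux1}) and $(\mathcal S)$ forces $\Delta_{33}(\la_n)\ne 0$ for every $n$, Corollary~\ref{cor:sep} places every eigenvalue in case (I) of \eqref{cases}, so in particular $\ga_n\ne 0$. Simplicity of $\la_n$ also gives $\dot\Delta_{22}(\la_n)\ne 0$. Hence the formulas \eqref{relBM} of Lemma~\ref{lem:relBM} are invertible:
\begin{equation*}
\ga_n^2=\frac{\Delta_{32}(\la_n)}{\dot\Delta_{22}(\la_n)},\qquad \xi_n\ga_n=\frac{\Delta_{42}(\la_n)}{\dot\Delta_{22}(\la_n)},
\end{equation*}
which recovers $\ga_n$ up to a sign and then $\xi_n$ uniquely from the chosen sign of $\ga_n$. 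This is exactly the content of Corollary~\ref{cor:equivBM}: under $(\mathcal S)$, the data $\{\la_n,\Delta_{32}(\la_n),\Delta_{42}(\la_n)\}_{n\ge 1}$ and McLaughlin's data $\{\la_n,\ga_n,\xi_n\}_{n\ge 1}$ determine each other.

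Finally, the hypotheses of Theorem~\ref{thm:uniq1} are satisfied (simple $\la_n$ and condition $(\mathcal S)$), so McLaughlin's data uniquely specify $p,q,a,b,c$. The sign ambiguity in $(\ga_n,\xi_n)$ is harmless, as pointed out in the final remark of Section~\ref{sec:uniq2}. The only mild subtlety to check is that the chain works: I expect no real obstacle beyond verifying that $(\mathcal S)$ really does force $\ga_n\ne 0$ (so that division by $\ga_n$ in the expression for $\xi_n$ is legitimate) and that $\dot\Delta_{22}(\la_n)\ne 0$; both follow immediately from Corollary~\ref{cor:sep} and from simplicity of the zeros of $\Delta_{22}$, respectively. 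Concatenating the three steps yields the claim.
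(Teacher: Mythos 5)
Your proposal is correct and follows exactly the paper's route: the paper proves Theorem~\ref{thm:Bar} in one line by concatenating Corollary~\ref{cor:equivBM} (itself built from Lemmas~\ref{lem:scf}, \ref{lem:equivB}, and \ref{lem:relBM}) with Theorem~\ref{thm:uniq1}, which is precisely the chain of reductions you assemble. Your additional checks (that $(\mathcal S)$ forces $\Delta_{33}(\la_n)\ne 0$ and hence $\ga_n\ne 0$, that $\dot\Delta_{22}(\la_n)\ne 0$ by simplicity, and that the sign ambiguity is harmless) are all valid and merely make explicit what the paper leaves implicit.
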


\begin{remark}
A uniqueness theorem for Barcilon's problem for the first time was formulated in \cite{Bar74-1}. But the proof in \cite{Bar74-1} is wrong. Specifically, in Section~4 of \cite{Bar74-1}, Barcilon considered the integrals $I_n$ and $J_n$ over the circular contours $\Gamma_n$ and got the relation $\lim\limits_{n \to \infty} I_n = \lim\limits_{n \to \infty} J_n$. But the rigorous analysis of the asymptotic behavior of integrands in the sectors $\Gamma_{\xi}$ defined in \eqref{defGa} shows that those limits do not exist. 
A correct proof of the uniqueness has been recently obtained by Guan et al \cite{GYB23} for distribution coefficients $p \in W_2^{-1}[0,1]$, $q \in W_2^{-2}[0,1]$. However, the results of \cite{GYB23} are limited to the case of simple poles of the Weyl-Yurko matrix. In Theorem~\ref{thm:Bar}, there is no requirement of the simplicity for the zeros of $\Delta_{11}(\la)$ and $\Delta_{33}(\la)$. The requirement of simplicity for the eigenvalues $\{ \la_n \}_{n \ge 1}$ also can be removed. From this viewpoint, Theorem~\ref{thm:Bar} is more general than the results of \cite{GYB23}.
\end{remark}

\begin{remark}
The separation condition in the uniqueness theorem for Barcilon's problem is crucial. If $(\mathcal S)$ is violated, then the spectral data $\mathfrak S_{12}$, $\mathfrak S_{13}$, $\mathfrak S_{23}$ of Barcilon provide less information than the spectral data $\{ \la_n, \ga_n, \xi_n \}_{n \ge 1}$ of McLaughlin. Indeed, as an example, consider the case (IV) of \eqref{cases}. Due to Lemma~\ref{lem:equivB}, there is a one-to-one correspondence between Barcilon's three spectra and the data $\{ \la_n, \Delta_{32}(\la_n), \Delta_{42}(\la_n) \}_{n \ge 1}$. But, in the case (IV), in view of Lemma~\ref{lem:weight}, we have $\Delta_{32}(\la_n) = \Delta_{42}(\la_n) = 0$. Thus, in Barcilon's problem, we have no additional information for the eigenvalue $\la_n$. However, in McLaughlin's problem, there is the additional information $\xi_n$, which allows us to find the element $n_{41}$ of the weight matrix $\mathcal N(\la_n)$ by Lemma~\ref{lem:cases34}. In Barcilon's problem, we cannot find this element.
\end{remark}

\medskip

{\bf Funding.} This work was supported by Grant 21-71-10001 of the Russian Science Foundation, https://rscf.ru/en/project/21-71-10001/.

\medskip

\noindent Natalia Pavlovna Bondarenko \\

\noindent 1. Department of Mechanics and Mathematics, Saratov State University, \\
Astrakhanskaya 83, Saratov 410012, Russia, \\

\noindent 2. Department of Applied Mathematics and Physics, Samara National Research University, \\
Moskovskoye Shosse 34, Samara 443086, Russia, \\

\noindent 3. S.M. Nikolskii Mathematical Institute, Peoples' Friendship University of Russia (RUDN University), 6 Miklukho-Maklaya Street, Moscow, 117198, Russia, \\

\noindent e-mail: {\it bondarenkonp@info.sgu.ru}


\begin{thebibliography}{99}


\bibitem{McL82}
McLaughlin, J.R. Higher order inverse eigenvalue problems. In: Everitt, W., Sleeman, B. (eds) Ordinary and Partial Differential Equations. Lecture Notes in Mathematics, Vol 964. Springer, Berlin, Heidelberg (1982).

\bibitem{Mar77}
Marchenko, V.A. Sturm-Liouville Operators and Their Applications, Birkh\"auser, Basel (1986).

\bibitem{Lev84}
Levitan, B.M. Inverse Sturm-Liouville Problems, VNU Sci. Press, Utrecht (1987).

\bibitem{PT87}
P\"{o}schel, J.; Trubowitz, E. Inverse Spectral Theory, Academic Press, New York (1987).

\bibitem{FY01}
Freiling, G.; Yurko, V. Inverse Sturm-Liouville Problems and Their Applications, Nova Science Publishers, Huntington, NY (2001).

\bibitem{GL51}
Gel'fand, I.M.; Levitan, B.M. On the determination of a differential equation from its spectral function, Izv. Akad. Nauk SSSR, Ser. Mat. 15 (1951), 309-360 [in Russian].

\bibitem{McL86}
McLaughlin, J.R. Analytical methods for recovering coefficients in differential equations from spectral data, SIAM Review 28 (1986), no.~1, 53--72.

\bibitem{McL84}
McLaughlin, J.R. Bounds for constructed solutions of second and fourth order inverse eigenvalue problems, North-Holland Math. Stud. 92 (1984), 437--443.


\bibitem{Sakh61}
Sakhnovich, L.A. Transformation operator method for higher-order equations, Math. Sbornik, N. Ser. 55(97) (1961), no. 3, 347--360 [in Russian].

\bibitem{Sakh62}
Sakhnovich, L.A. On the inverse problem for fourth-order equations, Mat. Sbornik, N. Ser. 56(98) (1962), no.~2, 137--146 [in Russian].

\bibitem{Khach83}
Khachatryan, I.G. Certain inverse problems for differential operators of higher orders on the semiaxis, Funct. Anal. Appl. 17 (1983), no. 1, 30--40.

\bibitem{Mal82}
Malamud, M.M. Necessary conditions for the existence of a transformation operator for higher-order equations, Funct. Anal. Appl. 16 (1982), no.~3, 219--221.

\bibitem{Mal90}
Malamud, M.M. On the question of transformation operators for ordinary differential equations, Trans. Mosc. Math. Soc. 1991 (1991), 69--99.


\bibitem{Yur89}
Yurko, V.A. Reconstruction of higher-order differential operators, Differ. Equ. 25 (1989), no.~9, 1082--1091.

\bibitem{Yur00}
Yurko, V.A. Inverse problems of spectral analysis for differential operators and their applications, J. Math. Sci. 98 (2000), no.~3, 319--426.


\bibitem{Glad86}
Gladwell, G.M.L. The inverse problem for the Euler-Bernoulli beam, Proc. R. Soc. Lond. A 407 (1986), 199--218.

\bibitem{Glad05}
Gladwell, G.M.L. Inverse Problems in Vibration, Second Edition, Solid Mechanics and Its Applications, Vol. 119, Springer, Dordrecht (2005).


\bibitem{Bar74-1}
Barcilon, V. On the uniqueness of inverse eigenvalue problems, Geoph. J. Intern. 38 (1974), no.~2, 287--298.

\bibitem{Bar74-2}
Barcilon, V. On the solution of inverse eigenvalue problems of high orders, Geoph. J. Intern. 39 (1974), no.~1, 143--154.

\bibitem{Iw88}
Iwasaki, K. Scattering theory for 4th order differential operators. II, Japan. J. Math.
(N.S.) 14 (1988), 59--96.

\bibitem{PK97}
Papanicolaou, V.G.; Kravvaritisz, D. An inverse spectral problem for the Euler-Bernoulli equation for the vibrating beam, Inverse Problems 13 (1997), 1083--1092.

\bibitem{CPS98}
Caudill, L.F.; Perry, P.A.; Schueller, A.W. Isospectral sets for fourth-order ordinary differential operators, SIAM J. Math. Anal. 29 (1998), no.~4, 935--966.

\bibitem{Pap04}
Papanicolaou, V.G.	An inverse spectral result for the periodic Euler-Bernoulli equation, Indiana Univ. Math. J. 53 (2004), no.~1, 223--242.

\bibitem{Mor15}
Morassi, A. Exact construction of beams with a finite
number of given natural frequencies, J. Vibr. Cont.
Vol. 21 (2015), no.~3, 591--600.

\bibitem{BK15}
Badanin, A.; Korotyaev, E. Inverse problems and sharp eigenvalue asymptotics for Euler-Bernoulli operators, Inverse Problems 31 (2015), 055004.

\bibitem{AZ19}
Ao, J.-J.; Zhang, L. Inverse spectral problem of fourth-order boundary value problems with finite spectrum, Math. Meth. Appl. Sci. 42 (2019), no.~13, 4472--4479.

\bibitem{PB20}
Perera, U.; B\"ockmann, C. Solutions of Sturm-Liouville problems, Mathematics 8 (2020), no.~11, 1--14, 2074.

\bibitem{JLX22}
Jiang, X.; Li, X.; Xu, X. Numerical algorithms for inverse Sturm-Liouville problems, Numerical Algorithms 89 (2022), 1287--1309.


\bibitem{Mikh64}
Mikhlin, S.G. Variational Methods in Mathematical Physics, Nauka, Moscow, 1964; Pergamon, Oxford, 1964.

\bibitem{YS72}
Yakubovich, V.A.; Starzhinskii, V.M. Linear Differential Equations with Periodic Coeffcients and Their Applications, Nauka, Moscow, 1972.

\bibitem{Pol20}
Polyakov, D. M. Spectral estimates for the fourth-order operator with matrix coefficients, Comp. Math. Math. Phys. 60 (2020), no.~7, 1163--1184.


\bibitem{Yur02}
Yurko, V.A. Method of Spectral Mappings in the Inverse Problem Theory, Inverse and Ill-Posed Problems Series, Utrecht, VNU Science (2002).

\bibitem{Yur92}
Yurko, V.A. Recovery of nonselfadjoint differential operators on the half-line from the Weyl matrix, Sb. Math. 72 (1992), no.~2, 413--438. 

\bibitem{Yur93}
Yurko, V.A. On higher-order differential operators with a singular point, Inverse Problems 9 (1993), no.~4, 495--502.

\bibitem{Yur95}
Yurko, V.A. On higher-order differential operators with a regular singularity, Sb. Math. 186 (1995), no.~6, 901--928.

\bibitem{Bond21}
Bondarenko, N.P. Inverse spectral problems for arbitrary-order differential operators with distribution coefficients, Mathematics 9 (2021), no. 22, Article ID 2989.

\bibitem{Bond22-alg}
Bondarenko, N.P. Reconstruction of higher-order differential operators by their spectral data, Mathematics 10 (2022), no. 20, Article ID 3882.

\bibitem{Bond23-mmas}
Bondarenko, N.P. Linear differential operators with distribution coefficients of various singularity orders, Math. Meth. Appl. Sci. 46 (2023), no. 6, 6639-6659.

\bibitem{Bond23-reg}
Bondarenko, N.P. Regularization and inverse spectral problems for differential operators with distribution coefficients, Mathematics 11 (2023), no.~16, Article ID 3455.

\bibitem{MS16}
Mirzoev, K.A.; Shkalikov, A.A. Differential operators of even order with distribution coefficients, Math. Notes 99 (2016), no.~5, 779--784.

\bibitem{Leib66}
Leibenson, Z.L. The inverse problem of spectral analysis for higher-order ordinary differential operators, Trans. Moscow Math. Soc. 15 (1966), 78-163.

\bibitem{Bond22-asympt}
Bondarenko, N.P. Spectral data asymptotics for the higher-order differential operators with distribution coefficients, J. Math. Sci. 266 (2022), no. 5, 794--815.

\bibitem{BFY14}
Buterin, S.A.; Freiling, G.; Yurko, V.A. Lectures on the theory of entire functions, Diusburg-Essen University, Schriftenreihe der Fakult\"at f\"ur Mathematik, SM-UDE-779 (2014), 37 pp.


\bibitem{But07}
Buterin, S.A. On inverse spectral problem for non-selfadjoint Sturm-Liouville operator on a finite interval, J. Math. Anal. Appl. 335 (2007), no. 1, 739--749.

\bibitem{BSY13}
Buterin, S.A.; Shieh, C.-T.; Yurko, V.A. Inverse spectral problems for non-selfadjoint second-order differential operators with Dirichlet boundary conditions, Boundary Value Problems (2013), 2013:180.

\bibitem{Borg46}
Borg, G. Eine Umkehrung der Sturm-Liouvilleschen Eigenwertaufgabe: Bestimmung der Differentialgleichung durch die Eigenwerte, Acta Math. 78 (1946), 1--96 [in German].

\bibitem{GYB23}
Guan, A.-W.; Yang, C.-F.; Bondarenko, N.P. Solving Barcilon's inverse problems for the method of spectral mappings, arXiv:2304.05747 [math.SP].

\end{thebibliography}
\end{document}